\theoremstyle{definition}
\DeclareMathOperator*{\argmax}{arg\,max}
\DeclareMathOperator*{\argsup}{arg\,sup}
\newtheorem{definition}{Definition}
\newtheorem{assumption}{Assumption}
\newtheorem{remark}{Remark}
\newtheorem{theorem}{Theorem}
\newtheorem{lemma}{Lemma}
\newtheorem{corollary}{Corollary}
\begin{document}
\title{Active Nonparametric Two-Sample Testing by Betting on Heterogeneous Data Sources} 

 \author{%
  \IEEEauthorblockN{Chia-Yu Hsu and Shubhanshu Shekhar}
 \IEEEauthorblockA{Department of Electrical Engineering and Computer Science\\
                    University of Michigan, 
                    Ann Arbor\\
                    Email: \{chiayuh, shubhan\}@umich.edu} }


\maketitle

\begin{abstract}
    We study the problem of active nonparametric sequential two-sample testing over multiple heterogeneous data sources. In each time slot, a decision-maker adaptively selects one of $K$ data sources and receives a paired sample generated from that source for testing. The goal is to decide as quickly as possible whether the pairs are generated from the same distribution or not. 
    The gain achieved by such adaptive sampling (in terms of smaller expected stopping time or larger error exponents) has been well-characterized for parametric models via Chernoff’s adaptive MLE selection rule \cite{Chernoff_59}. 
    However, analogous results are not known for the case of nonparametric problems, such as two-sample testing, where we place no restrictions on the distributions.   

    Our main contribution is a general active nonparametric testing procedure that combines an adaptive source-selecting strategy within the \emph{testing-by-betting} framework of~\cite{shekhar2025nonparametrictwosampletestingbetting} that works under minimal distributional assumptions.  
    In each time slot, our scheme proceeds by selecting a source according to a probability that mixes exploitation, favoring sources with the largest empirical distinguishability, and exploration via a vanishing greedy strategy. 
    The (paired) observations so collected are then used to update the ``betting-wealth process'', which is a stochastic process guaranteed to be a nonnegative martingale under the null. The procedure stops and rejects the null when the wealth process exceeds an appropriate threshold; an event that is unlikely under the null. We show that our test controls the type-I error at a prespecified level-$\alpha$ under the null, and establish its power-one property and a bound on its expected sample size under the alternative. Our results provide a precise characterization of the improvements achievable by a principled adaptive sampling strategy over its passive analog. 
    
     
\end{abstract}

\section{Introduction}
Heterogeneous data sources in sequential decision-making problems have received considerable attention, since access to multiple sources allows a decision-maker to reach a decision faster by adaptively sampling the most informative source. In the context of sequential hypothesis testing, Chernoff \cite{Chernoff_59} proposed the first adaptive selection strategy, which uses a maximum likelihood estimate (MLE) based on accumulated log-likelihood ratios to guide the choice of data source at each time slot, successfully demonstrating that the gain of adaptivity (so-called the gain of heterogeneous data sources in this paper) is a lower expected stopping time in an asymptotic regime with the same reliability. Following this work, many subsequent papers \cite{Hsu24,LanWang_21,NaghshvarJavidi_13,BaiGupta_17,BaiKatewa_15,Vaidhiyan2015,Prabhu2020} have extended Chernoff’s action-selection principle to various scenarios, including settings with expected budget constraints and temporally available data sources.

However, all these methods are ``likelihood-based'', and rely strongly on either the knowledge of the data distributions, or assume that the stochastic model is parametric. Such conditions are often violated in many practical real-world scenarios. The main motivation of our work is to initiate the study of the active sequential hypothesis testing problem in more general settings. 
As a first step in this direction, we base on \emph{sequential nonparametric two-sample testing} (SN2T), where the goal is to test whether two unknown data distributions are equal based only on sequentially collected paired samples without any parametric model assumptions. This setting naturally arises in applications where the underlying distributions are often complex and not well captured by simple parametric models. Prior works on SN2T \cite{shekhar2025nonparametrictwosampletestingbetting,Lheritier2018} have only developed sequential tests in the case of a \emph{single} data source (in this paper, one data source has a pair of distributions). To investigate the gain of adaptivity, we extend this single-source formulation to multiple data sources and a corresponding global null (and alternative).

Our active generalization of the sequential two-sample testing problem models many real-world problems. For example, consider an online A/B test on a large platform. Users are divided into several segments, such as different regions or device types. For each segment, the platform can observe how users interact under the current recommendation policy (version A) and under a new policy (version B), but the platform can only select one segment to get the observation per time slot due to some constraints, e.g., cost or bandwidth. The major question of this extension is \emph{how to exploit source heterogeneity to have the gain of adaptivity}.

Before proceeding to our results, we first briefly introduce our problem framework for the \emph{active} SN2T, so-called ASN2T in this paper. We assume that there are $K$ data sources and any data source, say source $k \in [K]$, consists of a pair of distributions $P_{k,1}$ and $P_{k,2}$, which are unknown. The global null hypothesis is that for any data source, its two distributions are the same. Then, in each time slot, the decision-maker can (adaptively) select one data source and collect paired samples drawn from the corresponding paired distributions. If the global null is false, it should stop and reject the null as soon as possible; if the global null is true, it wants to control the false alarm probability. Recall the online A/B test example, $P_{k,1}$ and $P_{k,2}$ may represent user-interaction distributions under two recommendation policies for segment $k$. This problem structure generalizes the single-source setting in the current literature on SN2T, where they consider $K=1$.

Our main result is that, following the “sequential tests of power one’’ criterion introduced by \cite{Darling1968}, we design an adaptive source-selection policy combined with the general framework of two-sample testing by betting \cite{shekhar2025nonparametrictwosampletestingbetting}. The resulting procedure (including the stopping rule and the adaptive selection rule) has the following three properties. First, it ensures the level-$\alpha$ property, i.e., under the global null, the probability of ever stopping is at most $\alpha$. Second, it has the power-one property, i.e., if the global null is false (when at least one data source has different distributions for its two samples, so-called global alternative), the probability of stopping in finite time is one. Third, under some additional conditions, we derive a nonasymptotic upper bound on the expected stopping time under the global alternative, showing that its order is at least as good as that in the single-source setting studied by \cite{shekhar2025nonparametrictwosampletestingbetting}. Moreover, when the global alternative holds and there exists a data source that has a better “distinguishability’’ than those of all other sources, our adaptive selection strategy concentrates sampling on this most informative source through a vanishing greedy exploration, leading to an asymptotically smaller expected stopping time than the single-source setting.

For the stopping rule, we follow the method of \cite{shekhar2025nonparametrictwosampletestingbetting}, which adapts an adaptive betting strategy (to make the betting-wealth process grow as fast as possible under the alternative) with an adaptive prediction strategy (to measure the empirical “distance’’ between distributions), providing a computationally and statistically powerful two-sample test. We show that, even with adaptive source selection, the associated betting-wealth process remains a nonnegative supermartingale, so Ville’s inequality \cite{ville1928} yields the level-$\alpha$ property. In addition, by applying the martingale strong law of large numbers, we can guarantee the power-one property under mild assumptions. The more interesting part is in the design of the selection rule. While our setting is similar to sequential hypothesis testing in multi-armed bandits \cite{Prabhu2020} and heterogeneous population models \cite{Adam2023}, these results cannot be applied to our problem. In our case, the “reward’’ induced by the adaptive prediction strategy is time-varying and non-i.i.d.. Moreover, we do not have parametric assumptions on the data distributions, and the decision-maker can sample only one data source at each time slot rather than all sources simultaneously. Building on the ideas of \cite{Hsu24}, we propose a simple forced exploration strategy with a vanishing exploration rate~(denoted by $\epsilon(t)$) as the time $t$ goes to infinity. The guidance of the selection is the \emph{empirical distance} induced by the prediction strategy sequentially updated for each data source. Specifically, at each time $t\ge1$, with probability $1-\epsilon(t)$, we select the source with the largest empirical distance at time $t-1$, and with the probability $\epsilon(t)$, we sample a source uniformly at random. By carefully choosing the decay of $\epsilon(t)$ (related to the gap between the best distinguishability and the second best distinguishability, so-called sub-optimality gap), we balance exploration and exploitation and satisfy the third property mentioned earlier. 

To the best of our knowledge, this is the first work to study active source selection in sequential nonparametric hypothesis testing, with a particular focus on two-sample testing. 

\section{Problem Formulation}\label{sec:problem_form}
Throughout, we assume that there are $K<\infty$ data sources, and we use the notation $[K]$ to represent the set $\left\{1,2,\ldots,K\right\}$.
\subsection{Statistical model and hypotheses}
For each data source $k\in[K]$, there are two distributions $P_{k,1}$ and $P_{k,2}$ (probably in a nonparametric distribution class). If a data source $k\in[K]$ is selected, the decision-maker receives $(X_{k,1}, X_{k,2})$ drawn from  the product distribution $P_{k,1} \times P_{k,2}$. Moreover, both $X_{k,1}$ and $X_{k,2}$ lie on a common alphabet $\mathcal{X}$~(which for simplicity we assume is the same across all $k \in [K]$).

For the hypotheses, there are global null $H_0$, which means for any data source $k\in[K]$, their corresponding paired distribution $P_{k,1}$ and $P_{k,2}$ are the same, and the global alternative $H_1$, which means there exists at least one $k\in[K]$, such that their corresponding paired distributions are different. That is 
\begin{align}
    &H_0:\forall\,k\in[K]\quad P_{k,1}=P_{k,2},\\
    &H_1:\exists k\in[K]\quad P_{k,1}\neq P_{k,2}.
\end{align}
For all $k\in[K]$, $P_{k,1}$ and $P_{k,2}$ are unknown to the decision-maker, and we make no further assumptions. We respectively denote $\mathbb{P}_{H_i}(\cdot)$ and $\mathbb{E}_{H_i}[\cdot]$ as the “probability’’ and the “expectation’’ under $H_i$ for $i\in\left\{0,1\right\}$.

\subsection{Metric for distance measure}
In this paper, we utilize a distance metric on the space of probability distributions utilized by \cite{shekhar2025nonparametrictwosampletestingbetting}. In particular, let $\mathcal{G}$ denote a class of test functions $g:\mathcal{X}\rightarrow[-1/2, 1/2]$. For any $k \in [K]$, we can use $\mathcal{G}$ to define a distance denoted by $D^{(k)}_{\mathcal{G}}$ between two distributions under $H_1$:
\begin{align}
    D^{(k)}_{\mathcal{G}}:=\sup_{g_k\in\mathcal{G}}\mathbb{E}_{H_1}[g_k(X_{k,1})-g_k(X_{k,2})]\quad\forall\,k\in[K],
\end{align}
and the corresponding maximizer is 
\begin{align}
    g^*_k:=\argsup_{g_k\in\mathcal{G}}\mathbb{E}_{H_1}[g_k(X_{k,1})-g_k(X_{k,2})]\quad\forall\,k\in[K],\label{eq:optimal_pred}
\end{align}
which is the so-called \emph{witness function}.
Different choices of the function class $\mathcal{G}$ lead to different distance measures~(or metrics), such as the $L_2$-norm or the kernel MMD metric.
Moreover, let $D^{(a^*)}_{\mathcal{G}}:=\max_{k\in[K]}D^{(k)}_{\mathcal{G}}$ and $a^*$ is the data source that attains $D^{(a^*)}_{\mathcal{G}}$. Then, the sub-optimality gap is defined as 
\begin{align}
    r:=D^{(a^*)}_{\mathcal{G}}-\max_{k\neq a^*}D^{(k)}_{\mathcal{G}}.
\end{align}
We assume the decision-maker knows a lower bound of $r$, defined as $L>0$ for the alternative hypothesis (see Remark~\ref{remark:known_lower_bound_of_gap}). To make our problem meaningful, we will work under the following assumptions on the function class $\mathcal{G}$.
\begin{assumption}[Positive sub-optimality gap]\label{a:positive_subopt_gap}
    $r>0$.
\end{assumption} 
\begin{assumption}[Global Distinguishability]\label{a:distinguishability}
    There exists $k\in[K]$ such that $D^{(k)}_{\mathcal{G}}>0$.
\end{assumption}
\begin{assumption}\label{a:symmetric_G}
    If $g\in\mathcal{G}$, then $-g\in\mathcal{G}$ too.
\end{assumption}
This last assumption is primarily to simplify the presentation, and we can relax this following the same ``hedging'' argument as discussed by~\cite{shekhar2025nonparametrictwosampletestingbetting}.

\subsection{Source-selection, stopping time, and performance measure}
At time slot $t\ge1$, the decision-maker selects a source $\delta_t\in[K]$ according to a (possibly randomized) selection policy, which can be fully adaptive. In other words, $\delta_t$ is a random variable whose distribution is determined by all information observed up to time $t-1$, including past actions, collected samples, or any functions or statistics that have been updated from these samples.

After taking the action, if $\delta_t = k$ for any source $k\in[K]$, the decision-maker receives a pair of observations $X_{k,1}(t)$ and $X_{k,2}(t)$ generated by $ P_{k,1}$ and $P_{k,2}$, respectively. That is, for any $t\ge1$,
\begin{align}
    X_{k,1}(t)\sim P_{k,1},\quad X_{k,2}(t)\sim P_{k,2}\quad\text{if }\delta_t=k\in[K].
\end{align}
We assume that, for the selected source $k$ at time $t$, the samples $X_{k,1}(t)$ and $X_{k,2}(t)$ are independent of each other. In addition, for each $t\ge1$ and given $\delta_t=k$, the pair $(X_{k,1}(t), X_{k,2}(t))$ is conditionally independent of all past information.

At the end of each time slot, the decision-maker uses all paired samples collected so far to decide whether to stop and reject $H_0$ or to continue sampling. The corresponding stopping time is denoted by $\tau$.

The procedure of the action-taking and stopping time is summarized below.
\begin{definition}[Active sequential two-sample test]\label{def:ASN2T}
    The procedure of the test is as follows: 
    \begin{itemize}
        \item At each time slot $t\ge1$, the distribution of the action $\delta_t$ is a function of the natural filtration $F_{t-1}$, where $F_{t-1} \coloneqq \sigma\left\{(X_{\delta_s,1}(s),X_{\delta_s,2}(s), \delta_s)_{s=1}^{t-1}\right\}$.
        \item The  event $\left\{\tau=t\right\}$ is $F_{t}$-measurable for all $t\ge1$.
    \end{itemize}
\end{definition}

Then, for the performance guarantee, we say that an active sequential two-sample test (as in Definition~\ref{def:ASN2T}) is a level-$\alpha$ power-one test if the probability of ever stopping under the null hypothesis is at most $\alpha$, and the probability of ever stopping under the alternative is equal to one. In short, a level-$\alpha$ power-one sequential test should satisfy
\begin{align}
    \mathbb{P}_{H_0}(\tau<\infty)\leq\alpha\quad\text{and}\quad\mathbb{P}_{H_1}(\tau<\infty)=1,
\end{align}
where $P_{H_i}$ means that the samples are drawn from distributions under hypothesis $H_i$. Furthermore, the expected stopping time is defined under $H_1$, i.e., we denote it as $\mathbb{E}_{H_1}[\tau]$.

Our goal is to construct a level-$\alpha$ power-one active sequential two-sample test in a fully nonparametric setting, and derive an upper bound on the expected stopping time, in the process precisely characterizing the gain of adaptive sampling.

In the following sections, to understand our proposed test, we begin by introducing an oracle test in our problem setting, following the idea in \cite{shekhar2025nonparametrictwosampletestingbetting}. Then, since the oracle test depends on the knowledge of data distributions, we propose our practical test, including the stopping and the selection rule. The stopping rule basically follows the testing-by-betting method proposed by \cite{shekhar2025nonparametrictwosampletestingbetting}, and the selection rule is based on a vanishing greedy mechanism. We then demonstrate our main theorem. Finally, we compare our expected stopping time with the passive setting in \cite{shekhar2025nonparametrictwosampletestingbetting} and identify conditions for the gain of adaptivity.

\begin{remark}[Knowledge of the lower bound of the sub-optimality gap]\label{remark:known_lower_bound_of_gap}
    Our policy uses a (vanishing) greedy exploration scheme to learn the distinguishability of each data source. When the sub-optimality gap is small, increased exploration is necessary to reliably identify the most informative source. While one may consider bandit exploration rules such as UCB \cite{lattimore2020}, standard stochastic-bandit methods do not directly apply here because our “reward’’ is time-varying, induced by the adaptive prediction strategy. 
    Finally, greedy is also easier to generalize to additional constraints, such as expected budget constraints or time-varying availability of data sources \cite{Hsu24}.
\end{remark}

\section{Oracle test}\label{sec:oracle_test}
In this section, we describe an ``oracle test'' that relies on the knowledge of the true (and unknown) distributions. While this test is clearly not practical, it serves as a template for the design of our practical test in the next section. 
In parametric settings, such as that considered by~\cite{Chernoff_59, NaghshvarJavidi_13, Prabhu2020}, the optimal approach is to design tests based on cumulative log-likelihood ratio processes. 
However, in our setting, the statistical model is nonparametric and unknown; hence, we follow the oracle test proposed by \cite{shekhar2025nonparametrictwosampletestingbetting} but with heterogeneous data sources.

The oracle test is associated with the optimal sampling policy with  $\delta_t=a^*$ for all $t\ge1$, and accumulates the difference $g^*_{a^*}(X_{a^*,1}(t))-g^*_{a^*}(X_{a^*,2}(t))$ over time slot $t\ge1$ via a log-optimal betting process. Specifically, we denote the oracle betting-wealth process as $\left\{W^*_t\right\}_{t\ge0}$, with $W^*_0=1$, and
\begin{align}
    &W^*_{t}=W^*_{t-1}\cdot(1+\lambda^*v^{(a^*)}_t(g^*_{a^*})), \; \forall \; t \geq 1,
\end{align}
where $\forall\,t\ge1$ and $g\in\mathcal{G}$, we use the notation
\begin{align}
    &v^{(k)}_t(g):=g(X_{k,1}(t))-g(X_{k,2}(t))\quad\forall\,k\in[K],\label{eq:v}\\
    &\lambda^*\in\argmax_{\lambda\in[-1,1]} \mathbb{E}\left[\log\left(1+\lambda(g^*_{a^*}(X_{a^*,1}) - g^*_{a^*}(X_{a^*,2})\right)\right],\nonumber\\
    &\text{with}\ (X_{a^*,1}, X_{a^*,2})\sim P_{X_{a^*,1}}\times P_{X_{a^*,2}}\nonumber.
\end{align}
We can then define the oracle stopping time as $\tau^*:=\inf\left\{t\ge1:W^*_t\geq1/\alpha\right\}$.
The term $\lambda^*$ is the optimal constant bet that ensures $\left\{W^*_t\right\}_{t\ge1}$ to (exponentially) grows with the fastest rate under $H_1$, leading to the smallest expected stopping time. Moreover, $g^*_{a^*}$ can be interpreted as the function that best distinguishes $P_{a^*,1}$ and $P_{a^*,2}$, and the per-time difference $v^{(a^*)}_t(g^*_{a^*})$ is from the most informative data source for the decision-maker under the alternative, implicitly showing the possibility of the gain of heterogeneous data sources. Also, it is easy to show that the oracle test $\tau^*$ satisfies the level-$\alpha$ power-one property by the argument in \cite{shekhar2025nonparametrictwosampletestingbetting}. We omit this discussion here.

However, since $\left\{g^*_k\right\}_{k\in[K]}$, $\lambda^*$, and $a^*$ require the knowledge of $\left\{(P_{k,1},P_{k,2})\right\}_{k\in[K]}$, this oracle betting-wealth process is not practical. Therefore, following \cite{shekhar2025nonparametrictwosampletestingbetting}, we utilize a data-driven method consisting of a \emph{prediction strategy} and a \emph{betting strategy} to approximate $\left\{g^*_k\right\}_{k\in[K]}$ and $\lambda^*$ by two $F_{t-1}$-measurable sequences, $\left\{g_{k,t}\right\}_{t\ge1}$ for each $k\in[K]$ and $\left\{\lambda_t\right\}_{t\ge1}$. For the selection strategy, we propose a data-driven randomized policy that balances \emph{exploration} and \emph{exploitation} well, which ensures the decision-maker can select the most informative data source $a^*$ most of the time. We discuss the details of our approach in the next section. 

\section{Main Results}\label{sec:main_results}
\subsection{Our proposed test}
For each $k\in[K]$, the update strategy of $\left\{g_{k,t}\right\}_{t\ge1}$ is denoted by $\mathcal{A}^{(k)}_{\text{pred}} := \left\{A^{(k)}_{\text{pred},t} : t \ge 1\right\}$, where each $A^{(k)}_{\text{pred},t}$ maps the past information $F_{t-1}$ to a function $g_{k,t} \in \mathcal{G}$. Similarly, the update strategy of $\left\{\lambda_t\right\}_{t\ge1}$ is denoted by $\mathcal{A}_\text{bet} := \left\{A_{\text{bet},t} : t \ge 1\right\}$, where each $A_{\text{bet},t}$ maps $F_{t-1}$ to a bet $\lambda_t \in [-1,1]$. Then, we follow similar updates as \cite{shekhar2025nonparametrictwosampletestingbetting} for these two strategies (see Remark~\ref{remark:update_of_the_betting}). More importantly, since the prediction strategy depends on the function class $\mathcal{G}$, the quality of the prediction function of each data source is represented by the \emph{individual} regret $R^{(k)}_t(\mathcal{A}^{(k)}_{\text{pred}})$ for each $k\in[K]$ and $t\ge1$, defined as 
\begin{align}
    R^{(k)}_t\equiv R^{(k)}_t(\mathcal{A}^{(k)}_{\text{pred}})\coloneqq &\sup_{g_k\in\mathcal{G}} \bigg( \sum_{s=1}^{t}v^{(k)}_s(g_{k})\mathbbm{1}\left\{\delta_s=k\right\} \nonumber\\
    &- \sum_{s=1}^{t}v^{(k)}_s(g_{k,s})\mathbbm{1}\left\{\delta_s=k\right\} \bigg),\label{eq:individual_regret}
\end{align}
where $\mathbbm{1}\left\{\cdot\right\}$ is the indicator function. In this paper, $R^{(k)}_t$ is equivalent to $R^{(k)}_t(\mathcal{A}^{(k)}_{\text{pred}})$ for notational simplicity.
This definition of regret extends the single-source setting in \cite{shekhar2025nonparametrictwosampletestingbetting}, where $K=1$. We will show the role of individual regret in our main theorem.

For our selection policy, after replacing $g^*_k$ by $\left\{g_{k,t}\right\}_{t\ge1}$ for each $k\in[K]$, we define the selection rule in terms of the resulting empirical distances over received samples. At time slot $t\ge1$, if source $\delta_t\in[K]$ is selected, the empirical distance in that slot is
\[
    v^{(\delta_t)}_{t}(g_{\delta_t,t})
    = g_{\delta_t,t}\bigl(X_{\delta_t,1}(t)\bigr)
      - g_{\delta_t,t}\bigl(X_{\delta_t,2}(t)\bigr).
\]
Based on these quantities, the distribution of $\delta_t$ is chosen according to a vanishing $\epsilon(t)$-greedy policy. In words, at each time slot $t\ge1$, with probability $1-\epsilon(t)$ the policy exploits by selecting the data source whose running average of the empirical distances is largest until time $t-1$, and with probability $\epsilon(t)$ it explores by selecting a source uniformly at random from $[K]$. The exploration rate satisfies $\epsilon(t)\in\Theta(1/t)$, which ensures that each data source is selected $\Omega(\log(t))$ times. This frequency is sufficient to learn the running average of the empirical distances of $a^*$ is larger than others. Hence, as $t$ is large enough, the probability of selecting $\delta_t=a^*$ approaches one. In addition, we choose the constant in $\epsilon(t)$ to be proportional to $ K/L^2$.
Consequently, when $L$ is small or $K$ is large, identifying $a^*$ is more difficult; hence, the policy naturally allocates more probability to exploration.

In short, our proposed test is summarized as follows: The data-driven betting-wealth process and our stopping time are defined as $\left\{W_t\right\}_{t\ge0}$ and $\tau^{(\text{ASN2T})}$. Set $W_0=1$ and $C\in O\left(\frac{K}{L^2}\right)$. Then, at each time slot $t\ge 1$,
\begin{align}
    &\epsilon(t)=\min\left\{1,C/t\right\},\\
    &\delta_t
    \begin{cases}
        =\argmax_{k\in[K]}\hat{\mu}_{k,{t-1}}\quad\text{with probability }1-\epsilon(t),\\
        \sim\text{Unif}([K])\quad\text{with probability }\epsilon(t),
    \end{cases}\\
    &W_{t}=W_{t-1}\cdot(1+\lambda_t v^{(\delta_t)}_t(g_{\delta_t,t})),
\end{align}
and $\tau^{(\text{ASN2T})}:=\inf\left\{t\ge1:W_t\geq1/\alpha\right\}$,
where the running average $\hat{\mu}_{k,t}$ is defined as, $\forall\,k\in[K]$ and $t\ge1$,
\begin{align}
    &\hat{\mu}_{k,t}:=\frac{1}{N_{k,t}}\sum_{s=1}^{t}v^{(k)}_s(g_{k,s})\mathbbm{1}\left\{\delta_s=k\right\},\\
    &N_{k,t}:=\sum_{s=1}^{t}\mathbbm{1}\left\{\delta_s=k\right\},\quad\hat{\mu}_{k,t}:=-\infty\,\text{if }N_{k,t}=0.
\end{align}
The exact constant $C$ is defined in  Appendix~\ref{a:proposed_test}.

\begin{remark}[Update of the prediction strategy and the betting function]\label{remark:update_of_the_betting}
    For the prediction strategy, we apply Online Gradient Ascent (OGA) for the prediction strategy as illustrated in \cite[Sections 3 and 4]{shekhar2025nonparametrictwosampletestingbetting}. Specifically, if a data source $k\in[K]$ is selected at a time slot $t$, its $g_{k,t}$ is updated by the collected samples drawn from this data source $k$ until time $t-1$ through OGA. For the betting function, Online Newton Strategy (ONS) \cite[Definition 5]{shekhar2025nonparametrictwosampletestingbetting} is applied, but replacing $v_t$ defined in by $v^{(\delta_t)}_{t}(g_{\delta_t,t})$ for all time slots $t\ge1$.
\end{remark}

\subsection{Main theorems}
Before stating our main theorem, we first present several lemmas that describe how the proposed test controls the action-taking process. These results will be used in the proof of our main theorem.

The following lemmas are built under Assumptions~\ref{a:positive_subopt_gap} to \ref{a:symmetric_G}. All detailed proof can be found in Appendix~\ref{a:useful_lemma}. 
\begin{lemma}[Each data source is sampled $\Omega(\log(t))$ times]\label{le:at_least_logt}
Under our selection policy, for any finite constant $c_1>0$, $\forall\,k\in[K]$,
    \begin{align}
        \mathbb{P}_{H_i}(N_{k,t}<c_1\log(t))\leq \frac{2}{t^5}\quad\forall\,t>T_1,i\in\{0,1\},
    \end{align} 
    where $T_1$ is a finite constant proportional to $K$ and $c_1$. 
\end{lemma}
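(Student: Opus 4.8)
The plan is to lower-bound $N_{k,t}$ by counting only the selections of source $k$ that come from the \emph{exploration} branch of the policy, and then control that count with a concentration inequality for independent Bernoulli sums. Concretely, at each slot $s$ the policy first flips an independent coin that lands ``explore'' with probability $\epsilon(s)$, and conditioned on exploring it draws the source uniformly from $[K]$; both randomizations are fresh and independent of $F_{s-1}$ and of the observed samples. Hence the indicators $B_s := \mathbbm{1}\{\text{slot } s \text{ explores and selects } k\}$ are \emph{independent} across $s$ with $\mathbb{P}(B_s=1) = \epsilon(s)/K$, and this structure is identical under $H_0$ and $H_1$ because exploration never inspects the data. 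Setting $M_{k,t} := \sum_{s=1}^t B_s$, we have $N_{k,t} \geq M_{k,t}$, so it suffices to bound the lower tail of $M_{k,t}$.

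First I would compute the mean. Since $\epsilon(s) = \min\{1, C/s\}$, for $t > C$ the standard harmonic-sum estimate gives
\[
\mu_t := \mathbb{E}[M_{k,t}] = \frac{1}{K}\sum_{s=1}^t \epsilon(s) \geq \frac{C}{K}\sum_{s=\lceil C\rceil}^t \frac{1}{s} \geq \frac{C}{K}\bigl(\log t - \log C - 1\bigr).
\]
Thus $\mu_t$ grows like $\tfrac{C}{K}\log t$, and beyond a finite threshold the lower-order $\log C$ correction is dominated, so that $\mu_t \geq \tfrac12\,\tfrac{C}{K}\log t$.

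Next I would apply the multiplicative Chernoff bound for the lower tail of a sum of independent Bernoullis, $\mathbb{P}(M_{k,t} < (1-\gamma)\mu_t) \leq \exp(-\gamma^2\mu_t/2)$. The threshold $T_1$ and the constant $C$ (the latter fixed in Appendix~\ref{a:proposed_test}) are then chosen so that two conditions hold simultaneously for all $t > T_1$: (i) $(1-\gamma)\mu_t \geq c_1\log t$, which holds once $C/K$ exceeds a fixed multiple of $c_1$ and $t$ is large enough to absorb the $\log C$ term, and this is precisely where the dependence of $T_1$ on $K$ and $c_1$ enters; and (ii) $\gamma^2\mu_t/2 \geq 5\log t$, so the Chernoff exponent yields $\exp(-\gamma^2\mu_t/2) \leq t^{-5} \leq 2/t^5$. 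Combining the two, $\mathbb{P}_{H_i}(N_{k,t} < c_1\log t) \leq \mathbb{P}(M_{k,t} < c_1\log t) \leq 2/t^5$ for both $i\in\{0,1\}$.

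The step I expect to be the main obstacle is the joint calibration in the last paragraph: the single mean $\mu_t \asymp \tfrac{C}{K}\log t$ must be made a large enough multiple of $c_1\log t$ (to clear the ``at least $c_1\log t$'' requirement) while being large enough that the Chernoff exponent beats $5\log t$ (to clear the $t^{-5}$ tail). Both are governed by how large $C/K$ is and by pushing $t$ past the finite threshold $T_1$ that swallows the $\log C + 1$ harmonic correction, so the delicate bookkeeping is to verify that the value of $C$ specified in the appendix makes both inequalities compatible for every $t > T_1$. A secondary point worth stating explicitly is that the independence of the exploration indicators $B_s$, and hence the entire argument, is unaffected by which hypothesis holds; this is exactly why the identical bound is claimed for both $i=0$ and $i=1$.
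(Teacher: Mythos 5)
Your proposal is correct, but it takes a genuinely different route from the paper. The paper keeps the full selection probability: it centers the indicators as $a_s=\mathbbm{1}\{\delta_s=k\}-P(\delta_s=k\mid F_{s-1})$, observes that this is a martingale difference sequence, lower-bounds $\sum_s P(\delta_s=k\mid F_{s-1})$ by $\frac{1}{K}\sum_s\epsilon(s)\geq\frac{C}{K}(\log t+1-\log C)$, upper-bounds the predictable quadratic variation by $O(C\log t)$, and applies a Bernstein inequality for martingale differences to get the exponent $(C/K-c_1)^2/(16C)\geq 5$. You instead discard the exploitation branch entirely, couple the policy to fresh exploration coins so that $N_{k,t}$ stochastically dominates a sum $M_{k,t}$ of \emph{independent} Bernoullis with means $\epsilon(s)/K$, and apply a multiplicative Chernoff bound. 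Your route is more elementary (no martingale concentration needed) and makes the hypothesis-independence of the bound transparent, since the exploration randomness never looks at the data; the paper gets the same conclusion by noting its argument ``only depends on the design of the exploration.'' Since the paper's Bernstein step also only uses the lower bound $P(\delta_s=k\mid F_{s-1})\geq\epsilon(s)/K$, nothing is lost by your truncation, and both arguments need the same harmonic-sum estimate and the same kind of threshold $T_1$ to absorb the $\log C$ correction. The one piece of bookkeeping you correctly flag as delicate is real but minor: your Chernoff calibration yields a constraint on $C$ of the form $C\gtrsim\max\{Kc_1,K\}$, which differs from (and is in fact weaker than, for moderate $c_1$) the paper's $C\geq K(c_1+40K)+4K\sqrt{5K(c_1+20K)}$, so if you insist on reusing the paper's exact $C$ you would need to rechoose $\gamma$ and rework the constants rather than take $\gamma=1/2$; the order of $T_1$ in $K$ and $c_1$ is unaffected.
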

Lemma~\ref{le:at_least_logt} is the intuitive result since our selection policy does uniform selection with probability $\Theta(1/t)$ at each time slot. 

Our next result builds upon~\ref{le:at_least_logt}. The high-level idea is that if a data source $k\in[K]$ is selected $\Omega(\log(t))$ times, its running average $\hat{\mu}_{k,t}$ will be close to the corresponding expected value $D^{(k)}_{\mathcal{G}}$ if the quality of the prediction is \emph{good}.

\begin{lemma}[Empirical mean estimation error]\label{le:empirical_mean_error}
If the regret of $a^*$ satisfies $R^{(a^*)}_t\in O(\sqrt{N_{a^*,t}})$, then
our proposed policy achieves
\begin{align}
    \mathbb{P}_{H_1}(\max_{k\neq a^*}\hat{\mu}_{k,t}\geq\hat{\mu}_{a^*,t})\in O\left( \frac{1}{t^5}\cdot\frac{K}{r^2} \right)\quad\forall\,t>T_2,
\end{align}
where $T_2>0$ is a finite constant proportional to $K$ and $1/L$.
\end{lemma}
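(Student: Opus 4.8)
The plan is to show that the running average of source $a^*$ dominates that of every competing source with high probability, by establishing a uniform concentration gap of width $r/2$ around the true distinguishabilities. First I would condition on the event that every source is sampled at least $c_1 \log(t)$ times, which by Lemma~\ref{le:at_least_logt} fails with probability at most $2K/t^5$ (union bound over $k \in [K]$) for $t > T_1$. On this event, it suffices to control the deviation $|\hat{\mu}_{k,t} - D^{(k)}_{\mathcal{G}}|$ for each $k$; if all these deviations are smaller than $r/2$, then since $D^{(a^*)}_{\mathcal{G}} - \max_{k \neq a^*} D^{(k)}_{\mathcal{G}} = r$, the ordering $\hat{\mu}_{a^*,t} > \max_{k \neq a^*} \hat{\mu}_{k,t}$ is forced.

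Next I would decompose $\hat{\mu}_{k,t}$ into a regret contribution plus a martingale-type average. Writing $\hat{\mu}_{k,t} = \frac{1}{N_{k,t}} \sum_{s \le t} v^{(k)}_s(g_{k,s}) \mathbbm{1}\{\delta_s = k\}$, the definition of individual regret in~\eqref{eq:individual_regret} gives the comparison against the best fixed $g_k$, and evaluating at $g^*_k$ yields $\sum_{s \le t} v^{(k)}_s(g^*_k)\mathbbm{1}\{\delta_s=k\} - R^{(k)}_t \le \sum_{s \le t} v^{(k)}_s(g_{k,s})\mathbbm{1}\{\delta_s=k\}$. The quantity $\frac{1}{N_{k,t}}\sum_{s \le t} v^{(k)}_s(g^*_k)\mathbbm{1}\{\delta_s=k\}$ is a sample average of bounded increments (each $v^{(k)}_s(g^*_k) \in [-1,1]$ since $g^*_k$ maps into $[-1/2,1/2]$) whose conditional mean equals $D^{(k)}_{\mathcal{G}}$ on the slots where $\delta_s = k$, so I would apply a Hoeffding- or Freedman-type martingale concentration inequality to bound its deviation from $D^{(k)}_{\mathcal{G}}$ by $r/4$ with probability at least $1 - 2\exp(-c N_{k,t} r^2)$. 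Since $N_{k,t} \ge c_1 \log(t)$, choosing $c_1$ proportional to $1/r^2$ (which is where the factor $K/r^2$ and the threshold $T_2 \propto K, 1/L$ enter, via the lower bound $L$ on $r$) makes this probability $O(t^{-5})$ per source.

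The remaining task is to verify that the regret term $R^{(a^*)}_t / N_{a^*,t}$ is also smaller than $r/4$ for large $t$; this is where the hypothesis $R^{(a^*)}_t \in O(\sqrt{N_{a^*,t}})$ is used, since $\sqrt{N_{a^*,t}}/N_{a^*,t} = 1/\sqrt{N_{a^*,t}} \to 0$ as $N_{a^*,t}$ grows like $\log(t)$. For the competing sources I would use that replacing $g_{k,s}$ by the possibly-larger $\sup_{g_k} v$ only \emph{inflates} their averages, so I need a one-sided bound there, and Assumption~\ref{a:symmetric_G} guarantees the deviations are symmetric and hence controllable in both directions. Combining the two-sided concentration at $a^*$ with the one-sided bound at each $k \neq a^*$ via a final union bound over the $K$ sources produces the stated $O\!\left(\frac{1}{t^5} \cdot \frac{K}{r^2}\right)$ rate.

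The main obstacle I anticipate is the non-i.i.d., adaptively-selected nature of the slots contributing to $\hat{\mu}_{k,t}$: the set $\{s : \delta_s = k\}$ is itself random and depends on the past, and the prediction functions $g_{k,s}$ are $F_{s-1}$-measurable rather than fixed, so one cannot directly invoke a classical i.i.d. concentration bound. The clean way around this is to work with the martingale difference sequence $v^{(k)}_s(g^*_k)\mathbbm{1}\{\delta_s=k\} - D^{(k)}_{\mathcal{G}}\mathbbm{1}\{\delta_s=k\}$ adapted to $\{F_s\}$ and apply a time-uniform martingale concentration inequality, then translate the deviation bound from the count $N_{k,t}$ to the time index $t$ using Lemma~\ref{le:at_least_logt}; care is needed because $N_{k,t}$ appears in the denominator and is random, so I would condition on the high-probability event $\{N_{k,t} \ge c_1 \log(t)\}$ before normalizing.
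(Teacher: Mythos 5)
Your proposal follows essentially the same route as the paper: split the mis-ordering event into a downward $r/2$-deviation at $a^*$ and upward $r/2$-deviations at each $k\neq a^*$, handle the former via the regret decomposition at $g^*_{a^*}$ plus Azuma/Bernstein on the count-indexed martingale (with the hypothesis $R^{(a^*)}_t\in O(\sqrt{N_{a^*,t}})$ killing the regret term), handle the latter one-sidedly, and convert counts to time via Lemma~\ref{le:at_least_logt} with $c_1\propto 1/r^2$, finishing with a union bound over the $K$ sources. The one small inaccuracy is attributing the one-sided control at sub-optimal sources to Assumption~\ref{a:symmetric_G}; the paper instead uses that $g_{k,s}$ is $F_{s-1}$-measurable, so $\mathbb{E}[v^{(k)}_s(g_{k,s})\mid F_{s-1}]\le D^{(k)}_{\mathcal{G}}$ by definition of the supremum, which is what makes the upward deviation a martingale-difference sum --- but this does not change the argument.
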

Lemma~\ref{le:empirical_mean_error} demonstrates that as time is large enough, our selection policy selects the optimal data sources with a high probability. The condition on the regret of the prediction strategy can be achieved by a direct application of the regret bound of OGA \cite[Appendix A.4]{shekhar2025nonparametrictwosampletestingbetting}.

\begin{lemma}[Sub-optimal source is drawn at most $O(\sqrt{t})$ times]\label{le:sub-opt_source_is_drawn_at_most_n0.5}
By Lemma~\ref{le:empirical_mean_error}, for all $k\neq a^*$, if the regret of the prediction strategy on $a^*$ satisfies $R^{(a^*)}_t\in O(\sqrt{N_{a^*,t}})$, then
\begin{align}
    \mathbb{P}_{H_1}\left(N_{k,t}>\sqrt{t}\right)\in O\left(\frac{1}{t^2}\cdot\frac{K}{r^2}\right)\quad\forall\,t>T_3,
\end{align}
where $T_3>0$ is a finite constant proportional to $K$ and $1/L$.
\end{lemma}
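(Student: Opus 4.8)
The plan is to split every selection of a fixed sub-optimal source $k\neq a^*$ according to whether it came from the exploration branch or the exploitation branch of the $\epsilon(t)$-greedy rule, and to control the two resulting counts separately. Writing $E_s$ for the event that slot $s$ uses the uniform (exploration) draw, I would decompose
\begin{align}
    N_{k,t} = \underbrace{\sum_{s=1}^{t}\mathbbm{1}\{\delta_s=k, E_s\}}_{=:\,N^{\mathrm{exp}}_{k,t}} \;+\; \underbrace{\sum_{s=1}^{t}\mathbbm{1}\{\delta_s=k, E_s^c\}}_{=:\,N^{\mathrm{expl}}_{k,t}},
\end{align}
so that $\{N_{k,t}>\sqrt{t}\}\subseteq\{N^{\mathrm{exp}}_{k,t}>\sqrt{t}/2\}\cup\{N^{\mathrm{expl}}_{k,t}>\sqrt{t}/2\}$, and bound each probability by $O(t^{-2}K/r^2)$.

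For the exploration count, I would use that the coin deciding $E_s$ and the subsequent uniform draw are fresh randomness independent of $F_{s-1}$; hence $\{\mathbbm{1}\{\delta_s=k,E_s\}\}_{s\ge1}$ are independent $\mathrm{Bernoulli}(\epsilon(s)/K)$ variables with total mean $\mu_t:=\tfrac{1}{K}\sum_{s=1}^{t}\epsilon(s)\in O(\tfrac{C}{K}\log t)$, since $\epsilon(s)=\min\{1,C/s\}$. As the threshold $\sqrt{t}/2$ eventually dominates $\mu_t=O(\log t)$, a multiplicative Chernoff bound for sums of independent Bernoullis gives $\mathbb{P}(N^{\mathrm{exp}}_{k,t}>\sqrt{t}/2)\le(e\mu_t/(\sqrt{t}/2))^{\sqrt{t}/2}$, which decays faster than any polynomial in $t$ and is in particular $O(t^{-2})$ once $t$ exceeds a constant.

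The exploitation count is where the real work lies, and where the rate $O(t^{-2})$ (rather than the weaker $O(t^{-1/2})$ that a direct expectation/Markov argument yields) must come from. The key observation is a pigeonhole localization: a selection of $k$ via exploitation at slot $s$ forces $k=\argmax_j\hat{\mu}_{j,s-1}$, hence in particular $\max_{j\neq a^*}\hat{\mu}_{j,s-1}\ge\hat{\mu}_{a^*,s-1}$, which is exactly the ``bad'' event controlled by Lemma~\ref{le:empirical_mean_error}. Moreover, if $N^{\mathrm{expl}}_{k,t}>\sqrt{t}/2$, then since these selections occur at distinct integer slots in $[1,t]$, at least one of them falls at a slot $s\ge\sqrt{t}/2$, so
\begin{align}
    \{N^{\mathrm{expl}}_{k,t}>\sqrt{t}/2\} \subseteq \bigcup_{s=\lceil\sqrt{t}/2\rceil}^{t}\{\max_{j\neq a^*}\hat{\mu}_{j,s-1}\ge\hat{\mu}_{a^*,s-1}\}.
\end{align}
A union bound together with Lemma~\ref{le:empirical_mean_error} applied at index $s-1$ (valid once $\sqrt{t}/2$ exceeds $T_2+1$) then gives
\begin{align}
    \mathbb{P}_{H_1}(N^{\mathrm{expl}}_{k,t}>\sqrt{t}/2) \le \sum_{s=\lceil\sqrt{t}/2\rceil}^{t} O\!\left(\frac{1}{(s-1)^5}\cdot\frac{K}{r^2}\right) \in O\!\left(\frac{1}{t^2}\cdot\frac{K}{r^2}\right),
\end{align}
where the last step uses the tail estimate $\sum_{s\ge\sqrt{t}/2}s^{-5}=O(t^{-2})$. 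Combining the two parts, and choosing $T_3$ large enough that $\sqrt{t}/2>T_2+1$ (so Lemma~\ref{le:empirical_mean_error} applies throughout the union) and that the exploration tail is polynomially negligible, yields $\mathbb{P}_{H_1}(N_{k,t}>\sqrt{t})\in O(t^{-2}K/r^2)$ for all $t>T_3$.

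The hard part will be precisely this passage to the $t^{-2}$ rate. Bounding $\mathbb{E}[N^{\mathrm{expl}}_{k,t}]$ directly gives only the constant $O(K/r^2)$, after which Markov's inequality loses a factor $\sqrt{t}$ and leaves only $O(t^{-1/2})$. The pigeonhole step circumvents this by exploiting that a large exploitation count \emph{necessarily} produces a bad event at a \emph{large} time index $s\gtrsim\sqrt{t}$, where the $s^{-5}$ decay of Lemma~\ref{le:empirical_mean_error} is already tiny; summing the fast tail then converts the per-slot $s^{-5}$ rate into the desired $t^{-2}$ bound. A secondary point to verify carefully is the constant $T_3$: one needs the entire union range to lie in the regime where Lemma~\ref{le:empirical_mean_error} holds, which fixes $T_3$ as a finite constant depending on $K$ and $L$ through $T_2$.
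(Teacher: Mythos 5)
Your proof is correct, and it reaches the crucial $t^{-2}$ rate by the same underlying mechanism as the paper, though through a noticeably different decomposition. The paper centers $N_{k,t}$ by its conditional compensator $\sum_{s\le t}P(\delta_s=k\mid F_{s-1})$, controls the martingale fluctuation with a Bernstein inequality (giving an $\exp(-3\sqrt{t}/8)$ term), and bounds the compensator by the number of mis-ordering slots plus the $O(\log t)$ exploration mass; the mis-ordering count is then handled in a separate lemma by introducing the last mis-ordering time $\tilde{\tau}$ and union-bounding $\mathbb{P}(\tilde{\tau}>\sqrt{t}/4)$ over slots $m\gtrsim\sqrt{t}$ using the per-slot bound of Lemma~\ref{le:empirical_mean_error}. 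You instead split $N_{k,t}$ by branch, kill the exploration count with a multiplicative Chernoff bound for independent Bernoullis, and handle the exploitation count by the pigeonhole observation that a count exceeding $\sqrt{t}/2$ forces a mis-ordering at some slot $s\ge\sqrt{t}/2$. Both arguments ultimately convert the $s^{-5}$ decay of Lemma~\ref{le:empirical_mean_error} into $t^{-2}$ by summing its tail from $s\asymp\sqrt{t}$; your pigeonhole step and the paper's $\tilde{\tau}$ argument are two phrasings of the same idea. What your route buys is a cleaner structure that avoids the martingale compensator and Bernstein machinery for this lemma (the exploration part is genuinely independent randomness, so Chernoff suffices); what the paper's route buys is that the compensator bound and the mis-ordering lemma are reused elsewhere in the stopping-time analysis. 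One small point to make explicit: your requirement $\sqrt{t}/2>T_2+1$ makes your $T_3$ scale like $T_2^2$ rather than linearly in $T_2$, which is consistent with (indeed no worse than) the paper's own $T_3=\max\{T_2+1,\,T_{3,1},\,144C^2\log^2(12C)\}$, but you should state the resulting dependence on $K$ and $L$ rather than leaving it implicit.
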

Lemma~\ref{le:sub-opt_source_is_drawn_at_most_n0.5} is an application of Lemma~\ref{le:empirical_mean_error}.
With these three Lemmas, we can view that our proposed test \emph{usually} receives samples drawn from the data source $a^*$. Then, our main theorem is as follows: We use $\forall\,H_i$ to denote the class of all distributions that satisfy the hypothesis $H_i$ for $i\in\left\{0,1\right\}$.
\begin{theorem}\label{thm:main_thm}
    The proposed test satisfies the following:
    \begin{itemize}
        \item Level-$\alpha$ property: Under Assumption~\ref{a:symmetric_G}, for any $\left\{\mathcal{A}^{(k)}_{\text{pred}}\right\}_{k\in[K]}$, $\mathcal{A}_{\text{bet}}$, and selection policy defined in Definition~\ref{def:ASN2T}, given a predefined accuracy $\alpha\in(0,1)$, 
            \begin{align}
            \mathbb{P}_{H_0}(\tau^{(\text{ASN2T})}<\infty)\leq\alpha\quad\forall\,H_0.
            \end{align}
        \item Power-one property: Under Assumptions~\ref{a:positive_subopt_gap} to \ref{a:symmetric_G}, if
            \begin{align}
                \limsup_{t\rightarrow\infty}\left(\frac{R^{(a^*)}_{t}}{t}-\frac{1}{t}\sum_{s=1}^{t}D^{(a^*)}_{\mathcal{G}}\mathbbm{1}\left\{\delta_s=a^*\right\}\right) \stackrel{\text{a.s.}}{<}0,\label{eq:condition_for_power-one}
            \end{align}
            then
            \begin{equation}
                \mathbb{P}_{H_1}(\tau^{(\text{ASN2T})}<\infty)=1\quad\forall H_1.
            \end{equation}
        \item Expected stopping time: Under Assumptions~\ref{a:positive_subopt_gap} to \ref{a:symmetric_G}, if there exists  $\left\{r_{t}\right\}_{t\ge1}$ such that $r_t\overset{t\rightarrow\infty}{\rightarrow}0$ and $\sum_{t\ge1}\mathbb{P}(E^c_{t,a^*})<\infty$ where $E_{t,a^*}:=\left\{R^{(a^*)}_{t}/t\leq r_{t}\right\}$, then
                \begin{align}
                    \mathbb{E}_{H_1}[\tau^{(\text{ASN2T})}]\in O\left(t_0+\sum_{t\geq 1}\mathbb{P}(E^c_{t,a^*})+T_{\text{explore}}\right)
                \end{align}
                where
                \begin{align}
                    &t_0:=\inf\left\{ t\ge1:D^{(a^*)}_{\mathcal{G}}\geq r_{t}+\sigma_{a^*}\sqrt{\frac{\log\frac{t}{\alpha}}{t}}+\frac{\log \frac{t}{\alpha}}{t}\right\},\\
                    &\sigma_{k}:=\sqrt{\sup_{g_{k}}\text{Var}(g_{k}(X_{k,1})-g_{k}(X_{k,2}))}\quad\forall\,k\in[K],
                \end{align}
                and $T_{\text{explore}}$ denotes the cost of exploration, which is proportional to $K$ and $1/L$.
    \end{itemize}
\end{theorem}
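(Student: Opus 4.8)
The plan is to establish the three properties in turn, reusing the betting-wealth structure throughout. For the \textbf{level-$\alpha$} part, I would first argue that $\{W_t\}_{t\ge0}$ is a nonnegative martingale under $H_0$ with $W_0=1$. Nonnegativity is immediate since $|v^{(\delta_t)}_t(g_{\delta_t,t})|\le 1$ (each $g\in\mathcal{G}$ maps into $[-1/2,1/2]$) and $\lambda_t\in[-1,1]$, so each factor $1+\lambda_t v^{(\delta_t)}_t(g_{\delta_t,t})\ge 0$. For the martingale property I would condition on $F_{t-1}$: the bet $\lambda_t$, the predictor $g_{\delta_t,t}$, and the (randomized) law of $\delta_t$ are all $F_{t-1}$-measurable, while the fresh pair is conditionally independent of the past given $\delta_t$. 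Summing over the law of $\delta_t$ and using that under $H_0$ we have $P_{k,1}=P_{k,2}$ for every $k$ gives $\mathbb{E}_{H_0}[v^{(\delta_t)}_t(g_{\delta_t,t})\mid F_{t-1}]=0$, hence $\mathbb{E}_{H_0}[W_t\mid F_{t-1}]=W_{t-1}$. Applying Ville's inequality \cite{ville1928} yields $\mathbb{P}_{H_0}(\exists t: W_t\ge 1/\alpha)\le \alpha$, i.e.\ $\mathbb{P}_{H_0}(\tau^{(\text{ASN2T})}<\infty)\le\alpha$.

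For the \textbf{power-one} part, I would show $\log W_t\to\infty$ a.s.\ under $H_1$. Using the regret guarantee of the betting strategy (ONS) against a fixed comparator $\lambda^\dagger\in(0,1/2]$ together with $\log(1+x)\ge x-x^2$ for $|x|\le 1/2$, one gets $\log W_t \ge \lambda^\dagger S_t - (\lambda^\dagger)^2 t - O(\log t)$, where $S_t:=\sum_{s=1}^t v^{(\delta_s)}_s(g_{\delta_s,s})$. I would then lower bound $S_t$ by splitting over sources: on the $a^*$-rounds, the regret definition \eqref{eq:individual_regret} gives $\sum_{s:\delta_s=a^*}v^{(a^*)}_s(g_{a^*,s})\ge \sum_{s:\delta_s=a^*}v^{(a^*)}_s(g^*_{a^*}) - R^{(a^*)}_t$, and the martingale SLLN applied to the centered increments $v^{(a^*)}_s(g^*_{a^*})-D^{(a^*)}_{\mathcal{G}}$ shows this empirical sum tracks $\sum_{s:\delta_s=a^*}D^{(a^*)}_{\mathcal{G}}$; the remaining suboptimal rounds contribute at least $-(K-1)\sqrt{t}$ with high probability by Lemma~\ref{le:sub-opt_source_is_drawn_at_most_n0.5}. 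Combining these with the hypothesis~\eqref{eq:condition_for_power-one}, which forces $\liminf_t \tfrac1t\big(\sum_{s:\delta_s=a^*}D^{(a^*)}_{\mathcal{G}}-R^{(a^*)}_t\big)>0$ a.s., I obtain $\liminf_t S_t/t \ge c>0$ a.s.; choosing $\lambda^\dagger$ small enough makes the linear term dominate, so $\log W_t\to\infty$ and $\tau^{(\text{ASN2T})}<\infty$ a.s.

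For the \textbf{expected stopping time}, I would use $\mathbb{E}_{H_1}[\tau]=\sum_{t\ge0}\mathbb{P}_{H_1}(\tau>t)$ together with $\{\tau>t\}\subseteq\{W_t<1/\alpha\}$. The crux is a high-probability lower bound on $\log W_t$ that crosses $\log(1/\alpha)$ once $t$ passes $t_0+T_{\text{explore}}$. On the good event I would intersect (i) the Lemma~\ref{le:sub-opt_source_is_drawn_at_most_n0.5} event $\{N_{k,t}\le\sqrt t\;\forall k\ne a^*\}$, so $N_{a^*,t}\ge t-(K-1)\sqrt t$; (ii) the regret event $E_{t,a^*}=\{R^{(a^*)}_t/t\le r_t\}$; and (iii) a Freedman/Bernstein concentration event for the martingale $\sum_{s:\delta_s=a^*}(v^{(a^*)}_s(g^*_{a^*})-D^{(a^*)}_{\mathcal{G}})$ at confidence $\log(t/\alpha)$, contributing the fluctuation $\sigma_{a^*}\sqrt{\log(t/\alpha)/t}$ and the lower-order $\log(t/\alpha)/t$. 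On this event the average $S_t/t$ exceeds $D^{(a^*)}_{\mathcal{G}}-r_t-\sigma_{a^*}\sqrt{\log(t/\alpha)/t}-\log(t/\alpha)/t-O(K/\sqrt t)$, so by the very definition of $t_0$ the log-wealth crosses the threshold for all $t\gtrsim t_0+T_{\text{explore}}$, giving $\mathbb{P}_{H_1}(\tau>t)\le \mathbb{P}(\text{good event}^c)$. Finally I would sum the tails: the Lemma~\ref{le:sub-opt_source_is_drawn_at_most_n0.5} complement is $O(K/(r^2t^2))$ (summable), the concentration complement is chosen summable (e.g.\ $O(1/t^2)$), and $\sum_t\mathbb{P}(E^c_{t,a^*})<\infty$ by assumption, yielding $\mathbb{E}_{H_1}[\tau]\in O\big(t_0+\sum_{t\ge1}\mathbb{P}(E^c_{t,a^*})+T_{\text{explore}}\big)$.

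The main obstacle I anticipate is this last part: assembling a \emph{clean} high-probability lower bound on $\log W_t$ that simultaneously (a) tracks the correct variance $\sigma_{a^*}^2$ through a martingale concentration inequality for the non-i.i.d., adaptively-sampled increments, (b) absorbs both the prediction regret $R^{(a^*)}_t$ and the $O(\log t)$ betting regret, and (c) quantifies the exploration overhead from the $\epsilon(t)\in\Theta(1/t)$ forced sampling so the correction terms collapse into a $T_{\text{explore}}$ proportional to $K$ and $1/L$. Matching these three error sources precisely against the three terms defining $t_0$ (the prediction term $r_t$, the statistical term $\sigma_{a^*}\sqrt{\log(t/\alpha)/t}$, and the threshold term $\log(t/\alpha)/t$) is the delicate accounting step.
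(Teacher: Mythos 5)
Your proposal follows essentially the same route as the paper for all three parts: the nonnegative-martingale-plus-Ville argument for the level-$\alpha$ property, the ONS wealth lower bound combined with the per-source decomposition, Lemma~\ref{le:sub-opt_source_is_drawn_at_most_n0.5}, and a martingale SLLN for power one, and the tail-sum-over-good-events plan (exploration event, regret event, Bernstein concentration) for the expected stopping time. The only caveat is that your third bullet stops at the plan stage: the ``delicate accounting'' you defer is precisely what the paper carries out via the $B_t$/$B_t^c$ case split into the events $D_{t,1},D_{t,2},D_{t,3}$ and the Bernstein events $G_{t,1},G_{t,2}$ of Lemma~\ref{le:technical_lemma_for_stopping_time}, which is where the variance-adaptive term $\sigma_{a^*}\sqrt{\log(t/\alpha)/t}$ in $t_0$ actually comes from.
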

The proof of Theorem~\ref{thm:main_thm} is in Appendix~\ref{a:proof_of_main_thm}. We incorporate the active selection with the ideas in \cite{shekhar2025nonparametrictwosampletestingbetting}. Specifically,
the level-$\alpha$ property is a direct application of Ville's inequality since $\left\{W_t\right\}_{t\ge0}$ is a non-negative martingale. The power-one property follows as a consequence of a martingale strong law of large numbers argument, where the condition \eqref{eq:condition_for_power-one} can be achieved since $R^{(a^*)}_{t}(\mathcal{A}^{a^*}_{\text{pred}})/t$ converges to zero as $t\rightarrow\infty$ almost surely (by the regret of OGA \cite[Appendix A.4]{shekhar2025nonparametrictwosampletestingbetting}), $N_{a^*,t}/t$ converges to one as $t\rightarrow\infty$ almost surely (from Lemma~\ref{le:sub-opt_source_is_drawn_at_most_n0.5}), and $D^{(a^*)}_{\mathcal{G}}>0$. 

More interestingly, the expected stopping time obtained here may be smaller than the passive setting \cite{shekhar2025nonparametrictwosampletestingbetting} in some cases. In applications considered in \cite[Section 3 and 4]{shekhar2025nonparametrictwosampletestingbetting}, one can verify that the quantity $t_0$ defined in this paper and the term $\sum_{t\geq 1}\mathbb{P}(E^c_{t,a^*})+T_{\text{explore}}\in O(1)$ are both of order $O(\sigma^2_{a^*}\log(1/D^{(a^*)}_{\mathcal{G}}\alpha)/(D^{(a^*)}_{\mathcal{G}})^2)$. Consequently, when $\alpha$ is small enough so that this term dominates the expected stopping time, and when $\sigma_{a^*}^2 \leq \max_{k \neq a^*} \sigma_k^2$, our proposed test achieves a smaller expected stopping time than the procedure in \cite{shekhar2025nonparametrictwosampletestingbetting} whenever their (single) data source does not coincide with the most informative source 
$a^*$ in our setting. This demonstrates the gain from heterogeneity (so-called the gain of adaptivity). We summarize this comparison in Corollary~\ref{co:comparison}. 

\begin{corollary}[Gain of adaptivity]\label{co:comparison}
If the passive setting \cite{shekhar2025nonparametrictwosampletestingbetting} uses a sub-optimal data source and $\sigma_{a^*}^2\leq\max_{k\neq a^*}\sigma_{k}^2$,  
then
\begin{align}
        \mathbb{E}_{H_1}[\tau^{\text{ASN2T}}]\leq\mathbb{E}_{H_1}[\tau^{(p)}]+O\left(\frac{K^2}{r^2}(\log(1/\alpha))^{2/3}\right),\label{eq:comparison_upperbound}
    \end{align}
    where $\tau^{(p)}$ denotes the stopping time in the general test proposed in \cite{shekhar2025nonparametrictwosampletestingbetting}.
    Asymptotically,  
    \begin{align}
        \lim_{\alpha\rightarrow0}\frac{\mathbb{E}[\tau^{\text{ASN2T}}]}{\log(1/\alpha)}\leq\lim_{\alpha\rightarrow0}\frac{\mathbb{E}[\tau^{(p)}]}{\log(1/\alpha)}.
    \end{align}
\end{corollary}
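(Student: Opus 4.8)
The plan is to reduce Corollary~\ref{co:comparison} to a term-by-term comparison between the expected-stopping-time bound of Theorem~\ref{thm:main_thm} (third bullet) and the single-source bound of~\cite{shekhar2025nonparametrictwosampletestingbetting}. First I would verify that the hypotheses of that bullet hold for our instantiation: the OGA regret guarantee gives $R^{(a^*)}_t\in O(\sqrt{N_{a^*,t}})$, and since Lemma~\ref{le:sub-opt_source_is_drawn_at_most_n0.5} forces $N_{a^*,t}\approx t$, we obtain $R^{(a^*)}_t/t\in O(1/\sqrt{t})$; choosing $r_t$ of this order makes $r_t\to 0$ and, via the concentration-failure bounds of Lemmas~\ref{le:empirical_mean_error}--\ref{le:sub-opt_source_is_drawn_at_most_n0.5}, makes $\sum_t\mathbb{P}(E^c_{t,a^*})<\infty$. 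This activates the bound $\mathbb{E}_{H_1}[\tau^{(\text{ASN2T})}]\in O\bigl(t_0+\sum_t\mathbb{P}(E^c_{t,a^*})+T_{\text{explore}}\bigr)$.

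Next I would pin down the order of each term. For $t_0$, solving the defining inequality $D^{(a^*)}_{\mathcal{G}}\ge r_t+\sigma_{a^*}\sqrt{\log(t/\alpha)/t}+\log(t/\alpha)/t$ shows that the binding term for small $r_t$ and small $\alpha$ is $\sigma_{a^*}\sqrt{\log(t/\alpha)/t}$, so $t_0\in O\!\left(\sigma^2_{a^*}\log\!\bigl(1/(D^{(a^*)}_{\mathcal{G}}\alpha)\bigr)/(D^{(a^*)}_{\mathcal{G}})^2\right)$, i.e.\ a leading term $\asymp\log(1/\alpha)$ with coefficient $\sigma^2_{a^*}/(D^{(a^*)}_{\mathcal{G}})^2$. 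The remaining two terms form the additive exploration cost: $\sum_t\mathbb{P}(E^c_{t,a^*})$ is $O(1)$ by summability, while $T_{\text{explore}}$, generated by the vanishing rule $\epsilon(t)=\min\{1,C/t\}$ with $C\in O(K/L^2)$, accumulates sub-optimal draws up to the horizon $t_0\asymp\log(1/\alpha)$; bounding these draws against the exploration budget and the failure probabilities yields the stated $O\!\left((K^2/r^2)(\log(1/\alpha))^{2/3}\right)$.

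On the passive side, the single-source test of~\cite{shekhar2025nonparametrictwosampletestingbetting} run on a sub-optimal source $b\neq a^*$ satisfies $\mathbb{E}_{H_1}[\tau^{(p)}]\sim(\sigma^2_b/(D^{(b)}_{\mathcal{G}})^2)\log(1/\alpha)$. Since $b$ is sub-optimal, $D^{(b)}_{\mathcal{G}}\le D^{(a^*)}_{\mathcal{G}}$, and the hypothesis $\sigma^2_{a^*}\le\max_{k\neq a^*}\sigma^2_k$ (applied to the relevant, maximal-variance sub-optimal source) gives $\sigma^2_{a^*}/(D^{(a^*)}_{\mathcal{G}})^2\le\sigma^2_b/(D^{(b)}_{\mathcal{G}})^2$; that is, the active leading coefficient never exceeds the passive one, so $t_0\lesssim\mathbb{E}_{H_1}[\tau^{(p)}]$ for small $\alpha$. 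Adding the exploration cost then gives the first display~\eqref{eq:comparison_upperbound}. Dividing by $\log(1/\alpha)$ and letting $\alpha\to0$, the additive $(\log(1/\alpha))^{2/3}$ term is $o(\log(1/\alpha))$ and drops out, leaving exactly the ordering of leading coefficients, which is the asymptotic statement.

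I expect the main obstacle to be pinning down the exact order of the additive exploration term --- in particular obtaining the $(\log(1/\alpha))^{2/3}$ exponent and the $K^2/r^2$ prefactor. This requires carefully trading off the exploration rate $\epsilon(t)\propto K/(L^2 t)$ against the per-step loss incurred on sub-optimal sources and the concentration-failure bounds of Lemmas~\ref{le:empirical_mean_error}--\ref{le:sub-opt_source_is_drawn_at_most_n0.5}, and then evaluating the resulting expression at $t_0\asymp\log(1/\alpha)$; getting this bookkeeping right (rather than a crude $\log(1/\alpha)$ bound, which would already suffice for the asymptotic statement but not for the sharper additive display) is the crux.
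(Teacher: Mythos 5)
Your overall strategy---compare the expected-stopping-time bound of Theorem~\ref{thm:main_thm} term by term with the single-source bound, use $D^{(b)}_{\mathcal{G}}\le D^{(a^*)}_{\mathcal{G}}$ together with the variance hypothesis for the leading term, and argue that the remainder is second order---is the right one, but your accounting of the additive error has a genuine gap. You attribute the $O\bigl((K^2/r^2)(\log(1/\alpha))^{2/3}\bigr)$ term to $T_{\text{explore}}$, i.e.\ to sub-optimal draws accumulated by the $\epsilon(t)$-greedy rule up to the horizon $t_0\asymp\log(1/\alpha)$. That is not where it comes from: in the paper's proof $T_{\text{explore}}\equiv T_3\in\tilde{O}(C^3)\asymp\tilde{O}(K^3/L^6)$ is \emph{independent of $\alpha$} and is explicitly $o\bigl((K^2/r^2)(\log(1/\alpha))^{2/3}\bigr)$ as $\alpha\to0$. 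The $(\log(1/\alpha))^{2/3}$ term instead arises from the difference $t_1-t_1^{(p)}$: the active critical time $t_1$ in Lemma~\ref{le:upper_bound_of_p(d_n1)} contains the extra terms $\tfrac{K-1}{\sqrt t}$ and $7\sqrt{(K-1)\log(t/\alpha)}/t^{3/4}$ (the latter coming from the at-most-$\sqrt t$ sub-optimal draws entering the empirical second moment), and solving for the time at which these drop below the available slack yields an additive cost of the stated order. Without identifying this source you cannot produce the claimed exponent, and your own flagged ``main obstacle'' is precisely this unresolved step.

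Relatedly, your route to the display \eqref{eq:comparison_upperbound} goes through the leading-order comparison $t_0\lesssim\mathbb{E}_{H_1}[\tau^{(p)}]$ for small $\alpha$, i.e.\ a comparison of the coefficients $\sigma^2/D^2$ multiplying $\log(1/\alpha)$. That suffices for the asymptotic statement (any $o(\log(1/\alpha))$ discrepancy vanishes after dividing by $\log(1/\alpha)$), but it is too coarse for the first display: a multiplicative $(1+o(1))$ error on a term of size $\log(1/\alpha)$ is not controlled by an additive $(\log(1/\alpha))^{2/3}$. The paper avoids this by observing that $t_1,t_2,t_3$ and $t_1^{(p)},t_2^{(p)},t_3^{(p)}$ are defined by \emph{identical} inequalities up to (i) replacing $(D^{(k)}_{\mathcal{G}},\sigma_k)$ by $(D^{(a^*)}_{\mathcal{G}},\sigma_{a^*})$, which under the corollary's hypotheses only decreases the critical time, and (ii) the explicit second-order exploration terms above; this yields the exact additive relations $t_i=t_i^{(p)}+O(\cdot)$ rather than a leading-order equivalence. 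I would restructure your argument around that term-by-term comparison of the defining inequalities.
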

The additional cost in the upper bound \eqref{eq:comparison_upperbound} is due to exploration. The detailed proof of the corollary is in Appendix~\ref{a:proof_of_comparison}. 

\section{Conclusion}
In this paper, we introduce active source selection into general nonparametric sequential two-sample testing. Our proposed test combines the powerful testing-by-betting framework of \cite{shekhar2025nonparametrictwosampletestingbetting} with a simple yet effective $\epsilon(t)$-greedy exploration strategy. Our test is a level-$\alpha$ power-one procedure, and we show that adaptive source selection yields a provable gain in terms of the expected stopping time. To the best of our knowledge, this is the first work that studies active selection in sequential hypothesis testing in a nonparametric model (without the knowledge of data distribution). Our selection policy may also be adapted to more general sequential nonparametric testing problems. Finally, our setting naturally suggests a new \emph{best-arm identification problem} with time-varying rewards, where each arm has an unknown reward distribution that somehow converges over time.

\newpage
\IEEEtriggeratref{9}
\bibliographystyle{IEEEtran}
\bibliography{A2T}

\onecolumn 
\section{Appendix}
In this Appendix, we first provide details of our proposed test in Appendix~\ref{a:proposed_test}. The ONS betting strategy in Appendix~\ref{a:ons_betting}, which is similar to that in \cite{shekhar2025nonparametrictwosampletestingbetting}. Following this, the detailed proofs of Lemma~\ref{le:at_least_logt}, Lemma~\ref{le:empirical_mean_error}, and Lemma~\ref{le:sub-opt_source_is_drawn_at_most_n0.5} are shown in Appendix~\ref{a:useful_lemma}. The proof of Theorem~\ref{thm:main_thm} is provided in Appendix~\ref{a:proof_of_main_thm}, where we mainly follow the proof idea in \cite[Theorem 1]{shekhar2025nonparametrictwosampletestingbetting} with careful handling of the action-taking behavior. Then, we provide the detailed proof of Corollary~\ref{co:comparison} in Appendix~\ref{a:proof_of_comparison}. Finally, we list technical lemmas in Appendix~\ref{a:technical_lemmas} for the above proofs.

\subsection{Proposed test}\label{a:proposed_test}
Our proposed test is summarized as follows: The data-driven betting-wealth process and our stopping time are defined as $\left\{W_t\right\}_{t\ge0}$ and $\tau^{(\text{ASN2T})}$. Set $W_0=1$ and
\begin{align}
    C:=K(c_1 + 40K) + 4K \sqrt{5K(c_1 + 20K)},\quad c_1=\frac{2560}{L^2},
\end{align}
where $L$ is the lower bound of the suboptimal gap $r$ defined in Section~\ref{sec:problem_form}. Then,  at each time slot $t\ge1$,
\begin{align}
    &\epsilon(t)=\min\left\{1,C/t\right\},\\
    &\delta_t
    \begin{cases}
        =\argmax_{k\in[K]}\hat{\mu}_{k,{t-1}}\quad\text{with probability }1-\epsilon(t),\\
        \sim\text{Unif}([K])\quad\text{with probability }\epsilon(t),
    \end{cases}\\
    &W_{t}=W_{t-1}\cdot(1+\lambda_t v^{(\delta_t)}_t(g_{\delta_t,t})),
\end{align}
and $\tau^{(\text{ASN2T})}:=\inf\left\{t\ge1:W_t\geq1/\alpha\right\}$,
where $v^{(k)}_t(g)$ and the running average $\hat{\mu}_{k,t}$ are defined as, $\forall\,k\in[K], t\geq1, g\in\mathcal{G}$,
\begin{align}
    &v^{(k)}_t(g):=g(X_{k,1}(t))-g(X_{k,2}(t)),\\
    &\hat{\mu}_{k,t}:=\frac{1}{N_{k,t}}\sum_{s=1}^{t}v^{(k)}_s(g_{k,s})\mathbbm{1}\left\{\delta_s=k\right\},\\
    &N_{k,t}:=\sum_{s=1}^{t}\mathbbm{1}\left\{\delta_s=k\right\}.
\end{align}

\subsection{ONS betting strategy}\label{a:ons_betting}
In this section, we provide the ONS betting strategy for updating $\lambda_{t}$ defined in our proposed test. Recall that
\begin{align}
    v^{(k)}_t(g):=g(X_{k,1}(t))-g(X_{k,2}(t))\quad\forall\,k\in[K], t\ge1, g\in\mathcal{G},
\end{align}
we have the following definition.
\begin{definition}(ONS betting strategy)
    Initialize $\lambda_1 = 0$, and $a_0 = 1$. Then, for $t = 1, 2, \ldots$:
\begin{itemize}
    \item Observe $v^{(\delta_t)}_{t}(g_{\delta_t,t}) \in [-1,1]$.
    \item Set $z_t = v^{(\delta_t)}_{t}(g_{\delta_t,t}) / (1 + v^{(\delta_t)}_{t}(g_{\delta_t,t}) \lambda_t)$.
    \item Update $a_t = a_{t-1} + z_t^2$.
    \item Update $\lambda_{t+1}$ as follows:
    \[
        \lambda_{t+1} = \min\left\{ \frac{1}{2}, \max\left\{ -\frac{1}{2}, \lambda_t - \frac{2}{2 - \log 3} \cdot \frac{z_t}{a_t} \right\} \right\}.
    \]
\end{itemize}
\end{definition}
Therefore, following the argument in \cite{Cutkosky2018}, the lower bound of the growth rate of $W_t$ in our proposed test is as follows,
\begin{align}
    W_t&\geq \frac{1}{\sum_{s=1}^t (v^{(\delta_s)}_{s}(g_{\delta_s,s}))^2} \exp\left( \frac{\left( \sum_{s=1}^t v^{(\delta_s)}_{s}(g_{\delta_s,s}) \right)^2}{4 \left( \sum_{s=1}^t (v^{(\delta_s)}_{s}(g_{\delta_s,s}))^2 + \sum_{s=1}^t v^{(\delta_s)}_{s}(g_{\delta_s,s}) \right)} \right)\\
    &\geq \exp\left( \frac{t}{8} \left( \frac{1}{t} \sum_{s=1}^t v^{(\delta_s)}_{s}(g_{\delta_s,s}) \right)^2 - \log(t) \right).
\end{align}
One can notice that under the alternative, if $\delta_s=a^*$, the lower bound of the growth rate of $W_t$ is intuitively the fastest.

\subsection{Proof of lemmas in the paper}\label{a:useful_lemma}
The lemmas introduced in the paper are for the analysis of the expected stopping time, so it is sufficient only to consider the alternative is true. Hence, we may ignore the index $H_1$ in $\mathbb{P}_{H_1}$ and $\mathbb{E}_{H_1}$ for notational simplicity in these lemmas and their relatives.
\subsubsection{Proof of Lemma~\ref{le:at_least_logt}}\label{a:proof_of_at_least_logt}

    Recall $N_{k,t}:=\sum_{s=1}^{t}\mathbbm{1}\left\{\delta_s=k\right\}$. We have, $\forall\,k\in[K]$, and any finite $c_1>0$,
    \begin{align}
        \mathbb{P}\left(N_{k,t}<c_1\log(t)\right)&=\mathbb{P}\left(\sum_{s=1}^{t}\mathbbm{1}\left\{\delta_s=k\right\}<c_1\log(t)\right)\\
        &=\mathbb{P}\left(\sum_{s=1}^{t}\left(\mathbbm{1}\left\{\delta_s=k\right\}-P(\delta_s=k|F_{s-1})\right)<c_1\log(t)-\sum_{s=1}^{t}P(\delta_s=k|F_{s-1})\right),\label{eq:lemma1_1}
    \end{align}
    where \eqref{eq:lemma1_1} is by subtracting $\sum_{s=1}^{t}P(\delta_s=k|F_{s-1})$ on both sides. We will apply Lemma~\ref{le:bernstein} to propose an upper bound.
    
    For notational simplicity, we define $a_s:=\mathbbm{1}\left\{\delta_s=k\right\}-P(\delta_s=k|F_{s-1})$. It is easy to verify that $\left\{a_s, F_{s}\right\}_{s\ge0}$ is a martingale difference. Then, by our selection policy, 
     \begin{align}
        P(\delta_s=k|F_{s-1})=(1-\epsilon(s))\mathbbm{1}\left\{\hat{\mu}_{k,s-1}\geq\max_{m\neq k}\hat{\mu}_{m,{s-1}}\right\}+\epsilon(s)\frac{1}{K},
    \end{align}
    we know $P(\delta_s=k|F_{s-1})\geq\frac{1}{K}\epsilon(s)$, where $\epsilon(s):=\min\left\{1,C/s\right\}$, leading to
    \begin{align}
        \sum_{s=1}^{t}P(\delta_s=k|F_{s-1})\geq\frac{1}{K}\sum_{s=1}^{t}\epsilon(s)=\frac{1}{K}\left(C+\sum_{s=C+1}^{t}\frac{C}{s}\right)\geq\frac{C}{K}\left(\log(t)+1-\log(C)\right),
    \end{align}
    where the last inequality is due to $\sum_{s=C+1}^{t}1/s\geq \log(t)-\log(C)$ and $t\geq C+1$.
    Plugging the definition of $a_s$ and this upper bound into \eqref{eq:lemma1_1}, we have 
    \begin{align}
        \mathbb{P}\left(N_{k,t}<c_1\log(t)\right)&\leq\mathbb{P}\left(\sum_{s=1}^{t}a_s<c_1\log(t)-\frac{C}{K}\left(\log(t)+1-\log(C)\right)\right)\\
        &=\mathbb{P}\left(\sum_{s=1}^{t}a_s<\left(c_1-\frac{C}{K}\right)\log(t)-\frac{C}{K}\left(1-\log(C)\right)\right),\label{eq:lemma1_2}
    \end{align}
    where in \eqref{eq:lemma1_2}, we put $\log(t)$ together.
    Then, to apply Lemma~\ref{le:bernstein}, we know $|a_s|\leq 1$ for all $s\ge1$. It remains to ensure that the right-hand side of \eqref{eq:lemma1_2} is negative and to upper bound $\sum_{s=1}^{t}\mathbb{E}[a_s^2|F_{s-1}]$. 
    
    First of all, we set $C>Kc_1$ (we will provide the overall requirement of $C$ later) and have
    \begin{align}
        \sum_{s=1}^{t}\mathbb{E}[a_s^2|F_{s-1}]&=\sum_{s=1}^{t}\mathbb{E}[\mathbbm{1}\left\{\delta_s=k\right\}^2|F_{s-1}]-P(\delta_s=k|F_{s-1})^2\label{eq:lemma1_3}\\
        &=\sum_{s=1}^{t}\mathbb{E}[\mathbbm{1}\left\{\delta_s=k\right\}|F_{s-1}]-P(\delta_s=k|F_{s-1})^2\label{eq:lemma1_4}\\
        &=\sum_{s=1}^{t}P(\delta_s=k|F_{s-1})(1-P(\delta_s=k|F_{s-1}))\label{eq:lemma1_5}\\
        &\leq\sum_{s=1}^{t}\min\left\{P(\delta_s=k|F_{s-1}), 1-P(\delta_s=k|F_{s-1})\right\} \label{eq:lemma1_6}
    \end{align}
    where \eqref{eq:lemma1_3} is from the definition of $a_s$, \eqref{eq:lemma1_4} holds true since $\mathbbm{1}\{\delta_s=k\}$ is a binary value, \eqref{eq:lemma1_5} is from the definition of the expectation, and \eqref{eq:lemma1_6} is due to $ab\leq \min\left\{a,b\right\}$ if $a,b\in[0,1]$. Moreover, by our selection policy,
    we know $P(\delta_s=k|F_{s-1})$ is either $(1-(K-1)\epsilon(s)/K)$ or $\epsilon(s)/K$ at each time slot $s\ge1$, hence, $\min\left\{P(\delta_s=k|F_{s-1}), 1-P(\delta_s=k|F_{s-1})\right\}$ equals to
    \begin{align}
        \min\{(1-\frac{K-1}{K}\epsilon(s)), \frac{K-1}{K}\epsilon(s)\}\quad\text{or}\quad\min\{\frac{1}{K}\epsilon(s), 1-\frac{1}{K}\epsilon(s)\}.
    \end{align}
    Then, 
    by $\epsilon(s)=\min\{1,C/s\}\leq\frac{1}{2}\frac{K}{K-1}$, where we can assume $K\ge2$ for the heterogeneous data sources, we have
    \begin{align}
        \min\left\{P(\delta_s=k|F_{s-1}), 1-P(\delta_s=k|F_{s-1})\right\}\leq\frac{K-1}{K}\epsilon(s)\quad\forall\,s\ge1.
    \end{align}
    Therefore, we get 
    \begin{align}
        \sum_{s=1}^{t}\mathbb{E}[a_s^2|F_{s-1}]&\leq C+\sum_{s=C+1}^{t}\frac{C}{s}\frac{K-1}{K} \label{eq:lemma1_7}\\
        &\leq C+ \frac{C(K-1)}{K}(\log(t)-\log(C))\label{eq:lemma1_8}\\
        &\leq\frac{C(K-1)}{K}\log (t)+C\label{eq:lemma1_9},
    \end{align}
    where \eqref{eq:lemma1_7} is from the definition of $\epsilon(s)$, \eqref{eq:lemma1_8} is from $\sum_{s=C+1}^{t}1/s\leq\log(t)-\log(C)$ and $t\geq C+1$, and \eqref{eq:lemma1_9} holds if $C\ge1$ (we will set $C$ later).

    Then, we can apply Lemma~\ref{le:bernstein}. For notational simplicity, define $A:=\frac{C}{K} - c_1$ and $B:=\frac{C}{K}(\log(C)-1)$
    and $C>\max\{1,Kc_1\}$, then for all $t>\max\{C+1,\exp\left( \frac{ B}{ A}\right)\}$, i.e., $A\log(t)-B>0$,
    \begin{align}
        \mathbb{P}\left(N_{k,t}<c_1\log(t)\right)&\leq\mathbb{P}\left(\sum_{s=1}^{t}a_s<-(A\log(t)-B)\right)\\
        &\leq2\exp\left(-\frac{1}{2}\frac{(A\log(t)-B)^2}{\frac{C(K-1)}{K}\log (t)+C+\frac{A\log(t)-B}{3}}\right),
    \end{align}
    where the last inequality is from Lemma~\ref{le:bernstein}.
    
    In the following, we simplify the exponent. 
    For the numerator, $A\log(t)-B\geq\frac{A}{2}\log(t)$, $\forall\,t\ge \exp(2B/A)$. Moreover,  the upper bound of the denominator is derived as follows
    \begin{align}
        \frac{C(K-1)}{K}\log (t)+C+\frac{A\log(t)-B}{3}&\leq\frac{C(K-1)}{K}\log (t)+C+\frac{A}{3}\log(t)\label{eq:lemma1_11}\\
        &=\left(\frac{C(K-1)}{K}+\frac{A}{3}\right)\log(t)+C\label{eq:lemma1_12}\\
        &=\left(\frac{3CK-2C}{3K}-\frac{c_1}{3}\right)\log(t)+C\label{eq:lemma1_13}\\
        &\leq C\log(t)+C\label{eq:lemma1_14},
    \end{align}
    where \eqref{eq:lemma1_11} holds if $C>2$, i.e., $B>0$, in \eqref{eq:lemma1_12} we put $\log(t)$ together, \eqref{eq:lemma1_13} is from the definition of $A$, and \eqref{eq:lemma1_14} is from $C\geq0$ and $c_1\geq0$.
    Then, $\forall\,t\geq 2$, $C\leq C\log(t)$, leading to 
    \begin{align}
        \frac{C(K-1)}{K}\log (t)+C+\frac{A\log(t)-B}{3}\leq 2C\log(t).
    \end{align}
    
    Consequently, combining the above results, we have, $\forall\,t>\max\left\{C+1, 2, \exp(2B/A)\right\}$, $C>\max\{Kc_1, 2\}$,
    \begin{align}
        \mathbb{P}\left(N_{k,t}<c_1\log(t)\right)&\leq2\exp\left(-\frac{1}{2}\frac{\frac{A^2}{4} \log^2(t)}{2C\log(t)}\right)=2\exp\left(-\frac{A^2}{16C}\log(t)\right)=\frac{2}{t^{A^2/(16C)}}=\frac{2}{t^{(C/K - c_1)^2/16C}}.
    \end{align}
    Then, we further set $C \geq K(c_1 + 40K) + 4K \sqrt{5K(c_1 + 20K)}$, leading to 
    \begin{align}
        \mathbb{P}\left(N_{k,t}<c_1\log(t)\right)\leq\frac{2}{t^{5}}.
    \end{align}

    To conclude this proof, we get for any $c_1>0$, if $C \geq K(c_1 + 40K) + 4K \sqrt{5K(c_1 + 20K)}$, then
    \begin{align}
         \mathbb{P}\left(N_{k,t}<c_1\log(t)\right)\leq\frac{2}{t^{5}}\quad\forall\,t>T_1:=t>\max\left\{C+1, \exp(2B/A)\right\}.
    \end{align} 
    where $A:=\frac{C}{K} - c_1$ and $B:=\frac{C}{K}(\log(C)-1)$. One can notice that the $T_1\in O(C^{3})$. Moreover, this lemma only depends on the design of the exploration. Therefore, the inequality also holds for the null hypothesis. 

\subsubsection{Proof of Lemma~\ref{le:empirical_mean_error}}\label{sec:proof_of_empirical_mean_error}
The goal is to show that there exists a finite constant time $T_2>0$ such that 
\begin{align}
    \mathbb{P}(\max_{k\neq a^*}\hat{\mu}_{k,t}\geq\hat{\mu}_{a^*,t})\in O(\frac{K}{t^5}\frac{1}{r^2})\quad \forall\,t>T_2,
\end{align}
under the sublinear regret condition, $R^{(a^*)}_t\in O\left(\sqrt{N_{a^*,t}}\right)$.

Recall $r:=D^{(a^*)}_{\mathcal{G}}-\max_{k\neq a^*}D^{(k)}_{\mathcal{G}}$.
Since
\begin{align}
    \left\{\max_{k\neq a^*}\hat{\mu}_{k,t}\geq\hat{\mu}_{a^*,t}\right\}&=\left\{D^{(a^*)}_{\mathcal{G}}-\hat{\mu}_{a^*,t}+\max_{k\neq a^*}\hat{\mu}_{k,t}-\max_{k\neq a^*}D^{(k)}_{\mathcal{G}}\ge r\right\}\label{eq:lemma2_1}\\
    &\subseteq\left\{D^{(a^*)}_{\mathcal{G}}-\hat{\mu}_{a^*,t}\geq\frac{r}{2}\right\}\cup\left\{\max_{k\neq a^*}\hat{\mu}_{k,t}-\max_{k\neq a^*}D^{(k)}_{\mathcal{G}}\geq \frac{r}{2}\right\}\label{eq:lemma2_2}\\
    &\subseteq\left\{D^{(a^*)}_{\mathcal{G}}-\hat{\mu}_{a^*,t}\geq\frac{r}{2}\right\}\cup\left\{\cup_{k\neq a^*}\left\{\hat{\mu}_{k,t}-D^{(k)}_{\mathcal{G}}\geq\frac{r}{2}\right\}\right\},\label{eq:lemma2_3}
\end{align}
where \eqref{eq:lemma2_1} is from the definition of $r$, \eqref{eq:lemma2_2} is due to $\{a+b\geq r\}\subseteq\{a\geq r/2 \cup b\geq r/2\}$, and \eqref{eq:lemma2_3} is due to $\max_k(a_k-b_k)\geq\max_k a_k-\max_k b_k$ and the definition of $\text{max}$. Then, by union bound, we have 
\begin{align}
\mathbb{P}(\max_{k\neq a^*}\hat{\mu}_{k,t}\geq\hat{\mu}_{a^*,t})\leq\mathbb{P}\left(D^{(a^*)}_{\mathcal{G}}-\hat{\mu}_{a^*,t}\geq\frac{r}{2}\right)+\sum_{k\neq a^*}\mathbb{P}\left(\hat{\mu}_{k,t}-D^{(k)}_{\mathcal{G}}\geq\frac{r}{2}\right).
\end{align}
Consequently, by Lemma~\ref{le:deviation_of_esti_source1} and Lemma~\ref{le:deviation_of_esti_source2},
if $\exists M>0$ such that
\begin{align}
    \frac{R^{(a^*)}_t}{N_{a^*,t}}\leq \frac{M}{\sqrt{N_{a^*,t}}}\quad\text{almost surely},
\end{align}
we have
\begin{align}
    \mathbb{P}(\max_{k\neq a^*}\hat{\mu}_{k,t}\geq\hat{\mu}_{a^*,t})&\leq \frac{K}{1-\exp\left\{-\frac{r^2}{512}\right\}}\frac{1}{t^{5}}+\frac{2K}{t^{5}}\\
    &\leq \frac{K}{t^5}(1+\frac{512}{r^2})+\frac{2K}{t^5}\quad\forall t\geq \max\left\{T_1, T_{2,1}\right\}\label{eq:lemma2_4}
\end{align}
where \eqref{eq:lemma2_4} is due to $1/(1-\exp(-x))\leq1+1/x$ for $x>0$, and $T_1$ is defined in Appendix~\ref{a:proof_of_at_least_logt} with $c_1\geq\frac{2560}{r^2}$. Therefore, we have $T_2:=\max\left\{T_1, T_{2,1}\right\}$ with $T_{2,1}:=\exp(16M^2/(r^2 c_1))$. 

\subsubsection{Proof of Lemma~\ref{le:sub-opt_source_is_drawn_at_most_n0.5}}\label{a:sub-opt source is drawn at most sqrt{n}}
The goal is to show that there exists a finite $T_3>0$ such that $\forall\,t\geq T_3$,
\begin{align}
    \mathbb{P}\left(N_{k,t}>\sqrt{t}\right)\in O(\frac{K}{t^2}\cdot\frac{1}{r^2}),
\end{align}

    For any $k\neq a^*$, let us define 
    $a_s=\mathbbm{1}\left\{\delta_s=k\right\}-P(\delta_s=k|F_{s-1})$ and from Appendix~\ref{a:proof_of_at_least_logt}, it is a martingale process adapted to $\left\{F_{s}\right\}_{s\ge1}$. By the definition of $N_{k,t}$, we have 
    \begin{align}
        \mathbb{P}\left(N_{k,t}>\sqrt{t}\right)&\leq\mathbb{P}\left(\sum_{s=1}^{t}a_s>\sqrt{t}-\sum_{s=1}^{t}P(\delta_s=k|F_{s-1})\right).
    \end{align}
    Then, by our selection policy, $P(\delta_s=k|F_{s-1})=(1-\epsilon(s))\mathbbm{1}\left\{\hat{\mu}_{k,s-1}\geq\max_{m\neq k}\hat{\mu}_{m,s-1}\right\}+\epsilon(s)/K$ for any $k\in[K]$, $s\ge1$, 
    we have 
    \begin{align}
        \sum_{s=1}^{t}P\left(\delta_s=k|F_{s-1}\right)&\leq\sum_{s=1}^{t}\mathbbm{1}\left\{\hat{\mu}_{k,s-1}\geq\max_{m\neq k}\hat{\mu}_{m,s-1}\right\}+\sum_{s=1}^{t}\epsilon(s)/K\\
        &\leq\sum_{s=1}^{t}\mathbbm{1}\left\{\hat{\mu}_{k,s-1}\geq\max_{m\neq k}\hat{\mu}_{m,s-1}\right\}+C/K+C\log(t)/K\label{eq:lemma3_1},
    \end{align}
    where \eqref{eq:lemma3_1} is due to the definition of $\epsilon(s)$ and $\sum_{s=C+1}^{t}1/s\leq\log(t)$ ($t\geq C+1$ and $C>1$ are required). Hence, we have 
    \begin{align}
        \mathbb{P}\left(N_{k,t}>\sqrt{t}\right)&\leq\mathbb{P}\left(\sum_{s=1}^{t}a_s>\sqrt{t}-\left(\sum_{s=1}^{t}\mathbbm{1}\left\{\hat{\mu}_{k,s-1}\geq\max_{m\neq k}\hat{\mu}_{m,s-1}\right\}+C/K+C\log(t)/K\right)\right)\\
        &=\mathbb{P}\left(\sum_{s=1}^{t}a_s>\sqrt{t}-E_t\right),\label{eq:lemma3_2}
    \end{align}
    where $E_t:=\sum_{s=1}^{t}\mathbbm{1}\left\{\hat{\mu}_{k,s-1}\geq\max_{m\neq k}\hat{\mu}_{m,s-1}\right\}+C/K+C\log(t)/K$ in \eqref{eq:lemma3_2}.
    Then, we have
    \begin{align}
        \mathbb{P}\left(N_{k,t}>\sqrt{t}\right)&\leq\mathbb{P}\left(\sum_{s=1}^{t}a_s>\sqrt{t}-E_t, E_t<\frac{1}{2}\sqrt{t}\right)+\mathbb{P}(E_t\geq \frac{1}{2}\sqrt{t})\label{eq:lemma3_3}\\
        &\leq\mathbb{P}\left(\sum_{s=1}^{t}a_s>\frac{1}{2}\sqrt{t}\right)+\frac{64K}{t^2}\left(\frac{1}{1-\exp\left\{-\frac{r^2}{512}\right\}}+2\right)\quad\forall\,t\geq\max\{T_2+1, T_{3,1}\}\label{eq:lemma3_4}\\
        &\leq\mathbb{P}\left(\sum_{s=1}^{t}a_s>\frac{1}{2}\sqrt{t}\right)+\frac{64K}{t^2}\left(3+\frac{512}{r^2}\right)\label{eq:lemma3_5},
    \end{align}
    where \eqref{eq:lemma3_3} is similar to \eqref{eq:lemma7_3}, \eqref{eq:lemma3_4} is from Lemma~\ref{le:numer_of_mis-orering_is_sublinear} with $T_2$ defined in Appendix~\ref{sec:proof_of_empirical_mean_error} and $T_{3,1}$ defined in Lemma~\ref{le:numer_of_mis-orering_is_sublinear}, and \eqref{eq:lemma3_5} is due to $1/(1-\exp(-x))\leq1+1/x$ for $x>0$.
    
    For the first term in\eqref{eq:lemma3_5}, following a similar argument in Appendix~\ref{a:proof_of_at_least_logt}, we have 
    \begin{align}
        \mathbb{P}\left(\sum_{s=1}^{t}a_s>\frac{1}{2}\sqrt{t}\right)&\leq 2\exp\left(-\frac{1}{2}\frac{t/4}{C\log(t)/2+C+\sqrt{t}/6}\right)\\
        &\leq2\exp\left(-\frac{1}{2}\frac{t/4}{\sqrt{t}/3}\right)\quad \forall\,t\geq 144C^2\log^2(12C)\\
        &=2\exp\left(-\frac{3}{8}\sqrt{t}\right).
    \end{align}
    Finally, by defining $T_3:=\max\left\{T_2+1, T_{3,1},144C^2\log^2(12C)\right\}$ and noticing that $\exp\left(-3\sqrt{t}/8\right)$ is faster decaying than $O(1/t^2)$, we can combine the above results and complete the proof.

\subsection{Proof of Theorem~\ref{thm:main_thm}}\label{a:proof_of_main_thm}
We combine our active selection strategy with the ideas in \cite{shekhar2025nonparametrictwosampletestingbetting} to prove Theorem~\ref{thm:main_thm}. Therefore, we will use the lemmas introduced in \ref{a:useful_lemma} to control the selection behavior. 
\begin{remark}
    For the analysis of the level-$\alpha$ property, all data samples are drawn from the null hypothesis $H_0$. For the analysis of the power-one property and the expected stopping time, all the lemmas in them are analyzed under the alternative hypothesis $H_1$. Hence, we may ignore the index $H_1$ in $\mathbb{P}_{H_1}$ and $\mathbb{E}_{H_1}$ for notational simplicity.
\end{remark}

\subsubsection{Proof of Level-$\alpha$ property}\label{a:level_alpha}
The goal is to show: for any $\left\{\mathcal{A}^{(k)}_{\text{pred}}\right\}_{k\in[K]}$, $\mathcal{A}_{\text{bet}}$, and selection policy defined in Definition~\ref{def:ASN2T}, given a predefined accuracy $\alpha\in(0,1)$, 
            \begin{align}
            \mathbb{P}_{H_0}(\tau^{(\text{ASN2T})}<\infty)\leq\alpha\quad\forall\,H_0.
            \end{align}

    We only need to show that the wealth process $\left\{W_t\right\}$ is a nonnegative supermartingale (or martingale) and apply Ville's inequality. $\forall\,H_0$, we have
    \begin{align}
        \mathbb{E}_{H_0}[W_{t}|F_{t-1}] &= W_{t-1} +W_{t-1}\mathbb{E}_{H_0}\left[\lambda_{t}v_{t}^{(\delta_t)}(g_{\delta_t,t})\,|\,F_{t-1}\right]\\
        &=W_{t-1}+W_{t-1}\sum_{k\in[K]}\mathbb{E}_{H_0}\left[\lambda_{t}(g_{k,t}(X_{k,1}(t))-g_{k,t}(X_{k,2}(t)))\,|\,F_{t-1},\delta_{t}=k\right]P(\delta_{t}=k\,|\,F_{t-1}) \label{eq:level_alpha_1}\\
        &=W_{t-1}\label{eq:level_alpha_2},
    \end{align}
    where \eqref{eq:level_alpha_1} is by the definition of $v^{(k)}_{t}(g)$ in \eqref{eq:v} and \eqref{eq:level_alpha_2} is due to the fact that $(X_{\delta_t,1}(t), X_{\delta_t,2}(t))\sim P_{\delta_t,1}\times P_{\delta_t,2}$, and $\lambda_{t}$ and $g_{k,t}$ are $F_{t-1}$ measurable, and under the null hypothesis, $X_{k,1}$ and $X_{k,2}$ have the same distribution for all $k\in[K]$. 
    
    Therefore, $\{W_t\}_{t\geq0}$ is a martingale. Then, since the range of $g_{k,t}$ for each $k\in[K]$ is in $[-1/2,1/2]$ and $\lambda_t\in[-1,1]$, $W_t\geq0$ for any $t\geq0$. That is, $\{W_t\}_{t\geq0}$ is a non-negative martingale since $1+\lambda_t v_{t}^{(\delta_t)}(g_{\delta_t,t})\geq0$.

    Consequently, we can apply Ville's inequality, leading to 
    \begin{align}
        \mathbb{P}_{H_0}(\tau^{(\text{ASN2T})}<\infty) = \mathbb{P}_{H_0}(\exists t\geq1:W_t\geq\frac{1}{\alpha})\leq \alpha,
    \end{align}
    where the last inequality is due to Ville's inequality.

\subsubsection{Proof of power-one property}\label{a:power_one_error_ctrl}
The goal is to show that, if almost surely,
            \begin{align}
                \limsup_{t\rightarrow\infty}\left(\frac{R^{(a^*)}_{t}}{t}-\frac{1}{t}\sum_{s=1}^{t}D^{(a^*)}_{\mathcal{G}}\mathbbm{1}\left\{\delta_s=a^*\right\}\right)<0,
            \end{align}
            then
            \begin{equation}
                \mathbb{P}_{H_1}(\tau^{(\text{ASN2T})}<\infty)=1\quad\forall H_1.
            \end{equation}
    Equivalently, let us prove $\mathbb{P}_{H_1}(\tau=\infty)=0$. By the property of the stopping time,
    \begin{align}
        \mathbb{P}_{H_1}(\tau^{(\text{ASN2T})}=\infty)\leq\mathbb{P}_{H_1}(\tau^{(\text{ASN2T})}>t)\quad\forall\,t\ge1,
    \end{align}
    we have $\mathbb{P}_{H_1}(\tau^{(\text{ASN2T})}=\infty)\leq\liminf_{t\rightarrow\infty}\mathbb{P}_{H_1}(\tau^{(\text{ASN2T})}>t)$.
    Then, we proceed to show $\liminf_{t\rightarrow\infty}\mathbb{P}_{H_1}(\tau>t)=0$.
    
    By the definition of $\tau^{(\text{ASN2T})}$, we have 
    \begin{align}
        \mathbb{P}_{H_1}(\tau^{(\text{ASN2T})}>t)\leq\mathbb{P}_{H_1}(W_{t}\leq\frac{1}{\alpha}).
    \end{align}
    Then, by Appendix~\ref{a:ons_betting}, we have  $W_t\geq\exp\left( \frac{t}{8} \left( \frac{1}{t} \sum_{s=1}^t v^{(\delta_s)}_{s}(g_{\delta_s,s}) \right)^2 - \log(t) \right)$. Plugging this into the above inequality, we have
    \begin{align}
        \mathbb{P}_{H_1}(W_{t}\leq\frac{1}{\alpha})&\leq\mathbb{P}_{H_1}\left(\exp\left( \frac{t}{8} \left( \frac{1}{t} \sum_{s=1}^t v^{(\delta_s)}_{s}(g_{\delta_s,s}) \right)^2 - \log(t) \right)\leq\frac{1}{\alpha}\right)\\
        &=\mathbb{P}_{H_1}\left(\frac{1}{t}\sum_{s=1}^{t}v^{(\delta_s)}_{s}(g_{\delta_s,s})\leq\sqrt{\frac{8(\log(1/\alpha)+\log(t))}{t}}\right)\label{eq:power_one_1},
    \end{align}
    where \eqref{eq:power_one_1} is rearrangement.
    
    Following this, by $b_t:=\sqrt{\frac{8(\log(1/\alpha)+\log(t))}{t}}$, we have,
    \begin{align}
        &\left\{\frac{1}{t}\sum_{s=1}^{t}v^{(\delta_s)}_{s}(g_{\delta_s,s})\leq b_t\right\}\\
        &=\left\{\frac{1}{t}\sum_{s=1}^{t}v^{(a^*)}_{s}(g_{a^*,s})\mathbbm{1}\{\delta_s=a^*\}+\frac{1}{t}\sum_{s=1}^{t}\sum_{k\neq a^*}v^{(k)}_{s}(g_{k,s})\mathbbm{1}\{\delta_s=k\}\leq b_t\right\}\\
        &\subseteq\left\{\frac{1}{t}\sum_{s=1}^{t}v^{(a^*)}_{s}(g_{a^*,s})\mathbbm{1}\left\{\delta_s=a^*\right\}\leq b_t+\sum_{k\neq a^*}\frac{N_{k,t}}{t},\cap_{k\neq a^*}\left\{N_{k,t}\leq\sqrt{t}\right\}\right\}\cup\left\{\cup_{k\neq a^*}\left\{N_{k,n}>\sqrt{n}\right\}\right\}\label{eq:power_one_2}\\
        &\subseteq\left\{\frac{1}{t}\sum_{s=1}^{t}v^{(a^*)}_{s}(g_{a^*,s})\mathbbm{1}\left\{\delta_s=a^*\right\}\leq b_t+\frac{K-1}{\sqrt{t}}\right\}\cup\left\{\cup_{k\neq a^*}\left\{N_{k,t}>\sqrt{t}\right\}\right\}\label{eq:power_one_3}\\
        &=\left\{\frac{1}{t}\sup_{g_{a^*}\in\mathcal{G}}\sum_{s=1}^{t}(g_{a^*}(X_{a^*,1}(s))-g_{a^*}(X_{a^*,2}(s)))\mathbbm{1}\left\{\delta_s=a^*\right\}\leq\frac{R^{(a^*)}_t}{t}+\frac{K-1}{\sqrt{t}}+b_t\right\}\cup\left\{\cup_{k\neq a^*}\left\{N_{k,t}>\sqrt{t}\right\}\right\}\label{eq:power_one_4}\\
        &\subseteq\left\{\frac{1}{t}\sum_{s=1}^{t}(g^*_{a^*}(X_{a^*,1}(s))-g^*_{a^*}(X_{a^*,2}(s)))\mathbbm{1}\left\{\delta_s=a^*\right\}\leq\frac{R^{(a^*)}_t}{t}+\frac{K-1}{\sqrt{t}}+b_t\right\}\cup\left\{\cup_{k\neq a^*}\left\{N_{k,t}>\sqrt{t}\right\}\right\}\label{eq:power_one_5}\\
        &=\left\{\frac{1}{t}\sum_{s=1}^{t}a_s\leq\frac{R^{(a^*)}_t}{t}-\frac{1}{t}\sum_{s=1}^{t}D^{(a^*)}_{\mathcal{G}}\mathbbm{1}\left\{\delta_s=a^*\right\}+\frac{K-1}{\sqrt{t}}+b_t\right\}\cup\left\{\cup_{k\neq a^*}\left\{N_{k,t}>\sqrt{t}\right\}\right\}\label{eq:power_one_6},
    \end{align}
    where \eqref{eq:power_one_2} is due to $|v^{k}_{s}(g)|\leq 1$ for all $k\in[K], g\in\mathcal{G}, s\ge1$, the definition of $N_{k,t}$, and the set operation in \eqref{eq:lemma7_3}, in \eqref{eq:power_one_3} we apply the fact that $\{N_{k,t}\leq \sqrt{t}\}$ for all $k\neq a^*$ for the first term, \eqref{eq:power_one_4} is due to the definition of the individual regret defined in \eqref{eq:individual_regret}, \eqref{eq:power_one_5} is due to the definition of supremum, and in \eqref{eq:power_one_6} we denote $a_s:=(g^*_{a^*}(X_{a^*,1}(s))-g^*_{a^*}(X_{a^*,2}(s))-D^{(a^*)}_{\mathcal{G}})\mathbbm{1}\left\{\delta_s=a^*\right\}$ for all $s\ge1$, which is a martingale difference.

    Therefore, plugging \eqref{eq:power_one_6} into \eqref{eq:power_one_1}, by union bound, we have 
    \begin{align}
        \mathbb{P}_{H_1}(W_{t}\leq\frac{1}{\alpha})\leq\mathbb{P}_{H_1}\left(\frac{1}{t}\sum_{s=1}^{t}a_s\leq\frac{R^{(a^*)}_t}{t}-\frac{1}{t}\sum_{s=1}^{t}D^{(a^*)}_{\mathcal{G}}\mathbbm{1}\left\{\delta_s=a^*\right\}+\frac{K-1}{\sqrt{t}}+b_t\right)+\sum_{k\neq a^*}\mathbb{P}_{H_1}(N_{k,t}>\sqrt{t})\label{eq:power_one_7}.
    \end{align}
    For the second term in \eqref{eq:power_one_7}, by Lemma~\ref{le:sub-opt_source_is_drawn_at_most_n0.5}, we get $\lim_{t\rightarrow\infty}\sum_{k\neq a^*}\mathbb{P}_{H_1}(N_{k,t}>\sqrt{t})$=0. Then, by Fatou's lemma, \eqref{eq:power_one_7} turns to $\liminf_{t\rightarrow\infty}\mathbb{P}_{H_1}(W_{t}\leq\frac{1}{\alpha})\leq\lim_{t\rightarrow\infty}\mathbb{E}_{H_1}[\mathbbm{1}\left\{A_t\right\}]$, where 
    \begin{equation}
        A_t:=\left\{F_t:\frac{1}{t}\sum_{s=1}^{t}a_s\leq\frac{R^{(a^*)}_t}{t}-\frac{1}{t}\sum_{s=1}^{t}D^{(a^*)}_{\mathcal{G}}\mathbbm{1}\left\{\delta_s=a^*\right\}+\frac{K-1}{\sqrt{t}}+b_t\right\}.
    \end{equation}
    
    We then show, almost surely, $\lim_{t\rightarrow\infty}\mathbbm{1}\left\{A_t\right\}=0$. Since $\sum_{t=1}^{\infty}\frac{\mathbb{E}[|a_t|^2]}{t^2}<\infty$ due to $\mathbb{E}_{H_1}[|a_t|^2]<\infty$, and $\{a_s\}_{s\ge0}$ is a martingale difference, by \cite[Theorem 2.15]{hall2014}, we have $\frac{1}{t}\sum_{s=1}^{t}a_s$ almost surely converges to $0$. Moreover, by the assumption \eqref{eq:condition_for_power-one},
    we have $\limsup_{n\rightarrow\infty}\text{LHS}\geq 0> \limsup_{n\rightarrow\infty}\text{RHS}$, where
    \begin{align}
        \text{LHS}:=\frac{1}{t}\sum_{s=1}^{t}a_s,\quad\text{RHS}:=\frac{R^{(a^*)}_t}{t}-\frac{1}{t}\sum_{s=1}^{t}D^{(a^*)}_{\mathcal{G}}\mathbbm{1}\left\{\delta_s=a^*\right\}+\frac{K-1}{\sqrt{t}}+b_t,
    \end{align}
    leading to $\mathbbm{1}\left\{A_t\right\}\overset{a.s.}{\rightarrow}0$. Therefore, by the bounded convergence theorem, we have $\lim_{t\rightarrow\infty}\mathbb{E}_{H_1}[\mathbbm{1}\left\{A_t\right\}]=0$. Hence, $\mathbb{P}_{H_1}(\tau=\infty)=0$ holds true. Proof completes.

\subsubsection{Proof of expected stopping time}\label{a:proof_of_expect_stop_time}
For the expected stopping time, we make some notations in advance, for all $k\in[K]$,
\begin{align}
    &\tilde{\sigma}^2_k:=\sup_{g_k\in\mathcal{G}}\mathbb{E}[(g_k(X_{k,1})-g_k(X_{k,2}))^2],\quad
    \gamma_k=\sup_{g_k\in\mathcal{G}}\text{Var}((g_k(X_{k,1})-g_k(X_{k,2}))^2).
\end{align}

The goal is to show that, if there exists $\left\{r_{t}\right\}_{t\ge1}$ and $E_{t,a^*}:=\left\{R^{(a^*)}_{t}/t\leq r_{t}\right\}$, then
                \begin{align}
                    \mathbb{E}_{H_1}[\tau^{(\text{ASN2T})}]\in O(t_0+\sum_{t\geq 1}\mathbb{P}(E^c_{t,a^*})+T_{\text{explore}})
                \end{align}
                where $T_{\text{explore}}$ is proportional to $K$ and $1/L$, and
                \begin{align}
                    t_0:=\inf\left\{t\ge1:D^{(a^*)}_{\mathcal{G}}\geq r_{t}+\sigma_{a^*}\sqrt{\frac{\log\frac{t}{\alpha}}{t}}+\frac{\log \frac{t}{\alpha}}{t}\right\},\quad\sigma_{k}:=\sqrt{\sup_{g_{k}}\text{Var}(g_{k}(X_{k,1})-g_{k}(X_{k,2}))}\quad\forall\,k\in[K].
                \end{align}
    The proof of the upper bound of the expected stopping time basically follows the steps in \cite[Appendix C.4]{shekhar2025nonparametrictwosampletestingbetting}. However, in our case, we need to control the action-taking behavior since the lemmas in Appendix~\ref{a:useful_lemma} will be used.

    Let us start from a common representation of the expected stopping time. In the following, we consider the alternative to be true and ignore the index $H_1$ in the notations of expectation and probability. Recall that
    \begin{align}
        \tau^{(\text{ASN2T})}:=\inf\left\{t\ge1:W_t\geq1/\alpha\right\},\quad\text{where}\ W_t=\Pi_{s=1}^{t}(1+\lambda_s v^{(\delta_s)}_{s}(g_{\delta_s,s})).\nonumber
    \end{align}
    We have
    \begin{align}
        \mathbb{E}[\tau^{(\text{ASN2T})}]&=\sum_{t=0}^{\infty}\mathbb{P}(\tau^{(\text{ASN2T})}>t) 
        \leq\sum_{t=0}^{\infty}\mathbb{P}(\log(W_t)<\log(1/\alpha)).
    \end{align}
    
    We proceed to show the upper bound of $\mathbb{P}(\log(W_t)<\log(1/\alpha))$. By the fact of ONS betting strategy \cite[Appendix C.4]{shekhar2025nonparametrictwosampletestingbetting},
    \begin{align}
        \log(W_t)\geq\sup_{\lambda\in[-1/2, 1/2]}\sum_{s=1}^{t}\log(1+\lambda v^{(\delta_s)}_{s}(g_{\delta_s,s}))-12\log(t)  ,  
    \end{align}
    we know 
    \begin{align}
        \mathbb{P}(\log(W_t)<\log(1/\alpha))&\leq\mathbb{P}\left(\sup_{\lambda\in[-1/2, 1/2]}\sum_{s=1}^{t}\log(1+\lambda v^{(\delta_s)}_{s}(g_{\delta_s,s})))-12\log(t)<\log(1/\alpha)\right)\\
        &=\mathbb{P}(A_t),
    \end{align}
    where
    \begin{align}
        A_t:=\left\{\sup_{\lambda\in[-1/2, 1/2]}\sum_{s=1}^{t}\log(1+\lambda v^{(\delta_s)}_{s}(g_{\delta_s,s})))-12\log(t)<\log(1/\alpha)\right\}.
    \end{align}
    Then, let us define $B_t:=\left\{\lvert\sum_{s=1}^{t}v^{(\delta_s)}_{s}(g_{\delta_s,s}))\rvert\leq\sum_{s=1}^{t}(v^{(\delta_s)}_{s}(g_{\delta_s,s}))^2\right\}$ and get 
    \begin{align}
        A_t=(A_t\cap B_t)\cup(A_t\cap B_t^c).
    \end{align}
    For $(A_t\cap B_t)$, we follow the first bullet point in \cite[Appendix C.4]{shekhar2025nonparametrictwosampletestingbetting} and get  
    \begin{align}
        (A_t\cap B_t)&\subseteq\left\{\frac{1}{4}\left(\sum_{s=1}^{t}v_s^{(\delta_s)}(g_{\delta_s,s})\right)^2/\sum_{s=1}^{t}(v_s^{(\delta_s)}(g_{\delta_s,s}))^2<12\log(\frac{t}{\alpha})\right\}\cap B_t\\
        &=\left\{\left(\frac{1}{t}\sum_{s=1}^{t}v_s^{(\delta_s)}(g_{\delta_s,s})\right)^2<\frac{48}{t}\sum_{s=1}^{t}(v_s^{(\delta_s)}(g_{\delta_s,s}))^2\cdot\frac{1}{t} \log(\frac{t}{\alpha})\right\}\cap B_t:=D_{t,1}.
    \end{align}
For $(A_t\cap B_t^c)$, we follow the second bullet point in \cite[Appendix C.4]{shekhar2025nonparametrictwosampletestingbetting}. Let us define two disjoint sets $C_{t,1}$ and $C_{t,2}$ such that $B_t^c:=C_{t,1}\cup C_{t,2}$ and 
\begin{align}
    C_{t,1}:=\left\{\sum_{s=1}^{t}v^{(\delta_s)}_{s}(g_{\delta_s,s})\geq\sum_{s=1}^{t}(v^{(\delta_s)}_{s}(g_{\delta_s,s}))^2\right\}\quad\text{and}\quad C_{t,2}:=\left\{\sum_{s=1}^{t}v^{(\delta_s)}_{s}(g_{\delta_s,s})\leq-\sum_{s=1}^{t}(v^{(\delta_s)}_{s}(g_{\delta_s,s}))^2\right\}.
\end{align}
Then, taking $\lambda=1/2$ in $A_t$ and $\log(1+\lambda v^{(\delta_s)}_{s}(g_{\delta_s,s}))\geq\lambda v^{(\delta_s)}_{s}(g_{\delta_s,s})-\lambda^2(v^{(\delta_s)}_{s}(g_{\delta_s,s}))^2$, we have 
\begin{align}
    (A_t\cap C_{t,1})&\subseteq\left\{\frac{1}{2}\sum_{s=1}^{t}v^{(\delta_s)}_{s}(g_{\delta_s,s})-\frac{1}{4}\sum_{t=1}^{n}(v^{(\delta_s)}_{s}(g_{\delta_s,s}))^2<12\log\frac{t}{\alpha}\right\}\cap C_{t,1}\\
    &\subseteq\left\{\frac{1}{4}\sum_{s=1}^{t}v^{(\delta_s)}_{s}(g_{\delta_s,s})<12\log\frac{t}{\alpha}\right\}\label{eq:exp_time_1}\\
    &=\left\{\frac{1}{t}\sum_{s=1}^{t}v^{(\delta_s)}_{s}(g_{\delta_s,s})<\frac{48}{t}\log\frac{t}{\alpha}\right\}:=D_{t,2},
\end{align}
where \eqref{eq:exp_time_1} is from the fact of $C_{t,1}$,
and
\begin{align}
    (A_t\cap C_{t,2})\subseteq C_{t,2}\subseteq\left\{\frac{1}{t}\sum_{s=1}^{t}v^{(\delta_s)}_{s}(g_{\delta_s,s})<0\right\}:=D_{t,3}.
\end{align}
Therefore, we have 
\begin{align}
    \mathbb{P}(\tau^{(\text{ASN2T})}>t)\leq\mathbb{P}(A_t)\leq\mathbb{P}(D_{t,1})+\mathbb{P}(D_{t,2})+\mathbb{P}(D_{t,3})\label{eq:exp_time_2},
\end{align}

Then, by \eqref{eq:exp_time_2}, the expected stopping time is upper-bounded by
\begin{align}
    \mathbb{E}[\tau^{(\text{ASN2T})}]\leq 1+\sum_{t\ge1}\mathbb{P}(D_{t,1})+\mathbb{P}(D_{t,2})+\mathbb{P}(D_{t,3}).
\end{align}

As a result, by Lemma~\ref{le:upper_bound_of_p(d_n2)}, Lemma~\ref{le:upper_bound_of_p(d_n3)}, and Lemma~\ref{le:upper_bound_of_p(d_n1)}, we have, 
\begin{align}
    \mathbb{E}[\tau^{\text{ASN2T}}]&\leq T+T_3+\sum_{t\geq T+T_3}3\mathbb{P}(E^c_{t})+3\mathbb{P}(E^c_{t,a^*})+3\mathbb{P}(G^c_{t,1})+\mathbb{P}(G^c_{t,2})
\end{align}
where $T:=\max\left\{t_1,t_2,t_3,t_4\right\}$, $t_1,t_2,t_3,t_4$ are defined in Lemma~\ref{le:upper_bound_of_p(d_n2)}, Lemma~\ref{le:upper_bound_of_p(d_n3)}, and Lemma~\ref{le:upper_bound_of_p(d_n1)}, and $T_3$ is defined in Appendix~\ref{a:sub-opt source is drawn at most sqrt{n}}. 

Consequently, by \cite[Lemma 3]{shekhar2025nonparametrictwosampletestingbetting}, we know $T\in O(t_0)$ where $t_0$ is defined in Theorem~\ref{thm:main_thm}, and with the fact that $T_{\text{explore}}\equiv T_3\in \tilde{O}(C^3)\asymp\tilde{O}\left(\frac{K^3}{L^6}\right)$ and $3\sum_{t\geq T_3}\left(\mathbb{P}(E^c_{t})+\mathbb{P}(G^c_{t,1})+\mathbb{P}(G^c_{t,2})\right)\in O(1)$, we get the desired result.

\subsection{Proof of Corollary~\ref{co:comparison}}\label{a:proof_of_comparison}
The goal is to show that
if the passive setting \cite{shekhar2025nonparametrictwosampletestingbetting} uses the sub-optimal data source and $\sigma_{a^*}^2\leq\max_{k\neq a^*}\sigma_{k}^2$,  
then
\begin{align}
        \mathbb{E}[\tau^{\text{ASN2T}}]\leq\mathbb{E}[\tau^{(p)}]+O\left(\frac{K^2}{r^2}(\log(1/\alpha))^{2/3}\right),
    \end{align}
    where $\tau^{(p)}$ denotes the stopping time in the general test proposed in \cite{shekhar2025nonparametrictwosampletestingbetting}.
    Asymptotically,  
    \begin{align}
        \lim_{\alpha\rightarrow0}\frac{\mathbb{E}[\tau^{\text{ASN2T}}]}{\log(1/\alpha)}\leq\lim_{\alpha\rightarrow0}\frac{\mathbb{E}[\tau^{(p)}]}{\log(1/\alpha)}.
    \end{align}

    The ground truth hypothesis is $H_1$ in this proof. Let us compare $t_1$, $t_2$, $t_3$ defined in the proof of the expected stopping time, Appendix~\ref{a:proof_of_expect_stop_time}, with those in the passive setting (we do not need to compare $t_4$ since they are the same). Assume in the passive setting, they use any data source $k$ where $k\neq a^*$ and assume $\sigma_{a^*}\leq \sigma_{k}$.
For $t_2^{(p)}$, representing the \emph{passive} version of $t_2$, from \cite{shekhar2025nonparametrictwosampletestingbetting}, we know
\begin{align}
    t_2^{(p)}=\inf\left\{t\ge1:\frac{D^{(k)}_{\mathcal{G}}}{2}\geq\frac{48}{t}\log\frac{t}{\alpha}+\sigma_{k}\sqrt{\frac{4\log(t)}{t}}+\frac{2\log(t)}{3t}+r_{t}\right\},
\end{align}
where $\sigma^2_k$ is the corresponding variance of that source.
Recall the definition of $t_2$ in this paper
\begin{align}
    t_2:=\inf\left\{t\ge1:\frac{D^{(a^*)}_{\mathcal{G}}}{2}\geq\frac{K-1}{\sqrt{t}}+\frac{48}{t}\log\frac{t}{\alpha}+\sigma_{a^*}\sqrt{\frac{4\log(t)}{t}}+\frac{2\log(t)}{3t}+r_t\right\}.
\end{align}
Therefore, we have $t_2= t_2^{(p)}+O(\frac{4(K-1)^2}{r^2})$.

Similarly, for $t_3^{(p)}$ defined as 
\begin{align}
    t_3^{(p)}=\inf\left\{t\ge1:\frac{D^{(k)}_{\mathcal{G}}}{2}\geq\sigma_{k}\sqrt{\frac{4\log(t)}{t}}+\frac{2\log(t)}{3t}+r_{t}\right\}. 
\end{align}
Recall the definition of $t_3$ in this paper
\begin{align}
    t_3:=\inf\left\{t\ge1:\frac{D^{(a^*)}_{\mathcal{G}}}{2}\geq\frac{K-1}{\sqrt{t}}+\sigma_{a^*}\sqrt{\frac{4\log(t)}{t}}+\frac{2\log(t)}{3t}+r_t\right\}.
\end{align}
Therefore, we also have $t_3= t_3^{(p)}+O(\frac{4(K-1)^2}{r^2})$.

Finally, for $t_1^{(p)}$ defined as 
\begin{align}
    t_1^{(p)}:=\inf\left\{t\ge1:\frac{D^{(k)}_{\mathcal{G}}}{2}\geq r_{t}+9\sigma_{k}\sqrt{\frac{2\log(t/\alpha)}{t}}+7\sqrt{2\tilde{\sigma}_k}\Big(\frac{\log\frac{t}{\alpha}}{t}\Big)^{3/4}+\frac{8\log\frac{t}{\alpha}}{t}\right\},
\end{align}
recall the definition of $t_1$ \eqref{eq:fine_bound_of_n_1},
\begin{align}
    t_1:=\inf\left\{t\ge1:\frac{D^{(a^*)}_{\mathcal{G}}}{2}\geq r_t+\frac{K-1}{\sqrt{t}}+7\frac{\sqrt{(K-1)\log\frac{t}{\alpha}}}{t^{3/4}}+9\sigma_{a^*}\sqrt{\frac{2\log(t/\alpha)}{t}}+7\sqrt{2\tilde{\sigma}_{a^*}}\Big(\frac{\log\frac{t}{\alpha}}{t}\Big)^{3/4}+\frac{8\log\frac{t}{\alpha}}{t}\right\},
\end{align}
we have $t_1= t_1^{(p)}+O\!\left(\frac{K^2}{r^2}\Big(\log\frac{1}{\alpha}\Big)^{2/3}\right)$ by a simple algebra.

Therefore, we have 
\begin{align}
    \max\left\{t_1,t_2,t_3\right\}\leq\max\left\{t_1^{(p)},t_2^{(p)},t_3^{(p)}\right\}+O\!\left(\frac{K^2}{r^2}\Big(\log\frac{1}{\alpha}\Big)^{2/3}\right).
\end{align}
Then, since $\sum_{t\geq t_0}\mathbb{P}(E_{t})+\mathbb{P}(E_{t,a^*})+\mathbb{P}(G_{t,1})+\mathbb{P}(G_{t,2})\in O(1)$ and $T_3\in o(\frac{K^2}{r^2}(\log\frac{1}{\alpha})^{2/3})$ as $\alpha$ is small, we have 
\begin{align}
    \mathbb{E}[\tau^{\text{ASN2T}}]\leq\mathbb{E}[\tau^{(p)}]+O\left(\frac{K^2}{r^2}(\log(1/\alpha))^{2/3}\right).
\end{align}
Asymptotically, we have 
\begin{align}
    \lim_{\alpha\rightarrow0}\frac{\mathbb{E}[\tau^{\text{ASN2T}}]}{\log(1/\alpha)}\leq\lim_{\alpha\rightarrow0}\frac{\mathbb{E}[\tau^{(p)}]}{\log(1/\alpha)}.
\end{align}

\subsection{Technical lemmas}\label{a:technical_lemmas}
\begin{lemma}[Ville's Inequality]\label{le:ville_ineq}
    Suppose $\left\{W_t:t\geq0\right\}$ is a nonnegative supermartingale process adapted to a filtration $\left\{F_t:t\geq0\right\}$. Then, we have, for any $a>0$, $\mathbb{P}(\exists t\geq1:W_t\geq a)\leq\frac{\mathbb{E}[W_0]}{a}$.
\end{lemma}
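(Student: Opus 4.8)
The plan is to prove this classical maximal inequality via a hitting time together with the optional stopping theorem. First I would introduce the hitting time $\tau_a := \inf\{t \geq 1 : W_t \geq a\}$, with the convention $\inf \emptyset = \infty$, and observe that the event of interest rewrites exactly as $\{\exists\, t \geq 1 : W_t \geq a\} = \{\tau_a < \infty\}$. Since $\{\tau_a < \infty\} = \bigcup_{n \geq 1} \{\tau_a \leq n\}$ is an increasing union of events, it suffices to bound $\mathbb{P}(\tau_a \leq n)$ uniformly in $n$ and then pass to the limit using continuity of the probability measure from below.

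The key step is to apply the optional stopping theorem to the \emph{truncated} stopping time $\tau_a \wedge n$, which is a bounded stopping time adapted to $\{F_t\}$. Because $\{W_t\}$ is a supermartingale, optional stopping in its elementary bounded-time form (so that no uniform integrability or additional moment hypotheses are required) yields $\mathbb{E}[W_{\tau_a \wedge n}] \leq \mathbb{E}[W_0]$. I would then lower-bound the left-hand side using nonnegativity of the process: on the event $\{\tau_a \leq n\}$ we have $W_{\tau_a \wedge n} = W_{\tau_a} \geq a$ by the very definition of $\tau_a$, whereas on the complementary event $\{\tau_a > n\}$ the contribution $W_{\tau_a \wedge n} = W_n \geq 0$ can only help. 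Hence $\mathbb{E}[W_{\tau_a \wedge n}] \geq a\, \mathbb{P}(\tau_a \leq n)$.

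Combining the two bounds gives $a\, \mathbb{P}(\tau_a \leq n) \leq \mathbb{E}[W_0]$, i.e.\ $\mathbb{P}(\tau_a \leq n) \leq \mathbb{E}[W_0]/a$ for every $n \geq 1$. Finally I would let $n \to \infty$: since $\{\tau_a \leq n\} \uparrow \{\tau_a < \infty\}$, continuity of the measure from below delivers $\mathbb{P}(\exists\, t \geq 1 : W_t \geq a) = \mathbb{P}(\tau_a < \infty) = \lim_{n \to \infty} \mathbb{P}(\tau_a \leq n) \leq \mathbb{E}[W_0]/a$, which is the claimed inequality.

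This is a standard result, so I do not anticipate a genuine obstacle; the only points meriting care are (i) invoking optional stopping solely for the bounded time $\tau_a \wedge n$, which keeps the argument free of any integrability side conditions, and (ii) the closing limit, which is justified by monotone continuity of the probability measure rather than by any convergence of the process $\{W_t\}$ itself. Note also that working in discrete time removes any path-regularity subtleties, so $W_{\tau_a} \geq a$ holds exactly on $\{\tau_a \leq n\}$ without a right-continuity argument.
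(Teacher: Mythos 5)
Your proof is correct and is the standard optional-stopping argument for Ville's inequality: stop at the truncated hitting time $\tau_a \wedge n$, use nonnegativity to lower-bound the stopped expectation by $a\,\mathbb{P}(\tau_a \leq n)$, and pass to the limit by continuity from below. The paper does not prove this lemma at all --- it is stated as a classical cited result (Ville, 1939) in the technical-lemmas appendix --- so there is no alternative argument to compare against; your writeup is the canonical derivation and correctly flags the two points that need care (bounded optional stopping and the monotone limit).
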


\begin{lemma}[Bernstein for martingale difference sequence \cite{dzhaparidze2001}]\label{le:bernstein}
    Let $\left\{X_t, F_t\right\}_{t\ge0}$ be a martingale difference sequence with $X_t\leq a$ almost surely and $\sum_{t=1}^{n}\mathbb{E}[X_t^2|F_{t-1}]\leq v$. Then, $\forall\,\delta>0$,
    \begin{align}
        \mathbb{P}\left(|\sum_{t=1}^{n}X_t|\geq\delta\right)\leq2\exp\left(-\frac{1}{2}\left(\frac{\delta^2}{v+\frac{a\delta}{3}}\right)\right)
    \end{align}
        
\end{lemma}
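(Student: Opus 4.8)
The plan is to prove this martingale analogue of Bernstein's inequality by the Chernoff / exponential-supermartingale method, passing through a Bennett-type moment-generating-function bound and then relaxing to the stated form.

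First I would bound the one-step conditional moment generating function. Fix $\lambda > 0$. Since the map $u \mapsto (e^u - 1 - u)/u^2$ is nondecreasing and $\lambda X_t \leq \lambda a$ almost surely, we have $e^{\lambda X_t} - 1 - \lambda X_t \leq X_t^2 \cdot \frac{e^{\lambda a} - 1 - \lambda a}{(\lambda a)^2}$; taking the conditional expectation and using $\mathbb{E}[X_t \mid F_{t-1}] = 0$ yields
\begin{align}
    \mathbb{E}\!\left[e^{\lambda X_t} \mid F_{t-1}\right] \leq \exp\!\left(c(\lambda)\,\mathbb{E}[X_t^2 \mid F_{t-1}]\right), \qquad c(\lambda) := \frac{e^{\lambda a} - 1 - \lambda a}{(\lambda a)^2}.
\end{align}

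Then I would form the exponential supermartingale. With $S_n = \sum_{t=1}^n X_t$ and $V_n = \sum_{t=1}^n \mathbb{E}[X_t^2 \mid F_{t-1}]$, set $M_n := \exp(\lambda S_n - c(\lambda) V_n)$; the bound above shows $\mathbb{E}[M_n \mid F_{n-1}] \leq M_{n-1}$, so $\{M_n\}$ is a nonnegative supermartingale with $\mathbb{E}[M_n] \leq 1$. Because $V_n \leq v$ almost surely, on the event $\{S_n \geq \delta\}$ we have $M_n \geq e^{\lambda\delta - c(\lambda) v}$, and Markov's inequality gives $\mathbb{P}(S_n \geq \delta) \leq e^{-\lambda\delta + c(\lambda) v}$. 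Next I would relax to the Bernstein form and optimize: using $e^{\lambda a} - 1 - \lambda a \leq \frac{(\lambda a)^2/2}{1 - \lambda a/3}$ for $0 < \lambda < 3/a$, the exponent is dominated by $-\lambda\delta + \frac{\lambda^2 v/2}{1 - \lambda a/3}$, and the explicit choice $\lambda = \delta/(v + a\delta/3)$ (which lies in $(0,3/a)$) makes $1 - a\lambda/3 = v/(v + a\delta/3)$ and collapses the exponent to exactly $-\delta^2/\big(2(v + a\delta/3)\big)$. For the two-sided claim I would rerun the identical argument on $-X_t$ — invoking $|X_t| \leq a$, which holds in all of the paper's applications (e.g.\ $a_s = \mathbbm{1}\{\delta_s = k\} - P(\delta_s = k \mid F_{s-1})$ with $a = 1$) so that $-X_t \leq a$ too — and combine the two tails by a union bound, producing the leading factor $2$.

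The step I expect to be the main obstacle is the first one: securing the conditional moment-generating-function bound with precisely the right constant, since it is the $1 - a\lambda/3$ denominator that propagates through the optimization to yield exactly the $v + a\delta/3$ term in the statement; a coarser sub-Gaussian estimate would produce the wrong denominator and a weaker rate. The supermartingale property, the Chernoff step, and the closed-form choice of $\lambda$ are then all routine. Because the statement is quoted verbatim from \cite{dzhaparidze2001}, in the write-up I would simply cite that reference for the fully rigorous martingale version and use the sketch above only to certify the exact constants in the regime the paper actually uses.
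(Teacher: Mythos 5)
The paper does not prove this lemma at all: it is quoted verbatim as a known result and attributed to \cite{dzhaparidze2001}, so your closing plan --- cite the reference and use the sketch only as a sanity check on the constants --- coincides exactly with what the paper does. Your reconstruction of the proof is the standard Freedman/Bernstein exponential-supermartingale argument and is essentially correct: the monotonicity of $u\mapsto (e^u-1-u)/u^2$, the supermartingale $M_n=\exp(\lambda S_n - c(\lambda)V_n)$, the relaxation $e^{\lambda a}-1-\lambda a\leq \tfrac{(\lambda a)^2/2}{1-\lambda a/3}$, and the choice $\lambda=\delta/(v+a\delta/3)$ do collapse the exponent to $-\delta^2/(2(v+a\delta/3))$ as you claim. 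Two small points. First, there is a factor-of-$\lambda^2$ slip in your first step: monotonicity gives $e^{\lambda X_t}-1-\lambda X_t\leq (\lambda X_t)^2\cdot\frac{e^{\lambda a}-1-\lambda a}{(\lambda a)^2}$, so the correct constant is $c(\lambda)=\frac{e^{\lambda a}-1-\lambda a}{a^2}$ rather than $\frac{e^{\lambda a}-1-\lambda a}{(\lambda a)^2}$; your later exponent $-\lambda\delta+\frac{\lambda^2 v/2}{1-\lambda a/3}$ is the one that follows from the corrected constant, so nothing downstream breaks, but as written the two displays are inconsistent. Second, you are right to flag that the two-sided conclusion $\mathbb{P}(|\sum_t X_t|\geq\delta)\leq 2\exp(\cdot)$ does not follow from the one-sided hypothesis $X_t\leq a$ alone; the lemma as stated in the paper is slightly loose on this point, and your observation that every application in the paper (e.g.\ $a_s=\mathbbm{1}\{\delta_s=k\}-P(\delta_s=k\mid F_{s-1})$, witness-function differences bounded by $1$) satisfies $|X_t|\leq a$ is exactly the right way to certify that the union bound over the two tails is legitimate where the lemma is actually invoked.
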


\begin{lemma}[Deviation of the empirical estimator for data source $a^*$]\label{le:deviation_of_esti_source1}
Assume $\exists M>0$ such that
\begin{align}
    \frac{R^{(a^*)}_t}{N_{a^*,t}}\leq \frac{M}{\sqrt{N_{a^*,t}}}\quad\text{almost surely}.\label{eq:lemma7_a}
\end{align}
Then,
    \begin{align}
        \mathbb{P}_{H_1}\left(D^{(a^*)}_{\mathcal{G}}-\hat{\mu}_{a^*,t}\geq\frac{r}{2}\right)\leq \frac{1}{1-\exp\left\{-\frac{r^2}{512}\right\}}\frac{1}{t^{5}}+\frac{2}{t^{5}}\quad\forall\,t>\max\left\{T_1, T_{2,1}\right\},
    \end{align}
 where $T_1$ are defined in Appendix~\ref{a:proof_of_at_least_logt} with $c_1\geq\frac{2560}{r^2}$ and $T_{2,1}:=\exp(16M^2/(r^2 c_1))$. 
\end{lemma}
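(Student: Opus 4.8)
The plan is to bound $D^{(a^*)}_{\mathcal{G}}-\hat{\mu}_{a^*,t}$ by splitting it into a \emph{regret} term and a \emph{centered martingale-average} term, and then to control each on the high-probability event that $a^*$ has already been sampled $\Omega(\log t)$ times. First I would use the definition of the individual regret with the witness function $g^*_{a^*}$ as comparator: since $R^{(a^*)}_t \ge \sum_{s=1}^t \bigl(v^{(a^*)}_s(g^*_{a^*}) - v^{(a^*)}_s(g_{a^*,s})\bigr)\mathbbm{1}\{\delta_s = a^*\}$, dividing by $N_{a^*,t}$ and setting $Y_s := \bigl(v^{(a^*)}_s(g^*_{a^*}) - D^{(a^*)}_{\mathcal{G}}\bigr)\mathbbm{1}\{\delta_s = a^*\}$ gives the deterministic inequality
\[
  D^{(a^*)}_{\mathcal{G}} - \hat{\mu}_{a^*,t} \;\le\; -\frac{1}{N_{a^*,t}}\sum_{s=1}^t Y_s \;+\; \frac{R^{(a^*)}_t}{N_{a^*,t}}.
\]
Here $\{Y_s\}$ is a martingale difference sequence for $\{F_s\}$, since conditional on $\delta_s=a^*$ the fresh pair satisfies $\mathbb{E}[v^{(a^*)}_s(g^*_{a^*})\mid F_{s-1},\delta_s=a^*] = D^{(a^*)}_{\mathcal{G}}$, and $|Y_s|\le 2$.

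Next I would dispose of the regret term. On the event $\{N_{a^*,t}\ge c_1\log t\}$, hypothesis~\eqref{eq:lemma7_a} gives $R^{(a^*)}_t/N_{a^*,t} \le M/\sqrt{N_{a^*,t}} \le M/\sqrt{c_1\log t}$, which is at most $r/4$ precisely when $t \ge \exp(16M^2/(r^2 c_1)) = T_{2,1}$. Hence for $t > \max\{T_1,T_{2,1}\}$ the event $\{D^{(a^*)}_{\mathcal{G}} - \hat{\mu}_{a^*,t}\ge r/2\}\cap\{N_{a^*,t}\ge c_1\log t\}$ is contained in $\{\sum_{s=1}^t Y_s \le -\tfrac{r}{4}N_{a^*,t}\}$, so by Lemma~\ref{le:at_least_logt}
\[
  \mathbb{P}_{H_1}\!\Bigl(D^{(a^*)}_{\mathcal{G}} - \hat{\mu}_{a^*,t}\ge \tfrac r2\Bigr) \le \frac{2}{t^5} + \mathbb{P}_{H_1}\!\Bigl(\textstyle\sum_{s=1}^t Y_s \le -\tfrac r4 N_{a^*,t},\, N_{a^*,t}\ge c_1\log t\Bigr).
\]

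The core step is the last probability, whose threshold depends on the random count $N_{a^*,t}$; I would handle it by peeling over the value of $N_{a^*,t}$. Writing $T^{(a^*)}_i$ for the time of the $i$-th selection of $a^*$ (finite a.s.\ thanks to the forced exploration) and $S_n := \sum_{i=1}^n \bigl(v^{(a^*)}_{T^{(a^*)}_i}(g^*_{a^*}) - D^{(a^*)}_{\mathcal{G}}\bigr)$, the increments of $S_n$ form a bounded martingale difference sequence with respect to $\{F_{T^{(a^*)}_n}\}$, and $\sum_{s=1}^t Y_s = S_{N_{a^*,t}}$; thus the event above lies in $\bigcup_{n\ge c_1\log t}\{S_n\le -\tfrac r4 n\}$. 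A union bound together with a martingale concentration inequality (Azuma--Hoeffding, or Lemma~\ref{le:bernstein} with a crude variance bound) gives $\mathbb{P}(S_n\le -\tfrac r4 n)\le \exp(-\tfrac{r^2}{512}n)$, and summing the resulting geometric series from $n=\lceil c_1\log t\rceil$ yields
\[
  \sum_{n\ge \lceil c_1\log t\rceil}\!\!\exp\!\Bigl(-\tfrac{r^2}{512}n\Bigr) \le \frac{t^{-r^2 c_1/512}}{1-\exp(-r^2/512)} \le \frac{1}{1-\exp(-r^2/512)}\,\frac{1}{t^5},
\]
where the last inequality uses $c_1\ge 2560/r^2$ so that $r^2 c_1/512\ge 5$. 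Adding the $2/t^5$ term from Lemma~\ref{le:at_least_logt} completes the bound.

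I expect the main obstacle to be the rigorous justification of the peeling step: one must argue that subsampling the martingale $\{Y_s\}$ at the random times $T^{(a^*)}_i$ yields a genuine martingale difference sequence $\{S_n-S_{n-1}\}$, so that a fixed-$n$ concentration bound may be inserted inside the union bound, and that the random-threshold event $\{S_{N_{a^*,t}}\le -\tfrac r4 N_{a^*,t}\}$ is correctly relaxed to $\bigcup_n\{S_n\le -\tfrac r4 n\}$. Tuning the range of $Y_s$ and the split of $r/2$ (regret versus fluctuation) so that the per-$n$ exponent matches $r^2/512$ — thereby aligning $c_1=2560/r^2$ with the target rate $t^{-5}$ — is then routine bookkeeping.
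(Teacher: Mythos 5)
Your proposal is correct and follows essentially the same route as the paper's proof: the identical regret-plus-martingale decomposition via the witness function $g^*_{a^*}$, the same use of Lemma~\ref{le:at_least_logt} to absorb the event $\{N_{a^*,t}<c_1\log t\}$ into the $2/t^5$ term, and the same peeling over the value of $N_{a^*,t}$ combined with Azuma's inequality on the subsampled martingale and a geometric series, with matching constants ($r^2/512$ in the exponent, $c_1\ge 2560/r^2$, and $T_{2,1}=\exp(16M^2/(r^2c_1))$). The only cosmetic difference is that you absorb the $M/\sqrt{N_{a^*,t}}$ term into an $r/4$ budget before invoking concentration, whereas the paper keeps $M\sqrt{m}$ inside the deviation and lower-bounds $(\tfrac r2-\tfrac{M}{\sqrt m})^2\ge r^2/16$ — these are the same estimate.
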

\begin{proof}
    By the definition of the individual regret \eqref{eq:individual_regret} and the definition of $\hat{\mu}_{a^*,t}$, we have 
    \begin{align}
        D^{(a^*)}_{\mathcal{G}}-\hat{\mu}_{a^*,t}&=D^{(a^*)}_{\mathcal{G}}-\sup_{g_{a^*}\in\mathcal{G}}\frac{1}{N_{a^*,t}}\sum_{s=1}^{t}v^{(a^*)}_{s}(g_{a^*})\mathbbm{1}\left\{\delta_s=a^*\right\}+\frac{R^{(a^*)}_t}{N_{a^*,t}}\\
        &\leq D^{(a^*)}_{\mathcal{G}}- \frac{1}{N_{a^*,t}}\sum_{s=1}^{t}v^{(a^*)}_{s}(g^*_{a^*})\mathbbm{1}\left\{\delta_s=a^*\right\}+\frac{R^{(a^*)}_t}{N_{a^*,t}}\label{eq:lemma7_1}\\
        &\leq\frac{1}{N_{a^*,t}}\sum_{j=1}^{N_{a^*,t}}a_j+\frac{M}{\sqrt{N_{a^*,t}}}\label{eq:lemma7_2},
    \end{align}
    where $v^{(a^*)}_s(g^{*}_{a^*})$ and $g^{*}_{a^*}$  are relatively defined in \eqref{eq:v} and \eqref{eq:optimal_pred}, and \eqref{eq:lemma7_1} is from the definition of the supremum, and in \eqref{eq:lemma7_2}, we use the assumption \eqref{eq:lemma7_a} and the following definitions: 
    \begin{align}
        &a_j:=D^{(a^*)}_{\mathcal{G}}-(g^*_{a^*}(X_{a^*,1}(\tilde{T}_j))-g^*_{a^*}(X_{a^*,2}(\tilde{T}_j))),\quad\text{and}\label{eq:a_j_2}\\
        &\tilde{T}_j:=\inf\left\{s\ge1:\sum_{k=1}^{s}\mathbbm{1}\left\{\delta_{k}=a^*\right\}=j\right\}\quad\forall\,j\leq N_{a^*,t}.
    \end{align}
    Then, by \eqref{eq:lemma7_2}, we have
    \begin{align}
        \mathbb{P}_{H_1}\left(D^{(a^*)}_\mathcal{G}-\hat{\mu}_{a^*,t}\geq\frac{r}{2}\right)&\leq\mathbb{P}_{H_1}\left(\frac{1}{N_{a^*,t}}\sum_{j=1}^{N_{a^*,t}}a_j+\frac{M}{\sqrt{N_{a^*,t}}}\geq\frac{r}{2}\right)\\
        &\leq\mathbb{P}_{H_1}\left(\sum_{j=1}^{N_{a^*,t}}a_j\geq N_{a^*,t}\frac{r}{2}-M\sqrt{N_{a^*,t}},\, N_{a^*,t}\geq c_1\log(t)\right)+\mathbb{P}\left( N_{a^*,t}< c_1\log(t)\right)\label{eq:lemma7_3}\\
        &\leq \mathbb{P}_{H_1}\left(\sum_{j=1}^{N_{a^*,t}}a_j\geq N_{a^*,t}\frac{r}{2}-M\sqrt{N_{a^*,t}},\, N_{a^*,t}\geq c_1\log(t)\right) + \frac{2}{t^5}\quad\forall\,t\geq T_1,\label{eq:lemma7_4}
    \end{align}
    where \eqref{eq:lemma7_3} is due to $\{A\}=\{A\cap B\}\cup\{A\cap B^c\}\subseteq\{A\cap B\}\cup \{B^c\}$ and in \eqref{eq:lemma7_4}, we apply Lemma~\ref{le:at_least_logt} with $T_1$ and $c_1$ defined in Appendix~\ref{a:proof_of_at_least_logt}. We will set $c_1$ related to the sub-optimality gap $r$.
    
    For the first term, since $\left\{a_j\right\}_{j\ge1}$ is a martingale difference with respect to $\left\{F_{\tilde{T}_j}\right\}_{j\ge1}$, we apply Azuma's inequality. 
    \begin{align}
        \mathbb{P}_{H_1}\left(\sum_{j=1}^{N_{a^*,t}}a_j\geq N_{a^*,t}\frac{r}{2}-M\sqrt{N_{a^*,t}}, N_{a^*,t}\geq c_1\log(t)\right)&\leq\sum_{m=c_1\log(t)}^{\infty}\mathbb{P}_{H_1}\left(\sum_{j=1}^{m}a_j\geq m\frac{r}{2}-M\sqrt{m}\right)\label{eq:lemma7_5}\\
        &\leq\sum_{m=c_1\log(t)}^{\infty}\exp\left\{\frac{-m(\frac{r}{2}-M/\sqrt{m})^2}{32}\right\}\label{eq:lemma7_6}\\
        &\leq\sum_{m=c_1\log(t)}^{\infty}\exp\left\{\frac{-m(r^2/16)}{32}\right\}\label{eq:lemma7_7}\\
        &=\frac{1}{1-\exp\left\{-\frac{r^2}{512}\right\}}\exp\left\{-\frac{c_1r^2\log(t)}{512}\right\}\\
        &=\frac{1}{1-\exp\left\{-\frac{r^2}{512}\right\}}\frac{1}{t^{\frac{c_1r^2}{512}}},
    \end{align}
    where \eqref{eq:lemma7_5} is from union bound, \eqref{eq:lemma7_6} and \eqref{eq:lemma7_7} are due to Azuma's inequality with $|a_j-a_{j-1}|\leq4$ for all $j$, $\frac{mr}{2}-M\sqrt{m}>0$ and $(\frac{r}{2}-\frac{M}{\sqrt{m}})^2\geq r^2/16$ if we set $t>T_{2,1}:=\exp(16M^2/(r^2 c_1))$, i.e., $m\geq \frac{16M^2}{r^2}$. 

    Hence, if we set $c_1\geq\frac{2560}{r^2}$, then 
    \begin{align}
        \mathbb{P}_{H_1}\left(D^{(a^*)}_{\mathcal{G}}-\hat{\mu}_{a^*,t}\geq\frac{r}{2}\right)\leq \frac{1}{1-\exp\left\{-\frac{r^2}{512}\right\}}\frac{1}{t^{5}}+\frac{2}{t^{5}}\quad\forall\,t>\max\left\{T_1, T_{2,1}\right\},
    \end{align}
    Note that $M$ is bounded by the definition of $\mathcal{G}$ referred to \cite[Appendix A.4]{shekhar2025nonparametrictwosampletestingbetting}, which is finite.
\end{proof}

\begin{lemma}[Deviation of the empirical estimator for sub-optimal data source]\label{le:deviation_of_esti_source2}
For any $k\neq a^*$,
    \begin{align}
        \mathbb{P}_{H_1}\left(\hat{\mu}_{k,t}-D^{(k)}_\mathcal{G}\geq\frac{r}{2}\right)\leq \frac{1}{1-\exp(-r^2/(32))}\cdot\frac{1}{t^{5}}+\frac{2}{t^{5}}\quad\forall\,t>T_1
    \end{align}
 where 
 $T_{1}$ is defined in Appendix~\ref{a:proof_of_at_least_logt} with $c_1\geq\frac{160}{r^2}$.
\end{lemma}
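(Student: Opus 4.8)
The plan is to follow the template of the proof of Lemma~\ref{le:deviation_of_esti_source1} almost verbatim, observing that the sub-optimal case is in fact \emph{simpler}: the deviation is in the opposite (upper) direction, and no regret control on the prediction strategy is needed. The key structural fact is that, because $g_{k,s}$ is $F_{s-1}$-measurable and $D^{(k)}_{\mathcal{G}}=\sup_{g_k\in\mathcal{G}}\mathbb{E}_{H_1}[g_k(X_{k,1})-g_k(X_{k,2})]$ is a supremum over the \emph{entire} class $\mathcal{G}$, we automatically have $\mathbb{E}_{H_1}[v^{(k)}_s(g_{k,s})\mid F_{s-1},\delta_s=k]\leq D^{(k)}_{\mathcal{G}}$. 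Hence the ``bias'' incurred by using the learned predictor $g_{k,s}$ rather than a witness function is \emph{non-positive} and may simply be discarded; this is exactly what removes the $M/\sqrt{N_{k,t}}$ correction (and with it the threshold $T_{2,1}$) present in Lemma~\ref{le:deviation_of_esti_source1}, leaving the single requirement $t>T_1$ in the statement.

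Concretely, I would let $\tilde{T}_j$ denote the time slot at which source $k$ is selected for the $j$-th time, exactly as in Lemma~\ref{le:deviation_of_esti_source1}, and define the centered increments
\begin{align}
    b_j:=v^{(k)}_{\tilde{T}_j}(g_{k,\tilde{T}_j})-\mathbb{E}_{H_1}\!\left[v^{(k)}_{\tilde{T}_j}(g_{k,\tilde{T}_j})\,\middle|\,F_{\tilde{T}_j-1},\delta_{\tilde{T}_j}=k\right].
\end{align}
These form a martingale difference sequence adapted to $\{F_{\tilde{T}_j}\}_{j\ge1}$ with $|b_j|\leq 2$, since $v^{(k)}\in[-1,1]$ and the conditional mean lies in $[-1,1]$. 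Writing $\hat{\mu}_{k,t}=\frac{1}{N_{k,t}}\sum_{j=1}^{N_{k,t}}v^{(k)}_{\tilde{T}_j}(g_{k,\tilde{T}_j})$ and bounding each conditional mean by $D^{(k)}_{\mathcal{G}}$ yields the clean one-sided estimate $\hat{\mu}_{k,t}-D^{(k)}_{\mathcal{G}}\leq \frac{1}{N_{k,t}}\sum_{j=1}^{N_{k,t}}b_j$, the analogue of inequality~\eqref{eq:lemma7_2} but \emph{without} the $M/\sqrt{N_{k,t}}$ term.

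From here the argument parallels Lemma~\ref{le:deviation_of_esti_source1} step for step. I would condition on $\{N_{k,t}\geq c_1\log(t)\}$, paying $2/t^5$ on its complement through Lemma~\ref{le:at_least_logt} for $t>T_1$, and on the event apply a union bound over the deterministic values $m=N_{k,t}\geq c_1\log(t)$ followed by Azuma--Hoeffding to each $m$:
\begin{align}
    \mathbb{P}_{H_1}\!\left(\sum_{j=1}^{m}b_j\geq m\frac{r}{2}\right)\leq\exp\!\left(-\frac{(mr/2)^2}{8m}\right)=\exp\!\left(-\frac{mr^2}{32}\right).
\end{align}
Summing this geometric series gives $\frac{1}{1-\exp(-r^2/32)}\,t^{-c_1 r^2/32}$, and taking $c_1\geq 160/r^2$ forces the exponent to be at least $5$, producing precisely the stated $t^{-5}$ rate together with the prefactor $1/(1-\exp(-r^2/32))$ and the $+2/t^5$ contribution. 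Note the tighter constant $r^2/32$ (versus the coarser one in Lemma~\ref{le:deviation_of_esti_source1}) comes directly from using the sharp bound $|b_j|\leq 2$.

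The main obstacle, as in Lemma~\ref{le:deviation_of_esti_source1}, is handling the \emph{random} number of summands $N_{k,t}$: the increments $b_j$ are indexed along the data-dependent subsequence of selection times, so one must pass to the deterministic union bound over $m$ and verify that, for each fixed $m$, $\{b_1,\dots,b_m\}$ is a genuine martingale difference sequence relative to $\{F_{\tilde{T}_j}\}_{j=1}^{m}$ (this is the step corresponding to~\eqref{eq:lemma7_5}). Once that reduction is in place, the boundedness $|b_j|\leq 2$, the non-positivity of the discarded bias, and the geometric summation are all routine, which is exactly why the sub-optimal case enjoys the lighter requirement $c_1\geq 160/r^2$ and needs no regret assumption.
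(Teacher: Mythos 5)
Your proposal matches the paper's proof essentially step for step: the same decomposition of $\hat{\mu}_{k,t}-D^{(k)}_{\mathcal{G}}$ into a martingale difference part along the selection-time subsequence $\{\tilde{T}_j\}$ plus a bias term that is non-positive because $g_{k,s}$ is $F_{s-1}$-measurable and $D^{(k)}_{\mathcal{G}}$ is a supremum over $\mathcal{G}$, the same conditioning on $\{N_{k,t}\geq c_1\log(t)\}$ via Lemma~\ref{le:at_least_logt}, and the same union bound over $m$ plus Azuma giving $\exp(-mr^2/32)$ and the geometric-series prefactor with $c_1\geq 160/r^2$. Your observation that dropping the non-positive bias is exactly what removes the $M/\sqrt{N_{k,t}}$ correction and the threshold $T_{2,1}$ is precisely the point the paper makes implicitly; the proof is correct.
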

\begin{proof}
    We will apply Azuma's inequality and Lemma~\ref{le:at_least_logt} in this proof. For any $k\neq a^*$, we have
    \begin{align}
        \hat{\mu}_{k,t}-D^{(k)}_{\mathcal{G}}&=\frac{1}{N_{k,t}}\sum_{s=1}^{t}v^{(k)}_s(g_{k,s})\mathbbm{1}\left\{\delta_s=k\right\}-D^{(k)}_{\mathcal{G}}\\
        &=\frac{1}{N_{k,t}}\sum_{s=1}^{t}\left(v^{(k)}_s(g_{k,s})\mathbbm{1}\left\{\delta_s=k\right\}-\mathbb{E}\left[v^{(k)}_s(g_{k,s})|F_{s-1}\right]\mathbbm{1}\left\{\delta_s=k\right\}\right)\\
        &\quad+\frac{1}{N_{k,t}}\sum_{s=1}^{t}\left(\mathbb{E}\left[v^{(k)}_s(g_{k,s})|F_{s-1}\right]\mathbbm{1}\left\{\delta_s=k\right\}-D^{(k)}_{\mathcal{G}}\mathbbm{1}\left\{\delta_s=k\right\}\right)\\
        &=\frac{1}{N_{k,t}}\sum_{j=1}^{N_{k,t}}\left(\tilde{v}^{(k)}_j(g_{k,j})-\mathbb{E}\left[\tilde{v}^{(k)}_j(g_{k,j})|F_{\tilde{T}_j-1}\right]\right)\\
        &\quad+\frac{1}{N_{k,t}}\sum_{j=1}^{N_{k,t}}\left(\mathbb{E}\left[\tilde{v}^{(k)}_j(g_{k,j})|F_{\tilde{T}_j-1}\right]-D^{(k)}_{\mathcal{G}}\right),\label{eq:lemma8_1}
    \end{align}
    where in \eqref{eq:lemma8_1}, we use $\tilde{v}^{(k)}_j(g_{k,j}):=g_{k,\tilde{T}_j}(X_{k,1}(\tilde{T}_j))-g_{k,\tilde{T}_j}(X_{k,2}(\tilde{T}_j))$ and
    $\tilde{T}_j:=\inf\left\{t\ge1:\sum_{s=1}^{t}\mathbbm{1}\left\{\delta_{s}=k\right\}=j\right\}$ for any $j\leq N_{k,t}$ and the expectation is taking over $H_1$.
    
    Then, since $g_{k,s}$ is $F_{s-1}$-measurable for all $s\ge1$ and $(X_{k,1}, X_{k,2})$ is independent of $F_{s-1}$ for any $s\ge1$, we know that 
    \begin{align}
        \mathbb{E}\left[\tilde{v}^{(k)}_j(g_{k,j})|F_{\tilde{T}_j-1}\right]\leq D^{(k)}_{\mathcal{G}}:=\sup_{g_k}\mathbb{E}[g_k(X_{k,1})-g_k(X_{k,2})]\quad\forall\,j\geq1, k\in[K].
    \end{align}
    Therefore, \eqref{eq:lemma8_1} turns to
    \begin{align}
        \hat{\mu}_{k,t}-D^{(k)}_{\mathcal{G}}\leq\frac{1}{N_{k,t}}\sum_{j=1}^{N_{k,t}}\left(\tilde{v}^{(k)}_j(g_{k,j})-\mathbb{E}\left[\tilde{v}^{(k)}_j(g_{k,j})|F_{\tilde{T}_j-1}\right]\right).
    \end{align}
    
    Following this, we denote $a_j:=\tilde{v}^{(k)}_j(g_{k,j})-\mathbb{E}\left[\tilde{v}^{(k)}_j(g_{k,j})|F_{\tilde{T}_j-1}\right]$
    and $G_j:=F_{\tilde{T}_j}$. It is easy to show that $\left\{a_j, G_j\right\}_{j\ge1}$ is a martingale difference by
    \begin{align}
        \mathbb{E}[a_j|G_{j-1}]&=\mathbb{E}[\tilde{v}^{(k)}_j(g_{k,j})|F_{T_{j-1}}]-\mathbb{E}\left[\mathbb{E}\left[\tilde{v}^{(k)}_j(g_{k,j})|F_{\tilde{T}_j-1}\right]|F_{T_{j-1}}\right]\\
        &=\mathbb{E}[\tilde{v}^{(k)}_j(g_{k,j})|F_{T_{j-1}}]-\mathbb{E}[\tilde{v}^{(k)}_j(g_{k,j})|F_{T_{j-1}}]\label{eq:lemma8_2}\\
        &=0,
    \end{align}
    where \eqref{eq:lemma8_2} is due to law of total expectation and $F_{T_{j-1}}\subseteq F_{T_{j}-1}$.
    
    Hence, plugging the definition of $a_j$, we have 
    \begin{align}
        \mathbb{P}_{H_1}\left(\hat{\mu}_{k,t}-D^{(k)}_\mathcal{G}\geq\frac{r}{2}\right)&\leq\mathbb{P}\left(\frac{1}{N_{k,t}}\sum_{j=1}^{N_{k,t}}a_j\geq\frac{r}{2}\right)\\
        &\leq\sum_{m=c_1\log(t)}^{\infty}\mathbb{P}\left(\sum_{j=1}^{m}a_j\geq m\frac{r}{2}\right)+\mathbb{P}(N_{k,t}<c_1\log(t))\label{eq:lemma8_3}\\
        &\leq\sum_{m=c_1\log(t)}^{\infty}\mathbb{P}\left(\sum_{j=1}^{m}a_j\geq m\frac{r}{2}\right)+\frac{2}{t^5}\quad\forall\,t\geq T_1\label{eq:deviation_for_source2}
    \end{align}
    where \eqref{eq:lemma8_3} and \eqref{eq:deviation_for_source2} are similar to  \eqref{eq:lemma7_3} and \eqref{eq:lemma7_4} and $T_1$ and $c_1$ are defined in Appendix~\ref{a:proof_of_at_least_logt}. We will define $c_1$ related to $r$.

    For the first term, we apply Azuma's inequality, which is basically the same as \ref{eq:lemma7_6}. 
    \begin{align}
        \sum_{m=c_1\log(t)}^{\infty}\mathbb{P}\left(\sum_{j=1}^{m}a_j\geq m\frac{r}{2}\right)&\leq\sum_{m=c_1\log(t)}^{\infty}\exp\left(-\frac{mr^2}{32}\right)\\
        &=\frac{1}{1-\exp(-r^2/(32))}\exp\left(-\frac{c_1\log(t)r^2}{32}\right)\\
        &=\frac{1}{1-\exp(-r^2/(32))}\cdot\frac{1}{t^{\frac{c_1r^2}{32}}}\\
        &\leq\frac{1}{1-\exp(-r^2/(32))}\cdot\frac{1}{t^{5}},
    \end{align}
    where the last inequality is true if we set $c_1\geq\frac{160}{r^2}$. Then, we complete the proof.
\end{proof}

\begin{lemma}[The number of mis-ordering is sublinear]\label{le:numer_of_mis-orering_is_sublinear}
For any $k\neq a^*$, $\forall\,t\geq \max\left\{T_{3,1}, T_2+1\right\}$,
\begin{align}
    \mathbb{P}\left(\sum_{s=1}^{t}\mathbbm{1}\left\{\hat{\mu}_{k,s-1}\geq\max_{m\neq k}\hat{\mu}_{m,s-1}\right\}\geq \frac{1}{2}\sqrt{t}-C/K-C\log(t)/K\right)\leq\frac{64K}{t^2}\left(\frac{1}{1-\exp\left\{-\frac{r^2}{512}\right\}}+2\right),
\end{align}
where $T_{3,1}:=\frac{64C^2}{K^2}\left(\log\frac{8C}{K}\right)^2$ and $T_2$ is defined in Appendix~\ref{sec:proof_of_empirical_mean_error}.
\end{lemma}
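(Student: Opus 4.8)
The plan is to show that the mis-ordering count $S_t := \sum_{s=1}^{t}\mathbbm{1}\{\hat{\mu}_{k,s-1}\geq\max_{m\neq k}\hat{\mu}_{m,s-1}\}$ rarely exceeds the threshold $\theta_t := \tfrac12\sqrt{t}-C/K-C\log(t)/K$, by reducing the rare event to the occurrence of a \emph{single} late mis-ordering whose probability is already controlled by Lemma~\ref{le:empirical_mean_error}. First I would record the elementary inclusion that, since $a^*\neq k$, the source $a^*$ is one of the competitors indexed by $m\neq k$, so that for each $s$,
\[
\{\hat{\mu}_{k,s-1}\geq\max_{m\neq k}\hat{\mu}_{m,s-1}\}\subseteq\{\hat{\mu}_{k,s-1}\geq\hat{\mu}_{a^*,s-1}\}\subseteq\{\max_{j\neq a^*}\hat{\mu}_{j,s-1}\geq\hat{\mu}_{a^*,s-1}\}.
\]
Writing $E_s$ for the left-hand event and $A:=\tfrac{1}{1-\exp(-r^2/512)}+2$, Lemma~\ref{le:empirical_mean_error} (together with the bound \eqref{eq:lemma2_4} in its proof) then gives the per-step tail estimate $\mathbb{P}(E_s)\leq \tfrac{KA}{(s-1)^5}$, valid for every index with $s-1>T_2$.

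Next I would run a peeling argument at the cutoff $c:=\tfrac14\sqrt{t}$. Splitting $S_t=\sum_{s\leq c}\mathbbm{1}\{E_s\}+\sum_{s>c}\mathbbm{1}\{E_s\}$ and bounding the first sum deterministically by $c$, the event $\{S_t\geq\theta_t\}$ forces the surviving count to satisfy $\sum_{s>c}\mathbbm{1}\{E_s\}\geq\theta_t-c$. The precise role of $T_{3,1}=\tfrac{64C^2}{K^2}(\log\tfrac{8C}{K})^2$ is to guarantee that $c<\theta_t$ once $t\geq T_{3,1}$, i.e.\ that $\tfrac{C}{K}(1+\log t)\leq\tfrac14\sqrt{t}$ (solving $\tfrac{C}{K}\log t \asymp \tfrac14\sqrt t$ yields exactly this threshold). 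Since the surviving count is a nonnegative integer and $\theta_t-c>0$, at least one late mis-ordering must occur, giving the inclusion $\{S_t\geq\theta_t\}\subseteq\{\exists\,s\in(c,t]:E_s\}$.

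Finally I would apply a union bound together with the tail estimate:
\[
\mathbb{P}(S_t\geq\theta_t)\leq\sum_{s=c+1}^{t}\mathbb{P}(E_s)\leq KA\sum_{m\geq c}\frac{1}{m^{5}}\leq KA\int_{c-1}^{\infty}\frac{dx}{x^{5}}=\frac{KA}{4(c-1)^{4}}.
\]
With $c=\tfrac14\sqrt{t}$ one has $\tfrac{1}{4(c-1)^4}\leq \tfrac{64}{t^2}$, so the right-hand side becomes $\tfrac{64K}{t^2}\big(\tfrac{1}{1-\exp(-r^2/512)}+2\big)$, which is exactly the claimed bound; the factor $64$ is produced precisely by the choice $c=\tfrac14\sqrt{t}$.

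The main obstacle is the simultaneous compatibility of the two constraints on the cutoff: $c$ must sit \emph{below} $\theta_t$ (ensured by $t\geq T_{3,1}$) and \emph{above} $T_2$ so that the polynomial tail bound from Lemma~\ref{le:empirical_mean_error} applies to every surviving index $s>c$. Verifying that the displayed thresholds $T_{3,1}$ and $T_2+1$ together place $c=\tfrac14\sqrt{t}$ inside the admissible window $(T_2,\theta_t)$ is the delicate bookkeeping step, since $T_2\in O(C^3)$ is substantially larger than $T_{3,1}$; care is needed here to confirm that for the stated range of $t$ the window is nonempty (otherwise the cutoff must be enlarged accordingly). Once the window is secured, the union bound and the $m^{-5}$ decay make the remaining estimate routine.
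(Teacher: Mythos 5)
Your argument is essentially the paper's own proof: the paper packages the ``at least one late mis-ordering'' step as the event $\{\tilde{\tau}>\tfrac14\sqrt{t}\}$ for the stopping time $\tilde{\tau}$ after which no mis-ordering occurs, but the decomposition at the cutoff $\tfrac14\sqrt{t}$, the role of $T_{3,1}$ in forcing $\tfrac14\sqrt{t}$ below the threshold $\tfrac12\sqrt{t}-C/K-C\log(t)/K$, and the union bound over $m\geq\tfrac14\sqrt{t}$ using the $O(K/m^5)$ tail from Lemma~\ref{le:empirical_mean_error} are all identical in substance. The bookkeeping worry you raise at the end --- that the per-step tail bound requires $\tfrac14\sqrt{t}>T_2$, which the stated condition $t\geq T_2+1$ does not by itself guarantee --- is legitimate, but it is an imprecision present in (indeed inherited from) the paper's own proof rather than a defect of your approach.
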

\begin{proof}
    We define $\tilde{\tau}:=\inf\left\{t\geq1:\hat{\mu}_{k,s-1}<\max_{m\neq k}\hat{\mu}_{m,s-1},\,\forall\,s\geq t\right\}$ and a high probability event $\left\{\tilde{\tau}\leq \frac{1}{4}\sqrt{t}\right\}$. Then, we have 
    \begin{align}
        &\mathbb{P}\left(\sum_{s=1}^{t}\mathbbm{1}\left\{\hat{\mu}_{k,s-1}\geq\max_{m\neq k}\hat{\mu}_{m,s-1}\right\}\geq\frac{1}{2}\sqrt{t}-C/K-C\log(t)/K\right)\\
        &\leq\mathbb{P}\left(\sum_{s=1}^{t}\mathbbm{1}\left\{\hat{\mu}_{k,s-1}\geq\max_{m\neq k}\hat{\mu}_{m,s-1}\right\}\geq\frac{1}{2}\sqrt{t}-C/K-C\log(t)/K, \tilde{\tau}\leq\frac{1}{4}\sqrt{t}\right)+\mathbb{P}\left(\tilde{\tau}>\frac{1}{4}\sqrt{t}\right)\label{eq:lemma9_1}.
    \end{align}
    where \eqref{eq:lemma9_1} is similar to \eqref{eq:lemma7_3}.
    For the first term, under the event $\left\{\tilde{\tau}\leq\frac{1}{4}\sqrt{t}\right\}$, we have 
    \begin{align}
        \sum_{s=1}^{t}\mathbbm{1}\left\{\hat{\mu}_{k,s-1}\geq\max_{m\neq k}\hat{\mu}_{m,s-1}\right\}=\sum_{s=1}^{\tilde{\tau}}\mathbbm{1}\left\{\hat{\mu}_{k,s-1}\geq\max_{m\neq k}\hat{\mu}_{m,s-1}\right\}\leq\frac{1}{4}\sqrt{t}
    \end{align}
    since after $s\ge\tilde{\tau}+1$, $\mathbbm{1}\left\{\hat{\mu}_{k,s-1}\geq\max_{m\neq k}\hat{\mu}_{m,s-1}\right\}=0$. Hence, for any $t\geq T_{3,1}:=\frac{64C^2}{K^2}\left(\log\frac{8C}{K}\right)^2$, we have
    \begin{align}
        \frac{1}{4}\sqrt{t}<\frac{1}{2}\sqrt{t}-C/K-C\log(t)/K,
    \end{align}
    leading to zero for the first term in \eqref{eq:lemma9_1}.
    
    For the second term in \eqref{eq:lemma9_1}, we have 
    \begin{align}
        \mathbb{P}\left(\tilde{\tau}>\frac{1}{4}\sqrt{t}\right)=\sum_{m=\frac{1}{4}\sqrt{t}+1}^{\infty}\mathbb{P}(\tilde{\tau}=m)\leq\sum_{m=\frac{1}{4}\sqrt{t}}^{\infty}\mathbb{P}(\hat{\mu}_{k,m-1}\geq\max_{a\neq k}\hat{\mu}_{a,m-1}).
    \end{align}
    where the last inequality is due to
    \begin{align}
        \left\{\tilde{\tau}=m\right\}=\left\{\exists s< m-2, \hat{\mu}_{k,s-1}\geq\max_{m\neq k}\hat{\mu}_{m,s-1}\right\}\cap\left\{\hat{\mu}_{k,m-2}\geq\max_{a\neq k}\hat{\mu}_{a,m-2}\right\}\cap\left\{\forall\,s\geq m, \hat{\mu}_{k,s-1}<\max_{m\neq k}\hat{\mu}_{m,s-1}\right\},
    \end{align}
    and $P(A\cap B\cap C)\leq P(B)$.
    Then, by Appendix~\ref{sec:proof_of_empirical_mean_error}, we have, for all $t\geq T_2+1$,
    \begin{align}
        \mathbb{P}\left(\tilde{\tau}>\frac{1}{4}\sqrt{t}\right)&\leq\sum_{m=\frac{1}{4}\sqrt{t}}^{\infty}\frac{K}{1-\exp\left\{-\frac{r^2}{512}\right\}}\frac{1}{m^{5}}+\frac{2K}{m^{5}}\\
        &\leq\frac{64K}{t^2}\left(\frac{1}{1-\exp\left\{-\frac{r^2}{512}\right\}}+2\right)\label{eq:lemma9_2}.
    \end{align}
    where \eqref{eq:lemma9_2} is due to $\frac{1}{4}\sqrt{t}\gg1$. Proof complete.
    
\end{proof}

\begin{lemma}[Upper bound of $\mathbb{P}(D_{t,2})$]\label{le:upper_bound_of_p(d_n2)}
For all $t\geq \max\left\{68, t_2\right\}$, where 
\begin{align}
    t_2:=\inf\left\{t\ge 1:\frac{D^{(a^*)}_\mathcal{G}}{2}\geq\frac{48}{t}\log\frac{t}{\alpha}+\frac{K-1}{\sqrt{t}}+r_t+\sigma_{a^*}\sqrt{\frac{4\log(t)}{t}}+\frac{2\log(t)}{3t}\right\},
\end{align}
we have
\begin{align}
    \mathbb{P}(D_{t,2})\leq\mathbb{P}(E^c_{t})+\mathbb{P}(E^c_{t,a^*})+\mathbb{P}(G_{t,1}^c),
\end{align}
where $E_{t}:=\left\{\cap_{k\neq a^*}\left\{N_{k,t}\leq\sqrt{t}\right\}\right\}$, $E_{t,a^*}$ is defined in Theorem~\ref{thm:main_thm}, and $G_{t,1}$ is defined in Lemma~\ref{le:technical_lemma_for_stopping_time}.
\end{lemma}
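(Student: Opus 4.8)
The plan is to prove this by the standard ``good-event'' decomposition. The event $D_{t,2}$ can occur only if one of the favourable events $E_t$, $E_{t,a^*}$, $G_{t,1}$ fails, or if $D_{t,2}$ holds simultaneously with all three. Hence, by a union bound,
\[
\mathbb{P}(D_{t,2}) \le \mathbb{P}\big(D_{t,2}\cap E_t\cap E_{t,a^*}\cap G_{t,1}\big) + \mathbb{P}(E_t^c) + \mathbb{P}(E_{t,a^*}^c) + \mathbb{P}(G_{t,1}^c),
\]
and it suffices to show that the first term is exactly zero for every $t\ge\max\{68,t_2\}$, i.e. that $D_{t,2}\cap E_t\cap E_{t,a^*}\cap G_{t,1}=\emptyset$. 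Intuitively, once the sub-optimal sources have been played at most $\sqrt t$ times ($E_t$), the predictor on $a^*$ has small regret ($E_{t,a^*}$), and the witness-function martingale concentrates ($G_{t,1}$), the average increment $\tfrac1t\sum_{s=1}^t v^{(\delta_s)}_s(g_{\delta_s,s})$ is forced above $\tfrac{48}{t}\log\tfrac t\alpha$, so the complement of $D_{t,2}$ holds deterministically.

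To establish this deterministic inequality I would chain four reductions that mirror the manipulations already used in the power-one proof \eqref{eq:power_one_2}--\eqref{eq:power_one_6}. First, split the sum into the $a^*$-contribution and the sub-optimal contributions; since $|v^{(k)}_s(g)|\le 1$, on $E_t$ the latter is at least $-\tfrac1t\sum_{k\neq a^*}N_{k,t}\ge -(K-1)/\sqrt t$. Second, apply the definition of the individual regret \eqref{eq:individual_regret} to replace $g_{a^*,s}$ by the witness function $g^*_{a^*}$ at a cost of $R^{(a^*)}_t/t$, which on $E_{t,a^*}$ is at most $r_t$. Third, invoke $G_{t,1}$, the Bernstein-type one-sided deviation for the centred martingale $M_t:=\sum_s\big(v^{(a^*)}_s(g^*_{a^*})-D^{(a^*)}_{\mathcal{G}}\big)\mathbbm{1}\{\delta_s=a^*\}$ from Lemma~\ref{le:technical_lemma_for_stopping_time}, to bound $M_t/t$ below by $-\sigma_{a^*}\sqrt{4\log(t)/t}-\tfrac{2\log t}{3t}$, leaving the drift $D^{(a^*)}_{\mathcal{G}}N_{a^*,t}/t$. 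Fourth, use $E_t$ once more together with $D^{(a^*)}_{\mathcal{G}}\le 1$ and $N_{a^*,t}=t-\sum_{k\neq a^*}N_{k,t}\ge t-(K-1)\sqrt t$ to replace $D^{(a^*)}_{\mathcal{G}}N_{a^*,t}/t$ by $D^{(a^*)}_{\mathcal{G}}$ up to an $O((K-1)/\sqrt t)$ error. Collecting the terms yields
\[
\tfrac1t\sum_{s=1}^t v^{(\delta_s)}_s(g_{\delta_s,s}) \;\ge\; D^{(a^*)}_{\mathcal{G}} - \Big(\tfrac{K-1}{\sqrt t}+r_t+\sigma_{a^*}\sqrt{\tfrac{4\log t}{t}}+\tfrac{2\log t}{3t}\Big).
\]

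The definition of $t_2$ then closes the argument through a ``$D^{(a^*)}_{\mathcal{G}}/2$ split'': the defining inequality $\tfrac{D^{(a^*)}_{\mathcal{G}}}{2}\ge\tfrac{48}{t}\log\tfrac t\alpha+\tfrac{K-1}{\sqrt t}+r_t+\sigma_{a^*}\sqrt{4\log(t)/t}+\tfrac{2\log t}{3t}$ simultaneously forces the bracketed error above to be at most $D^{(a^*)}_{\mathcal{G}}/2$ and forces $D^{(a^*)}_{\mathcal{G}}/2\ge\tfrac{48}{t}\log\tfrac t\alpha$. Consequently the right-hand side is at least $D^{(a^*)}_{\mathcal{G}}-D^{(a^*)}_{\mathcal{G}}/2=D^{(a^*)}_{\mathcal{G}}/2\ge\tfrac{48}{t}\log\tfrac t\alpha$, which directly contradicts $D_{t,2}$; this is exactly why the threshold in $t_2$ carries the factor $\tfrac12$. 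The absolute threshold $68$ simply places $t$ in the regime where the concentration bound underlying $G_{t,1}$ holds in its stated form.

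The routine part is the algebra of assembling the four error contributions and matching them against $t_2$. The main obstacle I anticipate is cleanly bookkeeping the $(K-1)/\sqrt t$ terms: both the sub-optimal-source contribution (step one) and the deficit $t-N_{a^*,t}$ in the drift term (step four) generate corrections of this order, and one must verify via $D^{(a^*)}_{\mathcal{G}}\le 1$ and $E_t$ that their combination is controlled by the $(K-1)/\sqrt t$ budget in $t_2$ (at worst up to an absolute constant, which the ambient $O(\cdot)$ bounds in Theorem~\ref{thm:main_thm} absorb). The second delicate point is that $G_{t,1}$ is borrowed from the not-yet-proved Lemma~\ref{le:technical_lemma_for_stopping_time}; I would make sure its statement is precisely the one-sided deviation for $M_t$ with variance proxy $\sigma_{a^*}^2$ and range bound $1$, so that steps three and four are legitimate and the resulting error matches $\sigma_{a^*}\sqrt{4\log(t)/t}+\tfrac{2\log t}{3t}$ term-for-term.
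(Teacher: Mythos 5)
Your proposal is correct and follows essentially the same route as the paper: the identical good-event decomposition over $E_t$, $E_{t,a^*}$, $G_{t,1}$, the same chain of reductions (split off the sub-optimal sources via $|v^{(k)}_s(g)|\le 1$ on $E_t$, pay $r_t$ via the regret on $E_{t,a^*}$, pass to the witness function, invoke $G_{t,1}$), and the same $D^{(a^*)}_{\mathcal{G}}/2$ split against the definition of $t_2$. The only substantive difference is your explicit fourth step converting the drift $D^{(a^*)}_{\mathcal{G}}N_{a^*,t}/t$ back to $D^{(a^*)}_{\mathcal{G}}$ at an extra $O((K-1)/\sqrt t)$ cost — the paper instead formulates $G_{t,1}$ directly against $D^{(a^*)}_{\mathcal{G}}$ (and uses $\tilde\sigma_{a^*}$ there, converting to $\sigma_{a^*}$ via Lemma~\ref{le:basic_ineq} afterwards) — which is arguably more careful on your part and only perturbs $t_2$ by a constant factor.
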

\begin{proof}
    For notational simplicity, we define $v_{k,s}:=v^{(k)}_{s}(g_{k,s})$ for any $k\in[K]$, $s\ge1$, $g_{k,s}\in\mathcal{G}$ in this proof. Recall $D_{t,2}:=\left\{\frac{1}{t}\sum_{s=1}^{t}v_{\delta_s,s}<\frac{48}{t}\log\frac{t}{\alpha}\right\}$. Define the high probability events $E_{t}:=\left\{\cap_{k\neq a^*}\left\{N_{k,t}\leq\sqrt{t}\right\}\right\}$. We have $\mathbb{P}(D_{t,2})\leq\mathbb{P}(D_{t,2}\cap E_{t}\cap E_{t,a^*})+\mathbb{P}(E_{t}^c)+\mathbb{P}(E_{t,a^*}^c)$ by a simple set operation. Then, we only need to deal with the first term since the second term is summable by Lemma~\ref{le:sub-opt_source_is_drawn_at_most_n0.5} and the third term is also summable by the assumption in Theorem~\ref{thm:main_thm}. 
    
    For the first term $\mathbb{P}(D_{t,2}\cap E_{t}\cap E_{t,a^*})$, we do the similar steps in \eqref{eq:power_one_2} to \eqref{eq:power_one_5}, by $|v_{k,t}|\leq1$ for all $k\in[K]$,
    \begin{align}
        (D_{t,2}\cap E_{t}\cap E_{t,a^*})&=\left\{\frac{1}{t}\sum_{s=1}^{t}v_{a^*,s}\mathbbm{1}\left\{\delta_s=a^*\right\}+\frac{1}{t}\sum_{k\neq a^*}\sum_{s=1}^{t}v_{k,s}\mathbbm{1}\left\{\delta_s=k\right\}<\frac{48}{t}\log\frac{t}{\alpha}\right\}\cap E_{t}\cap E_{t,a^*}\\
        &\subseteq\left\{\frac{1}{t}\sum_{s=1}^{t}v_{a^*,s}\mathbbm{1}\left\{\delta_s=a^*\right\}-\sum_{k\neq a^*}\frac{N_{k,t}}{t}<\frac{48}{t}\log\frac{t}{\alpha}\right\}\cap E_{t}\cap E_{t,a^*}\\
        &\subseteq\left\{\frac{1}{t}\sum_{s=1}^{t}v_{a^*,s}\mathbbm{1}\left\{\delta_s=a^*\right\}<\frac{48}{t}\log\frac{t}{\alpha}+\frac{K-1}{\sqrt{t}}\right\}\cap E_{t,a^*}\label{eq:bound_of_d_n2_1}\\
        &=\left\{\sup_{g_{a^*}}\frac{1}{t}\sum_{s=1}^{t}g_{a^*}(X_{a^*,1}(s))-g_{a^*}(X_{a^*,2}(s))\mathbbm{1}\left\{\delta_s=a^*\right\}<\frac{48}{t}\log\frac{t}{\alpha}+\frac{K-1}{\sqrt{t}}+r_{t}\right\}\\
        &\subseteq\left\{\frac{1}{t}\sum_{s=1}^{t}g^*_{a^*}(X_{a^*,1}(s))-g^*_{a^*}(X_{a^*,2}(s))\mathbbm{1}\left\{\delta_s=a^*\right\}<\frac{48}{t}\log\frac{t}{\alpha}+\frac{K-1}{\sqrt{t}}+r_t\right\},
    \end{align}

    Then, by Lemma~\ref{le:technical_lemma_for_stopping_time}, with
    \begin{align}
        G_{t,1}:=\left\{\frac{1}{t}\sum_{s=1}^{t}g^*_{a^*}(X_{a^*,1}(s))-g^*_{a^*}(X_{a^*,2}(s))\mathbbm{1}\left\{\delta_s=a^*\right\}>D^{(a^*)}_{\mathcal{G}}-\tilde{\sigma}_{a^*}\sqrt{\frac{4\log(t)}{t}}-\frac{2\log(t)}{3t}\right\},
    \end{align}
    we know $\mathbb{P}(G^c_{t,1})\leq\frac{1}{t^2}$ and further get
    \begin{align}
    (D_{t,2}\cap E_{t}\cap E_{t,a^*})
    &\subseteq
    \left(\left\{\frac{1}{t}\sum_{s=1}^{t}g^*_{a^*}(X_{a^*,1}(s))-g^*_{a^*}(X_{a^*,2}(s))\mathbbm{1}\left\{\delta_s=a^*\right\}
    <\frac{48}{t}\log\frac{t}{\alpha}+\frac{K-1}{\sqrt{t}}+r_t\right\}\cap G_{t,1}\right)\cup G_{t,1}^c\\
    &\subseteq\left\{D^{(a^*)}_{\mathcal{G}}<\frac{48}{t}\log\frac{t}{\alpha}+\frac{K-1}{\sqrt{t}}+r_t\tilde{\sigma}_{a^*}\sqrt{\frac{4\log(t)}{t}}+\frac{2\log(t)}{3t}\right\}\cup G_{t,1}^c.
    \end{align}
    Consequently, if
    \[
    t\geq \inf\left\{t\ge 1:\;
    D^{(a^*)}_\mathcal{G}\geq
    \frac{48}{t}\log\frac{t}{\alpha}+\frac{K-1}{\sqrt{t}}+r_t
    +\tilde{\sigma}_{a^*}\sqrt{\frac{4\log(t)}{t}}+\frac{2\log(t)}{3t}\right\},
    \]
    we get
    \begin{align}
    \mathbb{P}(D_{t,2})\leq\mathbb{P}(E^c_{t})+\mathbb{P}(E^c_{t,2})+\mathbb{P}(G_{t,1}^c).
    \end{align}
    In addition, we can further apply Lemma~\ref{le:basic_ineq} and follow the same argument (simple algebra) in \cite[Appendix C.4]{shekhar2025nonparametrictwosampletestingbetting}, to get if $t\geq \max\left\{68, t_2\right\}$, where 
    \begin{align}
    t_2:=\inf\left\{t\ge 1:\;
    \frac{D^{(a^*)}_\mathcal{G}}{2}\geq
    \frac{48}{t}\log\frac{t}{\alpha}+\frac{K-1}{\sqrt{t}}+r_t
    +\sigma_{a^*}\sqrt{\frac{4\log(t)}{t}}+\frac{2\log(t)}{3t}\right\},    
    \end{align}
    then $\mathbb{P}(D_{t,2})\leq\mathbb{P}(E^c_{t})+\mathbb{P}(E^c_{t,2})+\mathbb{P}(G_{t,1}^c)$ also holds true.
\end{proof}

\begin{lemma}[Upper bound of $\mathbb{P}(D_{t,3})$]\label{le:upper_bound_of_p(d_n3)}
For all $t\geq\max\left\{68, t_3\right\}$, where 
\begin{align}
    t_3:=\inf\left\{t\ge1:\frac{D^{(a^*)}_{\mathcal{G}}}{2}\geq\sigma_{a^*}\sqrt{\frac{4\log(t)}{t}}+\frac{2\log(t)}{3t}+r_t+\frac{K-1}{\sqrt{t}}\right\},
\end{align}
we have
\begin{align}
    \mathbb{P}(D_{t,3})\leq\mathbb{P}(E^c_{t})+\mathbb{P}(E^c_{t,a^*})+\mathbb{P}(G_{t,1}^c),
\end{align}
where $E_{t}$ is defined in Lemma~\ref{le:upper_bound_of_p(d_n2)}, $E_{t,a^*}$ is defined in Theorem~\ref{thm:main_thm}, and $G_{t,1}$ is defined in Lemma~\ref{le:technical_lemma_for_stopping_time}.
\end{lemma}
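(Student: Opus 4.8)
The plan is to follow verbatim the structure of the preceding Lemma~\ref{le:upper_bound_of_p(d_n2)}, since $D_{t,3}$ is exactly the event $D_{t,2}$ with its threshold $\frac{48}{t}\log\frac{t}{\alpha}$ replaced by $0$. Because $0<\frac{48}{t}\log\frac{t}{\alpha}$ for all $t\ge1$ (as $\alpha\in(0,1)$), we have the trivial inclusion $D_{t,3}\subseteq D_{t,2}$, which already delivers the bound on the larger range $t\ge\max\{68,t_2\}$. To reach the stated sharper range $t\ge\max\{68,t_3\}$---note that $t_3\le t_2$ because the defining inequality for $t_3$ simply omits the positive term $\frac{48}{t}\log\frac{t}{\alpha}$---I would replay the same chain of set inclusions with that redundant term deleted everywhere.

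Concretely, I would first write $\mathbb{P}(D_{t,3})\le\mathbb{P}(D_{t,3}\cap E_t\cap E_{t,a^*})+\mathbb{P}(E_t^c)+\mathbb{P}(E_{t,a^*}^c)$ by a set operation, so that the last two terms already match two of the three error terms in the claim. For the first term, splitting $\frac1t\sum_{s=1}^t v_{\delta_s,s}$ into the $a^*$-contribution and the sub-optimal contribution and using $|v_{k,s}|\le1$ together with $E_t=\{\cap_{k\neq a^*}\{N_{k,t}\le\sqrt t\}\}$ to lower bound the latter by $-(K-1)/\sqrt t$ yields
\begin{align}
    D_{t,3}\cap E_t\cap E_{t,a^*}\subseteq\Big\{\frac1t\sum_{s=1}^t v_{a^*,s}\mathbbm{1}\{\delta_s=a^*\}<\frac{K-1}{\sqrt t}\Big\}\cap E_{t,a^*}.
\end{align}
Exactly as in the passage culminating in \eqref{eq:bound_of_d_n2_1}, I would then use the individual-regret definition \eqref{eq:individual_regret} with $E_{t,a^*}=\{R^{(a^*)}_t/t\le r_t\}$ to replace $g_{a^*,s}$ by $\sup_{g_{a^*}}$ at the cost of an additive $r_t$, and lower bound that supremum by its value at the witness $g^*_{a^*}$. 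Intersecting with $G_{t,1}$ from Lemma~\ref{le:technical_lemma_for_stopping_time} (whose complement has probability at most $1/t^2$) reduces the surviving event to the deterministic inequality $D^{(a^*)}_{\mathcal{G}}<\frac{K-1}{\sqrt t}+r_t+\tilde\sigma_{a^*}\sqrt{\frac{4\log t}{t}}+\frac{2\log t}{3t}$.

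The only delicate point, inherited directly from the $D_{t,2}$ argument, is the passage from the variance proxy $\tilde\sigma_{a^*}$ that appears naturally in $G_{t,1}$ to the true standard deviation $\sigma_{a^*}$ entering the definition of $t_3$ (together with the accompanying factor $\frac12$ on $D^{(a^*)}_{\mathcal{G}}$); this is handled by Lemma~\ref{le:basic_ineq} and the threshold $t\ge68$, precisely as in Lemma~\ref{le:upper_bound_of_p(d_n2)}, and no new idea is needed beyond dropping the $\frac{48}{t}\log\frac{t}{\alpha}$ term. Once $t\ge\max\{68,t_3\}$ this inequality is violated, so $D_{t,3}\cap E_t\cap E_{t,a^*}\cap G_{t,1}=\varnothing$ and only $\mathbb{P}(G_{t,1}^c)$ survives, giving $\mathbb{P}(D_{t,3})\le\mathbb{P}(E_t^c)+\mathbb{P}(E_{t,a^*}^c)+\mathbb{P}(G_{t,1}^c)$.
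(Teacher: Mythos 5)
Your proposal is correct and follows essentially the same route as the paper: the paper's own proof of this lemma simply invokes the decomposition $\mathbb{P}(D_{t,3})\leq\mathbb{P}(D_{t,3}\cap E_{t}\cap E_{t,a^*}\cap G_{t,1})+\mathbb{P}(E^c_{t})+\mathbb{P}(E^c_{t,a^*})+\mathbb{P}(G_{t,1}^c)$ and replays the chain of inclusions from Lemma~\ref{le:upper_bound_of_p(d_n2)} with the $\frac{48}{t}\log\frac{t}{\alpha}$ term dropped, followed by the same Lemma~\ref{le:basic_ineq} step, which is exactly what you describe. Your preliminary observation that $D_{t,3}\subseteq D_{t,2}$ is a harmless aside that the paper does not make, but the substantive argument is identical.
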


\begin{proof}
    The proof of this bound is similar to Lemma~\ref{le:upper_bound_of_p(d_n2)}. We have 
    \begin{align}
        \mathbb{P}(D_{t,3})\leq\mathbb{P}(D_{t,3}\cap E_{t}\cap E_{t,a^*}\cap G_{t,1})+\mathbb{P}(E^c_{t})+\mathbb{P}(E^c_{t,a^*})+\mathbb{P}(G_{t,1}^c).
    \end{align}
    Then, by a similar argument in Lemma~\ref{le:upper_bound_of_p(d_n2)},
    \begin{align}
       \left\{D_{t,3}\cap E_{t}\cap E_{t,a^*}\cap G_{t,1}\right\}\subseteq\left\{D^{(a^*)}_{\mathcal{G}}<\tilde{\sigma}_{a^*}\sqrt{\frac{4\log(t)}{t}}+\frac{2\log(t)}{3t}+r_t+\frac{K-1}{\sqrt{t}}\right\}.
    \end{align}
    Therefore, we also use Lemma~\ref{le:basic_ineq} and follow similar steps (simple algebra) in \cite[Appendix C.4]{shekhar2025nonparametrictwosampletestingbetting} to get, for all $t\geq \max\left\{68, t_{3}\right\}$, where
    \begin{align}
        t_3:=\inf\left\{t\ge1:\frac{D^{(a^*)}_{\mathcal{G}}}{2}\geq\sigma_{a^*}\sqrt{\frac{4\log(t)}{t}}+\frac{2\log(t)}{3t}+r_t+\frac{K-1}{\sqrt{t}}\right\},
    \end{align}
    $\mathbb{P}(D_{t,3})\leq\mathbb{P}(E^c_{t})+\mathbb{P}(E^c_{t,a^*})+\mathbb{P}(G_{t,1}^c)$.  
\end{proof}

 \begin{remark}
        The major difference in Lemma~\ref{le:upper_bound_of_p(d_n2)} and Lemma~\ref{le:upper_bound_of_p(d_n3)} compared to \cite{shekhar2025nonparametrictwosampletestingbetting} is that we introduce the best data source in the definition of $t_2$. However, we need to pay an exploration cost, $\frac{K-1}{\sqrt{t}}$, but it is a second-order term.
\end{remark}

\begin{lemma}[Upper bound of $\mathbb{P}(D_{t,1})$]\label{le:upper_bound_of_p(d_n1)}
    For all $t\geq \max\left\{t_1, t_4\right\}$, where 
\begin{align}
    &t_1:=\inf\left\{t\ge1:\frac{D^{(a^*)}_{\mathcal{G}}}{2} \ge r_t + 9\sigma_{a^*} \sqrt{\frac{2\log(t/\alpha)}{t}} + 7\sqrt{2\tilde{\sigma}_{a^*}} \left( \frac{\log(t/\alpha)}{t} \right)^{3/4} + \frac{8\log(t/\alpha)}{t}+\frac{K-1}{\sqrt{t}}+7\frac{\sqrt{(K-1)\log(t/\alpha)}}{t^{3/4}}\right\},\\
    &t_{4}:=\inf\left\{t \ge 1 : t/\log(t/\alpha)\ge 392\right\},
\end{align}
the probability of $\mathbb{P}(D_{t,1})$ satisfies
\begin{align}
    \mathbb{P}(D_{t,1})\leq\mathbb{P}(E_{t}^c)+\mathbb{P}(E_{t,a^*}^c)+\mathbb{P}(G^c_{t,2})+\mathbb{P}(G^c_{t,1}),
\end{align}
where $E_{t}$ is defined in Lemma~\ref{le:upper_bound_of_p(d_n2)},  $E_{t,a^*}$ is defined in Theorem~\ref{thm:main_thm}, and $G_{t,1}$ and $G_{t,2}$ are defined in Lemma~\ref{le:technical_lemma_for_stopping_time}.
\end{lemma}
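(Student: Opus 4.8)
The plan is to follow exactly the template established in Lemmas~\ref{le:upper_bound_of_p(d_n2)} and~\ref{le:upper_bound_of_p(d_n3)}: intersect $D_{t,1}$ with the four high-probability events and show the intersection is deterministically empty once $t\geq\max\{t_1,t_4\}$. Concretely, I would first split
\begin{align}
    \mathbb{P}(D_{t,1}) \leq \mathbb{P}\big(D_{t,1}\cap E_t \cap E_{t,a^*}\cap G_{t,1}\cap G_{t,2}\big) + \mathbb{P}(E_t^c) + \mathbb{P}(E_{t,a^*}^c) + \mathbb{P}(G_{t,1}^c) + \mathbb{P}(G_{t,2}^c),
\end{align}
so that it suffices to prove the leading term vanishes for large $t$; the remaining four terms are precisely those in the statement, and are summable by Lemma~\ref{le:sub-opt_source_is_drawn_at_most_n0.5}, the hypothesis of Theorem~\ref{thm:main_thm}, and Lemma~\ref{le:technical_lemma_for_stopping_time}.

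The heart of the argument is to work on the intersection and contradict the defining inequality of $D_{t,1}$ by controlling the empirical first and second moments of $v^{(\delta_s)}_s(g_{\delta_s,s})$ separately. For the first moment I split the sum into the $\delta_s=a^*$ and $\delta_s\neq a^*$ parts; on $E_t$ the latter contributes at most $\frac{K-1}{\sqrt{t}}$ in absolute value since $|v|\leq 1$ and $N_{k,t}\leq\sqrt{t}$, while on $E_{t,a^*}$ (the regret bound $R^{(a^*)}_t/t\le r_t$) together with $G_{t,1}$ the $a^*$ part is bounded below by $D^{(a^*)}_{\mathcal{G}} - r_t - \tilde{\sigma}_{a^*}\sqrt{4\log(t)/t} - 2\log(t)/(3t)$, exactly as in Lemma~\ref{le:upper_bound_of_p(d_n2)}. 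For the second moment I split analogously: the off-optimal part again costs at most $\frac{K-1}{\sqrt{t}}$ (now using $v^2\leq 1$), and $G_{t,2}$ bounds the $a^*$ part by $\tilde{\sigma}^2_{a^*}$ plus a Bernstein-type term of order $\sqrt{\gamma_{a^*}\log(t/\alpha)/t}$.

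Given these two bounds, I would use that for $t\geq t_1$ the first-moment lower bound is strictly positive, so the inequality defining $D_{t,1}$ can be square-rooted to
\begin{align}
    \frac{1}{t}\sum_{s=1}^{t} v^{(\delta_s)}_s(g_{\delta_s,s}) < \sqrt{\frac{48}{t}\log\tfrac{t}{\alpha}\cdot\frac{1}{t}\sum_{s=1}^{t}\big(v^{(\delta_s)}_s(g_{\delta_s,s})\big)^2}.
\end{align}
Substituting the second-moment upper bound and applying subadditivity $\sqrt{a+b+c}\leq\sqrt{a}+\sqrt{b}+\sqrt{c}$ separates the right-hand side into a main term $\sqrt{48}\,\tilde{\sigma}_{a^*}\sqrt{\log(t/\alpha)/t}$, a concentration term of the form $\sqrt{48}\,\gamma_{a^*}^{1/4}\big(\log(t/\alpha)/t\big)^{3/4}$ (the three-quarters power arising because one factor is itself a square-root deviation), and an exploration term $\sqrt{48}\,\sqrt{(K-1)\log(t/\alpha)}/t^{3/4}$. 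Using $\gamma_{a^*}\leq\tilde{\sigma}^2_{a^*}$ and $\sqrt{48}\leq 7\leq 7\sqrt{2}$ turns the middle term into the $7\sqrt{2\tilde{\sigma}_{a^*}}(\log(t/\alpha)/t)^{3/4}$ and the last into $7\sqrt{(K-1)\log(t/\alpha)}/t^{3/4}$, while converting $\tilde{\sigma}_{a^*}$ to the variance proxy $\sigma_{a^*}$ via Lemma~\ref{le:basic_ineq} (as in Lemmas~\ref{le:upper_bound_of_p(d_n2)} and~\ref{le:upper_bound_of_p(d_n3)}) and absorbing the $\log(t)/t$ and $\frac{K-1}{\sqrt{t}}$ pieces reproduces exactly the threshold defining $t_1$. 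Combining with the first-moment lower bound then contradicts $\frac{D^{(a^*)}_{\mathcal{G}}}{2}\ge\cdots$, so the intersection is empty.

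I expect the main obstacle to be the constant bookkeeping in this final square-root manipulation: tracking how the $\tilde{\sigma}_{a^*}\to\sigma_{a^*}$ substitution interacts with the $G_{t,1}$ deviation, matching $\sqrt{48}$ against the displayed factors $9\sqrt{2}$ and $7$, and verifying that the condition $t\geq t_4$ (i.e.\ $t/\log(t/\alpha)\geq 392$) is exactly what keeps the cross terms small and guarantees the positivity needed to square-root the $D_{t,1}$ inequality. These are the ``simple algebra'' steps inherited from~\cite[Appendix C.4]{shekhar2025nonparametrictwosampletestingbetting}, but lining up every constant with the stated $t_1$, and in particular carrying the two additional exploration terms $\frac{K-1}{\sqrt{t}}$ and $7\sqrt{(K-1)\log(t/\alpha)}/t^{3/4}$ that are absent in the passive analysis, is the delicate part.
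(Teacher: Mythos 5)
Your proposal is correct and follows essentially the same route as the paper: intersect $D_{t,1}$ with $E_t$, $E_{t,a^*}$, $G_{t,1}$, $G_{t,2}$, split both the empirical first and second moments into the $a^*$ and sub-optimal parts (paying $\tfrac{K-1}{\sqrt t}$ and $7\sqrt{(K-1)\log(t/\alpha)}/t^{3/4}$ for the latter), and reduce to the deterministic threshold defining $t_1$ via the subadditivity of the square root, Lemma~\ref{le:basic_ineq}, and the constant bookkeeping of \cite[Appendix C.4]{shekhar2025nonparametrictwosampletestingbetting}. The only cosmetic difference is that the paper drops the absolute value via $\{|A|\le l\}\subseteq\{A\le l\}$ rather than invoking positivity of the first moment before square-rooting, but this does not change the argument.
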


\begin{proof}
    For notational simplicity, we define $v_{k,s}:=v^{(k)}_{s}(g_{k,s})$ for any $k\in[K]$, $s\ge1$, $g_{k,s}\in\mathcal{G}$ in this proof. Recall that
    \[
    D_{t,1}:=\left\{\left(\frac{1}{t}\sum_{s=1}^{t}v_{\delta_s,s}\right)^2<\frac{48}{t}\sum_{s=1}^{t}v_{\delta_s,s}^2\cdot\frac{1}{t} \log\Big(\frac{t}{\alpha}\Big)\right\}\cap B_t
    \quad\text{and}\quad
    B_t:=\left\{\Bigl\lvert\sum_{s=1}^{t}v_{\delta_s,s}\Bigr\rvert\leq\sum_{s=1}^{t}v^2_{\delta_s,s}\right\}.
    \]
    By a similar set operation in Lemma~\ref{le:upper_bound_of_p(d_n2)}, we have
    \begin{align}
        \mathbb{P}(D_{t,1})\leq\mathbb{P}(D_{t,1}\cap E_{t}\cap E_{t,a^*})+\mathbb{P}(E_{t}^c)+\mathbb{P}(E_{t,a^*}^c).
    \end{align}
    Moreover,
    \begin{align}
        &\left\{D_{t,1}\cap E_{t}\cap E_{t,a^*}\right\}\\
        &\subseteq\left\{\left(\frac{1}{t}\sum_{s=1}^{t}v_{\delta_s,s}\right)^2<\frac{48}{t}\sum_{s=1}^{t}v_{\delta_s,s}^2\cdot\frac{1}{t} \log\Big(\frac{t}{\alpha}\Big)\right\}\cap E_{t}\cap E_{t,a^*}\\
        &=\left\{|\frac{1}{t}\sum_{s=1}^{t}v_{\delta_s,s}|<7\sqrt{\frac{1}{t}\sum_{s=1}^{t}v^2_{\delta_s,s}}\cdot\sqrt{\frac{1}{t}\log\frac{t}{\alpha}}\right\}\cap E_{t}\cap E_{t,a^*}\label{eq:lemma12_1}\\
        &\subseteq\left\{\frac{1}{t}\sum_{s=1}^{t}v_{\delta_s,s}<7\sqrt{\frac{1}{t}\sum_{s=1}^{t}v^2_{\delta_s,s}}\cdot\sqrt{\frac{1}{t}\log\frac{t}{\alpha}}\right\}\cap E_{t}\cap E_{t,a^*}\label{eq:lemma12_2}\\
        &\subseteq\left\{\frac{1}{t}\sum_{s=1}^{t}v_{a^*,s}\mathbbm{1}\left\{\delta_s=a^*\right\}<\frac{K-1}{\sqrt{t}}+7\sqrt{\frac{1}{t}\sum_{s=1}^{t}v^2_{\delta_s,s}}\cdot\sqrt{\frac{1}{t}\log\frac{t}{\alpha}}\right\}\cap E_{t}\cap E_{t,a^*}\label{eq:bound_of_d_n1_1}\\
        &\subseteq\left\{\frac{1}{t}\sum_{s=1}^{t}v_{a^*,s}\mathbbm{1}\left\{\delta_s=a^*\right\}<\frac{K-1}{\sqrt{t}}+7\sqrt{\frac{1}{t}\sum_{s=1}^{t}v^2_{a^*,s}\mathbbm{1}\left\{\delta_s=a^*\right\}}\cdot\sqrt{\frac{\log\frac{t}{\alpha}}{t}}+7\frac{\sqrt{(K-1)\log\frac{t}{\alpha}}}{t^{3/4}}\right\}\cap E_{t,a^*}\label{eq:bound_of_d_n1_2},
    \end{align}
    where \eqref{eq:lemma12_2} is from $\{|A|\leq l\}\subseteq\{A\leq l\}$, \eqref{eq:bound_of_d_n1_1} follows the argument in \eqref{eq:power_one_3}, and \eqref{eq:bound_of_d_n1_2} is due to $\sqrt{ab}\leq\sqrt{a}+\sqrt{b}$ for non-negative $a,b$ and a similar argument in \eqref{eq:power_one_3}. Then, by a similar argument in Lemma~\ref{le:upper_bound_of_p(d_n2)} to introduce $\{r_t\}_t$, we also have
    \begin{align}
        &\left\{D_{t,1}\cap E_{t}\cap E_{t,a^*}\right\}\\
        &\subseteq\left\{\frac{1}{t}\sum_{s=1}^{t}v^*_{a^*}(s)\mathbbm{1}\left\{\delta_s=a^*\right\}<r_t+\frac{K-1}{\sqrt{t}}+7\sqrt{\frac{1}{t}\sum_{s=1}^{t}v^2_{a^*,s}\mathbbm{1}\left\{\delta_s=a^*\right\}}\cdot\sqrt{\frac{\log\frac{t}{\alpha}}{t}}+7\frac{\sqrt{(K-1)\log\frac{t}{\alpha}}}{t^{3/4}}\right\},
    \end{align}
    where $v^*_{a^*}(s):=g^*_{a^*}(X_{a^*,1}(s))-g^*_{a^*}(X_{a^*,2}(s))\mathbbm{1}\left\{\delta_s=a^*\right\}$.
    
    Following this, from Lemma~\ref{le:technical_lemma_for_stopping_time}, we define
    \[
    G_{t,2}:=\left\{\frac{1}{t}\sum_{s=1}^{t}v^2_{a^*,s}\mathbbm{1}\left\{\delta_s=a^*\right\}\leq\tilde{\sigma}^2_{a^*}+\gamma_{a^*}\sqrt{\frac{4\log(t)}{t}}+\frac{2\log(t)}{3t}\right\}
    \]
    and we can further do a set operation $\left\{D_{t,1}\cap E_{t}\cap E_{t,a^*}\right\}\subseteq\left\{D_{t,1}\cap E_{t}\cap E_{t,a^*}\cap G_{t,2}\right\}\cup G_{t,2}^c$,
    leading to
    \begin{align}
        \sqrt{\frac{1}{t}\sum_{s=1}^{t}v^2_{a^*,s}\mathbbm{1}\left\{\delta_s=a^*\right\}}&\leq\sqrt{\tilde{\sigma}^2_{a^*}+\gamma_{a^*}\sqrt{\frac{4\log(t)}{t}}+\frac{2\log(t)}{3t}}\\
        &\leq\tilde{\sigma}_{a^*}+\sqrt{\gamma_{a^*}}\left(\frac{4\log(t)}{t}\right)^{1/4}+\sqrt{\frac{2\log(t)}{3t}}\label{eq:lemma12_4}\\
        &\leq\tilde{\sigma}_{a^*}+\sqrt{2\gamma_{a^*}}\left(\frac{\log\frac{t}{\alpha}}{t}\right)^{1/4}+\sqrt{\frac{\log \frac{t}{\alpha}}{t}}\\
        &\leq\sigma_{a^*}+D^{(a^*)}_{\mathcal{G}}+\sqrt{2\gamma_{a^*}}\left(\frac{\log\frac{t}{\alpha}}{t}\right)^{1/4}+\sqrt{\frac{\log \frac{t}{\alpha}}{t}}\label{eq:lemma12_5},
    \end{align}
    where \eqref{eq:lemma12_4} holds true by the subadditivity of the square, \eqref{eq:lemma12_5} is due to Lemma~\ref{le:basic_ineq}. 
    
    Then, to get a similar result in Lemma~\ref{le:upper_bound_of_p(d_n2)} and Lemma~\ref{le:upper_bound_of_p(d_n1)}, we further do
    \begin{align}
        \left\{D_{t,1}\cap E_{t}\cap E_{t,a^*}\cap G_{t,2}\right\}\subseteq\{D_{t,1}\cap E_{t}\cap E_{t,a^*}\cap G_{t,2}\cap G_{t,1}\}\cup G_{t,1}^c,
    \end{align}
    hence, we
    can follow the same steps in \cite[Appendix C.4]{shekhar2025nonparametrictwosampletestingbetting} to get, for all
    \[
    t\geq t_{4}:= \inf\left\{m \ge 1 : 7\sqrt{2\log(m/\alpha)}/m \le 1/2\right\}=\inf\left\{m \ge 1 : m/\log(m/\alpha)\ge 392\right\},
    \]
    \begin{align}
        &\left\{D_{t,1}\cap E_{t}\cap E_{t,a^*}\cap G_{t,2}\cap G_{t,1}\right\}\\
        &\subseteq\left\{\frac{D^{(a^*)}_{\mathcal{G}}}{2} < r_t + 9\sigma_{a^*} \sqrt{\frac{2\log(t/\alpha)}{t}} + 7\sqrt{2\tilde{\sigma}_{a^*}} \left( \frac{\log(t/\alpha)}{t} \right)^{3/4} + \frac{8\log(t/\alpha)}{t}+\frac{K-1}{\sqrt{t}}+7\frac{\sqrt{(K-1)\log(t/\alpha)}}{t^{3/4}}\right\}\label{eq:fine_bound_of_n_1}.\\
    \end{align}
    Therefore, we can conclude that for all $t\geq \max\left\{t_4, t_1\right\}$, where 
    \begin{align}
        t_1:=\inf\left\{t\ge1:\frac{D^{(a^*)}_{\mathcal{G}}}{2} \ge r_t + 9\sigma_{a^*} \sqrt{\frac{2\log(t/\alpha)}{t}} + 7\sqrt{2\tilde{\sigma}_{a^*}} \left( \frac{\log(t/\alpha)}{t} \right)^{3/4} + \frac{8\log(t/\alpha)}{t}+\frac{K-1}{\sqrt{t}}+7\frac{\sqrt{(K-1)\log(t/\alpha)}}{t^{3/4}}\right\},
    \end{align}
    we have
    \begin{align}
        \mathbb{P}(D_{t,1})\leq\mathbb{P}(E_{t}^c)+\mathbb{P}(E_{t,a^*}^c)+\mathbb{P}(G^c_{t,2})+\mathbb{P}(G^c_{t,1}).
    \end{align}
    
    \begin{remark}
        The major difference is that we introduce the best data source in the definition of $t_1$. However, we need to pay an exploration cost, $\frac{K-1}{\sqrt{t}}$ and $7\frac{\sqrt{(K-1)\log(t/\alpha)}}{t^{3/4}}$, but they are second-order terms.
    \end{remark}
\end{proof}

\begin{lemma}[{\cite[Lemma 2]{shekhar2025nonparametrictwosampletestingbetting}}]\label{le:basic_ineq}
    The following relations are true:
    \begin{align}
        \tilde{\sigma}^2_{a^*}\leq\sigma^2_{a^*}+(D^{(a^*)}_\mathcal{G})^2,\quad\tilde{\sigma}_{a^*}\leq\sigma_{a^*}+D^{(a^*)}_\mathcal{G},\quad\text{and}\quad\sqrt{\tilde{\sigma}_{a^*}}\leq\sqrt{\sigma}_{a^*}+\sqrt{D^{(a^*)}_\mathcal{G}}.
    \end{align}
    where for each $k\in[K]$, $\tilde{\sigma}^2_k:=\sup_{g_k\in\mathcal{G}}\mathbb{E}[(g_k(X_{k,1})-g_k(X_{k,2}))^2]$ and $\gamma_k=\sup_{g_k\in\mathcal{G}}\text{Var}((g_k(X_{k,1})-g_k(X_{k,2}))^2)$.
\end{lemma}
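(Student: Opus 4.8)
The plan is to derive all three inequalities from a single pointwise bias--variance decomposition, after which the second and third statements follow by elementary subadditivity of the square root. I would first establish the substantive relation $\tilde{\sigma}^2_{a^*} \leq \sigma^2_{a^*} + (D^{(a^*)}_{\mathcal{G}})^2$; the other two are immediate consequences and require no new idea.

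For the first relation, fix an arbitrary test function $g \in \mathcal{G}$ and write $Y := g(X_{a^*,1}) - g(X_{a^*,2})$, where, as in the rest of this section, all moments are taken under $H_1$. The decomposition $\mathbb{E}[Y^2] = \text{Var}(Y) + (\mathbb{E}[Y])^2$ splits the second moment into the two quantities we want to control. By definition of $\sigma_{a^*}$ we have $\text{Var}(Y) \leq \sigma^2_{a^*}$. For the squared-mean term I would invoke Assumption~\ref{a:symmetric_G}: since $-g \in \mathcal{G}$ whenever $g \in \mathcal{G}$, and $\mathbb{E}[(-g)(X_{a^*,1}) - (-g)(X_{a^*,2})] = -\mathbb{E}[Y]$, the set of attainable means $\{\mathbb{E}[Y] : g \in \mathcal{G}\}$ is symmetric about zero. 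Hence the one-sided supremum defining $D^{(a^*)}_{\mathcal{G}}$ coincides with $\sup_{g \in \mathcal{G}} |\mathbb{E}[Y]|$, is nonnegative, and satisfies $|\mathbb{E}[Y]| \leq D^{(a^*)}_{\mathcal{G}}$ for every $g$, so that $(\mathbb{E}[Y])^2 \leq (D^{(a^*)}_{\mathcal{G}})^2$. Combining the two bounds gives $\mathbb{E}[Y^2] \leq \sigma^2_{a^*} + (D^{(a^*)}_{\mathcal{G}})^2$ uniformly in $g$, and taking the supremum over $g \in \mathcal{G}$ on the left yields $\tilde{\sigma}^2_{a^*} \leq \sigma^2_{a^*} + (D^{(a^*)}_{\mathcal{G}})^2$.

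Given the first relation, the second follows by taking square roots and applying $\sqrt{a+b} \leq \sqrt{a} + \sqrt{b}$, valid for $a,b \geq 0$, with $a = \sigma^2_{a^*}$ and $b = (D^{(a^*)}_{\mathcal{G}})^2$; using $\sigma_{a^*}, D^{(a^*)}_{\mathcal{G}} \geq 0$ this gives $\tilde{\sigma}_{a^*} \leq \sigma_{a^*} + D^{(a^*)}_{\mathcal{G}}$. The third relation is obtained in exactly the same way: take square roots of the second relation and apply subadditivity once more, $\sqrt{\tilde{\sigma}_{a^*}} \leq \sqrt{\sigma_{a^*} + D^{(a^*)}_{\mathcal{G}}} \leq \sqrt{\sigma_{a^*}} + \sqrt{D^{(a^*)}_{\mathcal{G}}}$.

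The only delicate point, and the step I would flag as the real content of the lemma, is the bound $(\mathbb{E}[Y])^2 \leq (D^{(a^*)}_{\mathcal{G}})^2$. Without the symmetry Assumption~\ref{a:symmetric_G}, a test function with a large negative mean could make the squared mean exceed $(D^{(a^*)}_{\mathcal{G}})^2$, since $D^{(a^*)}_{\mathcal{G}}$ is a one-sided supremum; symmetry of $\mathcal{G}$ rules this out by pairing each $g$ with $-g$, forcing the one-sided and two-sided suprema to agree. This is the single place the assumption is used, and everything else reduces to the variance decomposition and $\sqrt{a+b}\le\sqrt{a}+\sqrt{b}$.
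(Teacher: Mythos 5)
Your proof is correct. The paper itself offers no proof of this lemma---it simply imports it as Lemma~2 of the cited reference---so there is nothing to compare against beyond noting that your bias--variance decomposition $\mathbb{E}[Y^2]=\mathrm{Var}(Y)+(\mathbb{E}[Y])^2$ followed by two applications of $\sqrt{a+b}\le\sqrt{a}+\sqrt{b}$ is exactly the standard argument that lemma encapsulates. Your observation that the step $(\mathbb{E}[Y])^2\le (D^{(a^*)}_{\mathcal{G}})^2$ genuinely relies on Assumption~\ref{a:symmetric_G} (to convert the one-sided supremum defining $D^{(a^*)}_{\mathcal{G}}$ into a bound on $|\mathbb{E}[Y]|$, and to guarantee $D^{(a^*)}_{\mathcal{G}}\ge 0$) is accurate and is a detail the paper leaves implicit.
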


\begin{lemma}\label{le:technical_lemma_for_stopping_time}
    Let $G_t:=G_{t,1}\cap G_{t,2}$ and 
    \begin{align}
        &G_{t,1}:=\left\{\frac{1}{t}\sum_{s=1}^{t}g^*_{1}(X_{a^*,1}(s))-g^*_{1}(X_{a^*,2}(s))\mathbbm{1}\left\{\delta_s=a^*\right\}\geq D^{(a^*)}_{\mathcal{G}}-\tilde{\sigma}_{a^*}\sqrt{\frac{4\log(t)}{t}}-\frac{2\log(t)}{3t}\right\},\\
        &G_{t,2}:=\left\{\frac{1}{t}\sum_{s=1}^t v^2_{a^*,s}\mathbbm{1}\left\{\delta_s=a^*\right\}\leq\tilde{\sigma}^2_{a^*}+\gamma_{a^*}\sqrt{\frac{4\log(t)}{t}}+\frac{2\log(t)}{3t}\right\}.
    \end{align}
    where $\tilde{\sigma}^2_{a^*}:=\sup_{g_{a^*}\in\mathcal{G}}\mathbb{E}[(g_{a^*}(X_{a^*,1})-g_{a^*}(X_{a^*,2}))^2]$ and $\gamma_{a^*}=\sup_{g_{a^*}\in\mathcal{G}}\text{Var}((g_{a^*}(X_{a^*,1})-g_{a^*}(X_{a^*,2})^2)$. $\mathbb{P}(G_t)\geq 1-\frac{2}{t^2}$.
\end{lemma}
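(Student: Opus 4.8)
The plan is to recognize both $G_{t,1}$ and $G_{t,2}$ as one-sided deviation events for martingales built from the slots in which source $a^*$ is selected, control each tail by a Bernstein-type inequality (Lemma~\ref{le:bernstein}), and combine them by a union bound. Concretely, I would bound $\mathbb{P}(G_{t,1}^c)$ and $\mathbb{P}(G_{t,2}^c)$ each by $1/t^2$, so that $\mathbb{P}(G_t)=\mathbb{P}(G_{t,1}\cap G_{t,2})\ge 1-2/t^2$.

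First I would handle $G_{t,2}$, which is the cleaner of the two because it is an upper-bound event. Define $W_s := \big(v^{(a^*)}_s(g_{a^*,s})\big)^2\mathbbm{1}\{\delta_s=a^*\}-\mathbb{E}\big[\big(v^{(a^*)}_s(g_{a^*,s})\big)^2\mathbbm{1}\{\delta_s=a^*\}\,\big|\,F_{s-1}\big]$. Since $g_{a^*,s}$ is $F_{s-1}$-measurable and, conditionally on $\{\delta_s=a^*\}$ and $F_{s-1}$, the pair $(X_{a^*,1}(s),X_{a^*,2}(s))$ is a fresh draw from $P_{a^*,1}\times P_{a^*,2}$ independent of the past, the conditional mean equals $\mathbb{P}(\delta_s=a^*\mid F_{s-1})\,\mathbb{E}[(g_{a^*,s}(X_{a^*,1})-g_{a^*,s}(X_{a^*,2}))^2]\le\tilde\sigma^2_{a^*}$, uniformly over the selection. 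Thus $\{W_s\}$ is a martingale difference sequence with $|W_s|\le 1$ and conditional variance controlled by $\gamma_{a^*}=\sup_g\mathrm{Var}((g(X_{a^*,1})-g(X_{a^*,2}))^2)$. A one-sided application of Lemma~\ref{le:bernstein} to $\sum_{s=1}^t W_s$ with threshold tuned so the exponent is $2\log t$ then gives $\frac{1}{t}\sum_{s=1}^t \big(v^{(a^*)}_s(g_{a^*,s})\big)^2\mathbbm{1}\{\delta_s=a^*\}\le\tilde\sigma^2_{a^*}+\gamma_{a^*}\sqrt{4\log t/t}+\frac{2\log t}{3t}$ with probability at least $1-1/t^2$; crucially, since we only need an upper bound and the conditional mean is $\le\tilde\sigma^2_{a^*}$ regardless of how often $a^*$ is played, no control on $N_{a^*,t}$ is required here.

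For $G_{t,1}$ I would instead center by the \emph{fixed} witness-function mean. Define $Y_s := \big(g^*_{a^*}(X_{a^*,1}(s))-g^*_{a^*}(X_{a^*,2}(s))-D^{(a^*)}_{\mathcal{G}}\big)\mathbbm{1}\{\delta_s=a^*\}$. Because $g^*_{a^*}$ is deterministic and $\mathbb{E}[g^*_{a^*}(X_{a^*,1})-g^*_{a^*}(X_{a^*,2})]=D^{(a^*)}_{\mathcal{G}}$ by~\eqref{eq:optimal_pred}, the same conditioning argument yields $\mathbb{E}[Y_s\mid F_{s-1}]=0$, so $\{Y_s\}$ is a martingale difference sequence with bounded increments and conditional variance at most $\sigma^2_{a^*}\le\tilde\sigma^2_{a^*}$. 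A one-sided Bernstein bound on $\sum_{s=1}^t Y_s$ then controls the lower deviation of $\frac{1}{t}\sum_{s=1}^t\big(g^*_{a^*}(X_{a^*,1}(s))-g^*_{a^*}(X_{a^*,2}(s))\big)\mathbbm{1}\{\delta_s=a^*\}$ from $D^{(a^*)}_{\mathcal{G}}\,N_{a^*,t}/t$ by $\tilde\sigma_{a^*}\sqrt{4\log t/t}+\frac{2\log t}{3t}$ on an event of probability at least $1-1/t^2$, after which the union bound completes the claim.

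The main obstacle is the last identification: the martingale naturally centers the empirical $a^*$-sum at $D^{(a^*)}_{\mathcal{G}}\,N_{a^*,t}/t$ rather than at $D^{(a^*)}_{\mathcal{G}}$ itself. The two coincide in the single-source template of~\cite{shekhar2025nonparametrictwosampletestingbetting}, where $\delta_s\equiv a^*$ and $N_{a^*,t}=t$; in the active setting the residual $D^{(a^*)}_{\mathcal{G}}(1-N_{a^*,t}/t)$ is of the same order $O(1/\sqrt t)$ as the exploration terms and cannot be absorbed into the Bernstein deviation alone. This is precisely why $G_{t,1}$ is only invoked in Lemmas~\ref{le:upper_bound_of_p(d_n2)}--\ref{le:upper_bound_of_p(d_n1)} jointly with $E_t=\cap_{k\neq a^*}\{N_{k,t}\le\sqrt t\}$, under which $1-N_{a^*,t}/t\le(K-1)/\sqrt t$ and the residual is folded into the $\frac{K-1}{\sqrt t}$ term in the definitions of $t_1,t_2,t_3$. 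A secondary technical point is justifying the martingale-difference property despite $\delta_s$ being a \emph{randomized} function of $F_{s-1}$: one conditions first on the exploration coin and the selection outcome and then on the fresh sample, so that $\mathbbm{1}\{\delta_s=a^*\}$ acts as an $F_{s-1}$-conditionally determined weight while the sample supplies the mean-zero (respectively variance-controlled) fluctuation with the correct conditional mean $D^{(a^*)}_{\mathcal{G}}$.
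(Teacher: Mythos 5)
Your approach is essentially the paper's own: for each of $G_{t,1}$ and $G_{t,2}$ you form the same martingale difference sequences (the paper's $a_s:=(g^*_{a^*}(X_{a^*,1}(s))-g^*_{a^*}(X_{a^*,2}(s))-D^{(a^*)}_{\mathcal{G}})\mathbbm{1}\{\delta_s=a^*\}$ and $\zeta_s:=v^2_{a^*,s}\mathbbm{1}\{\delta_s=a^*\}-\beta_s$ with $\beta_s\le\tilde\sigma^2_{a^*}$), apply the one-sided Bernstein inequality of Lemma~\ref{le:bernstein} with the thresholds $\delta_{1,t}$, $\delta_{2,t}$ tuned to give $1/t^2$ each, and union bound. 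Your remark that $G_{t,2}$ needs no control on $N_{a^*,t}$ because the conditional mean is bounded above by $\tilde\sigma^2_{a^*}$ in the favorable direction is exactly right and matches the paper.

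The one substantive point where you go beyond the paper is the centering issue for $G_{t,1}$, and you are correct that it is a genuine gap --- in the \emph{paper's} proof, not yours. The Bernstein bound controls $\mathbb{P}\bigl(\frac{1}{t}\sum_{s=1}^{t}a_s\le-\delta_{1,t}\bigr)$, i.e.\ the deviation of $\frac{1}{t}\sum_s(g^*_{a^*}(X_{a^*,1}(s))-g^*_{a^*}(X_{a^*,2}(s)))\mathbbm{1}\{\delta_s=a^*\}$ below $D^{(a^*)}_{\mathcal{G}}\,N_{a^*,t}/t$, whereas $G_{t,1}^c$ is the deviation below $D^{(a^*)}_{\mathcal{G}}$, which is the \emph{larger} event since $D^{(a^*)}_{\mathcal{G}}(1-N_{a^*,t}/t)\ge0$. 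The paper writes $\mathbb{P}(G_{t,1}^c)=\mathbb{P}(\frac{1}{t}\sum_s a_s\le-\delta_{1,t})$ as an equality, which does not hold; the lemma as stated should either center $G_{t,1}$ at $D^{(a^*)}_{\mathcal{G}}N_{a^*,t}/t$ or be invoked only jointly with $E_t$ so that the residual is absorbed into the $\frac{K-1}{\sqrt{t}}$ terms of $t_1,t_2,t_3$ --- which, as you observe, is how it is actually used downstream in Lemmas~\ref{le:upper_bound_of_p(d_n2)}--\ref{le:upper_bound_of_p(d_n1)}. So your proposal proves the correct (weakened) statement and explains how the stated one is recovered, which is a more careful treatment than the paper's.
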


\begin{proof}
    The proof is similar to \cite[Lemma 1]{shekhar2025nonparametrictwosampletestingbetting}. For notational simplicity, we define $v_{k,s}:=v^{(k)}_{s}(g_{k,s})$ for any $k\in[K]$, $s\ge1$, $g_{k,s}\in\mathcal{G}$ in this proof.

    Let us first show the upper bound of $\mathbb{P}(G_{t,1}^c)$. We define $a_s:=(g^*_{1}(X_{a^*,1}(s))-g^*_{1}(X_{a^*,2}(s))-D^{(a^*)}_{\mathcal{G}})\mathbbm{1}\left\{\delta_s=a^*\right\}$ and note that $\left\{a_s\right\}_{s\ge1}$ is a bounded martingale difference since $\mathbb{E}[a_s|F_{s-1}]=0$ and $a_s\leq 1$. Furthermore, we have $\sum_{s=1}^{t}\mathbb{E}[a_s^2|F_{s-1}]\leq t\tilde{\sigma}^2_{a^*}$. We can apply Lemma~\ref{le:bernstein} and get
    \begin{align}
        \mathbb{P}(G_{t,1}^c)=\mathbb{P}\left(\frac{1}{t}\sum_{s=1}^{t}a_s\leq-\delta_{1,t}\right)\leq\exp\left(-\frac{1}{2}\frac{t\delta_{1,t}}{\tilde{\sigma}^2_{a^*}+\frac{\delta_{1,t}}{3}}\right)\leq\frac{1}{t^2},
    \end{align}
    where the last inequality follows \cite[Lemma 1]{shekhar2025nonparametrictwosampletestingbetting} and $\delta_{1,t}:=\tilde{\sigma}_{a^*}\sqrt{\frac{4\log(t)}{t}}+\frac{2\log(t)}{3t}$.

    Then, for $\mathbb{P}(G_{t,2}^c)$, we can set $\beta_s:=\mathbb{E}[v_{a^*,s}^2|F_{s-1}]\mathbbm{1}\left\{\delta_s=a^*\right\}$. Then, following the same argument in \cite[Lemma 1]{shekhar2025nonparametrictwosampletestingbetting}, we have 
    \begin{align}
        &\beta_s\leq\tilde{\sigma}^2_{a^*}\quad\text{and}\\
        &\mathbb{E}[(v^2_{a^*,s}\mathbbm{1}\left\{\delta_s=a^*\right\}-\beta_s)^2|F_{s-1}]\leq\gamma^2_{a^*}.
    \end{align}
    Then, by Lemma~\ref{le:bernstein}, we get
    \begin{align}
        \mathbb{P}(G_{t,2}^c)&=\mathbb{P}\left(\frac{1}{t}\sum_{s=1}^t v^2_{a^*,s}\mathbbm{1}\left\{\delta_{s}=a^*\right\}-\tilde{\sigma}^2_{a^*}>\delta_{2,t}\right)\leq\mathbb{P}\left(\frac{1}{t}\sum_{s=1}^t \zeta_s>\delta_{2,t}\right)\leq\frac{1}{t^2}
    \end{align}
    where $\zeta_s:=v^2_{a^*,s}\mathbbm{1}\left\{\delta_{s}=a^*\right\}-\beta_s$ is a bounded martingale difference and $\delta_{2,t}:=\gamma_{a^*}\sqrt{\frac{4\log(t)}{t}}+\frac{2\log(t)}{3t}$.
\end{proof}

\end{document}